\theoremstyle{definition}
\newtheorem{theorem}{Theorem}[subsection]
\newtheorem{definition}[theorem]{Definition}
\newtheorem{proposition}[theorem]{Proposition}
\newtheorem{corollary}[theorem]{Corollary}
\theoremstyle{remark}
\author{Shen Zhang\\ {shen-zha20@mails.tsinghua.edu.cn}}
\title{Stability of Persistent Path Diagrams}
\date{May 18, 2024}
\begin{document}
\setstretch{1.3}
\linespread{1.0}
\maketitle

\begin{abstract}
    In real-world systems, the relationships and connections between components are highly complex. Real systems are often described as networks, where nodes represent objects in the system and edges represent relationships or connections between nodes. With the deepening of research, networks have been endowed with richer structures, such as directed edges, edge weights, and even hyperedges involving multiple nodes.
  
    Persistent homology is an algebraic method for analyzing data. It helps us understand the intrinsic structure and patterns of data by tracking the death and birth of topological features at different scale parameters.The original persistent homology is not suitable for directed networks. However, the introduction of path homology established on digraphs solves this problem. This paper studies complex networks represented as weighted digraphs or edge-weighted path complexes and their persistent path homology. We use the homotopy theory of digraphs and path complexes, along with the interleaving property of persistent modules and bottleneck distance, to prove the stability of persistent path diagram with respect to weighted digraphs or edge-weighted path complexes. Therefore, persistent path homology has practical application value.
\end{abstract}

\section{Introduction}

\subsection{Networks and Their Extensions}
In real-world systems, the relationships and connections among components are often very complex. Networks are important tools for describing complex systems. A network is a graph composed of nodes and edges connecting two nodes, where nodes represent certain objects in the system, and edges represent the relationships or connections between nodes. Initially, networks were simple unweighted undirected graphs, not considering directed or weighted relationships between components. As research on real-world systems deepened, scientists began to endow networks with more complex structures, such as using directions to describe the start and end points of edges, and using weights to describe the strength of edges.

These extensions help better represent complex systems, but \citet{boccaletti2023structure} pointed out that graphs themselves are not sufficient to provide a complete description of complex systems: in graphs, edges only allow connections between two nodes, and there is no structure connecting multiple nodes. Therefore, graphs can only capture pairwise relationships and connections. However, group interactions widely exist in complex systems in the real world. For example, ecological studies show that competition for food and territory often occurs among groups of three or more species \cite{levine2017beyond}; neural dynamics show that behaviors that need to be predicted require interactions among multiple neurons \cite{ganmor2011sparse}; social mechanisms, such as peer pressure or cooperation, also go beyond the concept of binary connections \cite{benson2018simplicial}.

In fact, there has been research involving higher-order relationships among multiple nodes in networks. For example, the majority rule model in opinion dynamics \cite{montes2011majority}, the public goods game in evolutionary game theory \cite{szolnoki2010reward}, and complex contagions \cite{centola2010spread} all consider interactions among multiple objects. However, these models attempt to decompose higher-order relationships into binary relationships, using the language of graph structures to describe higher-order structures, essentially not going beyond graph structures \cite{battiston2020networks}. Considering that describing higher-order structures in systems can enhance our modeling capabilities and help understand and predict their dynamic behaviors, we need to explore methods to define, model, and construct higher-order networks.

\subsection{Topological Data Analysis}

Considering that nodes and edges are 0-simplices and 1-simplices respectively, if we want to extend the graph structure to higher dimensions to describe higher-dimensional structures and relationships, simplicial complexes are a natural choice. Simplicial complexes have rich topological and algebraic structures, and an important achievement in data analysis using algebraic topology is persistent homology proposed by \citet{de2004topological}. The elementary  idea is to consider a family of parameterized simplicial complexes indexed by a scale parameter, and track the creation and disappearance of topological features as the scale parameter changes. A simplicial complex can be roughly considered as a collection of points, edges, triangles, tetrahedra, and higher-dimensional polyhedra, and persistent homology can be roughly considered as tracking the changes in the number of connected components, holes, and voids in simplicial complexes at different scales. Persistent homology can be represented by persistent barcodes or persistence diagrams, where each barcode or point corresponds to a homology class appearing during the change of the scale parameter and represents the scale range where the homology class exists.

For example, consider a finite set of points \(V\) in \(\mathbb{R}^2\), where \(\mathbb{R}^2\) is equipped with the \(L^2\) norm inducing the distance \(d\).

For different distance parameters \(\delta \in \mathbb{R}\), we construct
\begin{align}
    \mathfrak{R}_{V}^{\delta}:=\left\{ \sigma \subseteq V \mid \sigma \neq \varnothing, \max _{x, x^{\prime} \in \sigma} d\left(x, x^{\prime}\right) \leq \delta\right\}
\end{align}
By definition, for \(\delta \le \delta^\prime\), we have \(\mathfrak{R}_{V}^{\delta} \subset \mathfrak{R}_{V}^{\delta^\prime}\). Thus, we obtain a nested sequence of simplicial complexes, which we call the Vietoris–Rips filtration \cite{vietoris1927hoheren}.

We also construct
\begin{align}
    \mathfrak{C}_{V}^{\delta}:=\left\{ \sigma \subseteq V \mid \sigma \neq \varnothing, \cap_{x \in \sigma} B\left(x,\frac{\delta}{2}\right) \neq \emptyset \right\}
\end{align}
By definition, for \(\delta \le \delta^\prime\), we have \(\mathfrak{C}_{V}^{\delta} \subset \mathfrak{C}_{V}^{\delta^\prime}\). Thus, we obtain a nested sequence of simplicial complexes, which we call the Čech filtration \cite{vcech1959topologicke}.

For a positive integer \(p \in \mathbb{Z}_{>0}\), we consider the \(p\)-th simplicial homology group of each simplicial complex in the filtration, thus obtaining a sequence of homology groups. These homology groups can be regarded as modules over the ring of integers, and we call this sequence of modules the persistent module \cite{edelsbrunner2002topological}. Moreover, we can also consider the case where coefficients are taken in a general ring, and by appropriate selection of the ring, we can equivalently represent the persistent module as a persistence diagram \cite{edelsbrunner2008persistent}. It can be proved that the persistence diagrams of the Vietoris–Rips filtration and the Čech filtration are stable with respect to perturbations of edge weights \cite{chazal2014persistence}, making the application of persistent homology in data analysis practically meaningful.

Based on algebraic topology, the method of persistent homology provides a clear theoretical framework for studying data with complex structures and has been widely applied in fields such as image signals, disease spread, materials science, finance, neuroscience, biochemistry, and natural language \cite{otter2017roadmap}.

\subsection{GLMY Path Homology Theory}
The current issue is that traditional persistent homology theory is based on simplicial complexes and simplicial homology, lacking directionality. Can we introduce higher-dimensional structures while preserving directionality? Furthermore, our previous construction of filtrations assumed that data lie in a metric space, which is not always the case or natural. Therefore, let us consider the general case of a finite set of points. In the case of a weighted directed graph, a network refers to a finite set \(X\) and a weight function \(w: X \times X \rightarrow [0,+\infty]\), such that for any \(x, y \in X\), we have \(w(x, y) \geq 0\) and \(w(x, y) = 0 \Longleftrightarrow x = y\). In the case of a weighted undirected graph, we do not distinguish between \(w(x, y)\) and \(w(y, x)\), so we also require \(w(x, y) = w(y, x)\), in which case we also refer to the network as a symmetric network. The graph structure and edge weights in a network can be regarded as analogs of metric spaces and their metrics, but the edge weights in directed graphs do not require the symmetry assumption required by metrics.

Replacing the metric with edge weights, we can extend the filtrations previously considered in metric spaces to a general finite set of points \(V\).

\begin{enumerate}
    \item Consider the Vietoris–Rips filtration
          \begin{align}
              \mathfrak{R}_{V}^{\delta}:=\left\{ \sigma \subseteq V \mid \sigma \neq \varnothing, \max _{x, x^{\prime} \in \sigma} w\left(x, x^{\prime}\right) \leq \delta\right\}
          \end{align}
          Note that the Vietoris–Rips filtration implicitly symmetrizes the weights. More specifically, given a network \(\left(X, w\right)\), if we assign \(\max \left(w\left(x, x^{\prime}\right), w\left(x^{\prime}, x\right)\right)\) to \(w\left(x, x^{\prime}\right), w\left(x^{\prime}, x\right)\) for all \(x, x^{\prime} \in X\), the resulting Vietoris–Rips filtration remains unchanged because a simplex \(\left[x, x^{\prime}\right]\) appears at \(\max \left(w\left(x, x^{\prime}\right), w\left(x^{\prime}, x\right)\right)\), and so on.
    \item Consider the Dowker filtration
          \begin{align}
              \mathfrak{D}^\delta_{V}:=\left\{ \sigma \mid \exists x^\prime \in V \text{ s.t. } w(x_i, x^\prime) \leq \delta \right\}
          \end{align}
          or equivalently
          \begin{align}
              \mathfrak{D}^\delta_{V}:=\left\{ \sigma \mid \exists x^\prime \in V \text{ s.t. } w(x^\prime, x_i) \leq \delta \right\}
          \end{align}
          or equivalently
          \begin{align}
              \mathfrak{D}^\delta_{V}:=\left\{ \sigma \mid \exists x^\prime, x^{\prime \prime} \in V \text{ s.t. } w(x^\prime, x_i) \leq \delta, w(x_i, x^{\prime \prime}) \leq \delta \right\}
          \end{align}
          It can be proved that in both cases we eventually obtain the same persistent diagram, and in the case of symmetric networks, the Dowker filtration and the Čech filtration are consistent\cite{chowdhury2016persistent}.
\end{enumerate}

In each case, we obtain a simplicial complex filtration, to which we can apply the simplicial homology functor and obtain persistent modules, also known as the Vietoris–Rips/Dowker persistent homology method. As \cite{chowdhury2016persistent} ponited out, the Vietoris–Rips method implicitly symmetrizes the weights; on the other hand, the Dowker method also has implicit symmetrization, since both Vietoris–Rips and Dowker methods apply simplicial homology, where simplices with different vertex orders are mapped to the same one-dimensional linear subspace in the chain group. Thus, the Vietoris–Rips/Dowker persistent homology method is insensitive to asymmetry to varying degrees.

Another method for handling the homology of directed graphs is to use Hochschild homology.\cite{hochschild1945cohomology} In fact, paths on directed graphs have a natural product operation, which allows us to define the path algebra of directed graphs, making Hochschild homology a natural and important object.\cite{grigor2012homologies} However, Hochschild homology is trivial for orders greater than 2\cite{happel2006hochschild}, so the idea of constructing persistent homology using this method is not attractive.

To construct persistent homology on weighted directed graphs, this paper uses the GLMY path homology established in a series of papers published between 2012 and 2020 by A. Grigor'yan, Y. Lin, Y. Muranov, and S.T. Yau.\cite{grigor2012homologies,grigor2015cohomology,grigor2019homology,grigor2024analytic,grigor2014graphs,grigor2020path,grigor2014homotopy}

Compared to previously studied concepts of graph homology, the path homology of directed graphs introduced in this paper has many advantages\cite{grigor2012homologies}:
\begin{enumerate}
    \item Path homology in all dimensions may be non-trivial.
    \item The chain complex associated with the path complex has a richer structure than simplicial chain complexes, such as binary hypercubes and many other subgraphs.
    \item Functoriality, for example, directed graph mappings induce group homomorphisms between path homology groups.
\end{enumerate}

\subsection{Structure Arrangement}
In this paper, we first consider networks as directed graphs with edge weights and no loops, also referred to as weighted directed graphs. Given a weighted directed graph, the process of data analysis using persistent path homology consists of the following parts \cite{chaplin2022grounded}:

\begin{enumerate}
    \item Use a filtration map $F: \operatorname{Obj}\left(\mathbf{WDgr}\right) \rightarrow \operatorname{Obj}\left(\left[\mathbf{R}, \mathbf{Dgr}\right]\right)$ to generate a family of directed graphs indexed by scale parameters from the weighted directed graph.
    \item Use the functor $P$ to generate edge-weighted path complexes from the weighted directed graph.
    \item Use the chain complex functor $\Omega_*: \mathbf{PC} \rightarrow \mathbf{Ch}$ to map path complexes to path complex chain complexes and induce chain mappings from weak mappings of path complexes.
    \item Select the $k$-th homology and use the functor $H_{k}: \mathbf{Ch} \rightarrow \mathbf{Vec} $ to generate the $k$-th path homology group from the path complex chain complex.
    \item Calculate the persistence diagram $D:\operatorname{Obj}(\mathbf{Persvec})\to\mathbf{Mult}\left(\overline{\mathbb{R}}^2\right) $ to obtain the multi-point set in the extended $\mathbb{R}$ plane.
\end{enumerate}
\begin{align}
    \mathbf{WDgr}\xrightarrow{F}\left[\mathbf{R}, \mathbf{Dgr}\right]\xrightarrow{\left[\mathbf{R}, P\right]}\left[\mathbf{R}, \mathbf{PC}\right] \xrightarrow{\left[\mathbf{R}, \Omega_*\right]}\left[\mathbf{R}, \mathbf{Ch}\right] \xrightarrow{\left[\mathbf{R}, H_{k}\right]}\left[\mathbf{R}, \mathbf{Vec}\right]\xrightarrow{D} \mathbf{Mult}\left(\overline{\mathbb{R}}^2\right)
\end{align}

In this case, we assign higher-order structures to the directed graph by generating path complexes, produce scale parameters from weights via filtration mappings, and ultimately compute the persistence diagram as a generalized quantity describing the topological features varying with the scale parameters.

In practical applications, we often face situations where part of the network itself has complex higher-order structures and can be viewed as path complexes; additionally, sometimes we need to adjust and filter the higher-order structures of path complexes generated by directed graphs. In these cases, traditional methods may not be directly applicable. Therefore, more generally, we consider path complexes with edge weights. In this case, we can omit the step of generating path complexes from directed graphs and instead consider path complexes directly, yielding:

\begin{align}
    \mathbf{WPC}\xrightarrow{F}\left[\mathbf{R}, \mathbf{PC}\right] \xrightarrow{\left[\mathbf{R}, \Omega_*\right]}\left[\mathbf{R}, \mathbf{Ch}\right] \xrightarrow{\left[\mathbf{R}, H_{k}\right]}\left[\mathbf{R}, \mathbf{Vec}\right]\xrightarrow{D} \mathbf{Mult}\left(\overline{\mathbb{R}}^2\right)
\end{align}

To ensure the value of persistent path homology in practical applications, we naturally hope that the persistence diagram as a general quantity remains stable against noise and perturbations in the input data. Under the setting of weighted complete digraph, the stability of the persistent diagram of path persistence homology with respect to weights has been demonstrated by \cite{chowdhury2018persistent}. However, this condition is often not met in practice. This paper further considers edge-weighted path complexes based on weighted directed complete graphs and provides stability results and proofs for the persistent diagram of path persistence homology with respect to the weights and structure of complex networks. The conditions are relaxed to include directed graph homotopy equivalence or weak homotopy equivalence of path complexes.

We will first introduce the GLMY theory related to path homology\cite{grigor2012homologies} and path homotopy\cite{grigor2019homology, grigor2014homotopy}, followed by the introduction of persistent homology\cite{chazal2016structure} and its combination with path homology\cite{chowdhury2018persistent,chaplin2022grounded}, and finally introduce the metrics of persistence diagrams\cite{chazal2009proximity,bauer2014induced} and prove the stability of persistent path homology. Readers who are already familiar with the fundamental concepts can skip the corresponding sections.

\section{Path Complexes and Path Homology}

\subsection{Path Spaces}
Consider any non-empty finite set $V$, where its elements are referred to as vertices. Let $\mathbb{K}$ be any fixed field.
\begin{definition}[Elementary  Path]
    For a non-negative integer $n$, we call a sequence $\{i_k\}^n_{k=0}$ composed of vertices from $V$ a elementary  $n$-path on the set $V$. There is no requirement for the vertices in the path to be distinct. For $n=-1$, a elementary  $n$-path is an empty set $\emptyset$.
    We also denote a elementary  $n$-path $\{i_k\}^n_{k=0}$ as $i_0\ldots i_n$.
\end{definition}
\begin{definition}[Path]
    Let $\Lambda_n=\Lambda_n(V,\mathbb{K})$ be the $\mathbb{K}$-linear space containing all $\mathbb{K}$-linear combinations of elementary  $n$-paths. Elements in $\Lambda_n$ are called $n$-paths on $V$.
\end{definition}

For a non-negative integer $n$, when a elementary  $n$-path $i_0\ldots i_n$ is considered as an element in $\Lambda_n$, we denote it as $e_{i_0\ldots i_n}$. When the empty set $\emptyset$ is considered as an element in $\Lambda_{-1}$, we denote it as $e$.

It is noteworthy that $\Lambda_{-1}\cong\mathbb{K}$, and we set $\Lambda_{-2}=\{0\}$.
\subsection{Boundary Operator}
\begin{definition}[Non-Regular Boundary Operator]
    For $n\geq 0$, the non-regular boundary operator $\partial^{\text{nr}}:\Lambda_{n}\to\Lambda_{n-1}$ is a linear operator defined on elementary  paths as follows:
    \begin{align}
        \partial^{\text{nr}} e_{i_0\ldots i_n}=\sum_{q=0}^n(-1)^q e_{i_0\ldots\hat{i_q}\ldots i_n}
    \end{align}
    where $\hat{i_q}$ denotes the omission of the index $i_q$. We set $\partial:\Lambda_{-1}\to\Lambda_{-2}$ as $0$.
\end{definition}

Thus, we obtain the path chain complex $\Lambda_*$ as follows:
\begin{align}
    \ldots\rightarrow \Lambda_{n}\rightarrow \Lambda_{n-1}\rightarrow\ldots\rightarrow\Lambda_{0}\rightarrow\mathbb{K}\rightarrow 0
\end{align}
where all arrows are given by the non-regular boundary operator $\partial^{\text{nr}}$.

\begin{theorem}[{\citet{grigor2012homologies}}]
    $\partial^{\text{nr}}\circ\partial^{\text{nr}}=0$
\end{theorem}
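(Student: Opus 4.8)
The plan is to reduce to elementary paths by linearity and then run the standard boundary-squared-zero computation familiar from simplicial homology; the only real content is the sign bookkeeping forced by the index shifts. Since $\partial^{\text{nr}}$ is linear and the elementary $n$-paths $e_{i_0\ldots i_n}$ span $\Lambda_n$, it suffices to verify $\partial^{\text{nr}}\partial^{\text{nr}} e_{i_0\ldots i_n}=0$ for an arbitrary elementary path with $n\geq 1$ (for $n\leq 0$ the claim is immediate from the conventions $\Lambda_{-2}=\{0\}$ and $\partial^{\text{nr}}|_{\Lambda_{-1}}=0$, so the composite is forced to vanish there).

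First I would expand the outer operator: one application gives $\sum_{q=0}^n(-1)^q e_{i_0\ldots\hat{i_q}\ldots i_n}$, and a second application turns this into a double sum indexed by the two omitted positions. The crucial bookkeeping step is to record, for the inner application, the position each surviving vertex occupies \emph{after} the first omission. If we delete $i_q$ first and then delete $i_p$ with $p<q$, that vertex still sits at position $p$ and contributes the sign $(-1)^p$; but if $p>q$, deleting $i_q$ has shifted $i_p$ one slot to the left, so it now sits at position $p-1$ and contributes $(-1)^{p-1}$.

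The heart of the argument is then to split the resulting double sum into the ranges $p<q$ and $p>q$ and to show they cancel termwise. Collecting signs, the $p<q$ block carries $(-1)^{q+p}$, while the $p>q$ block carries $(-1)^{q+(p-1)}=-(-1)^{q+p}$. After relabelling the two omitted indices as an unordered pair $\{a,b\}$ with $a<b$, every term $e_{\ldots\hat{i_a}\ldots\hat{i_b}\ldots}$ appears exactly twice, once from each block, with opposite signs, so the entire sum collapses to zero.

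The main obstacle I expect is purely the sign accounting: one must be scrupulous about the renumbering of positions after the first deletion, since it is precisely the $(-1)^{p-1}$ versus $(-1)^p$ discrepancy that makes the two halves annihilate rather than reinforce. Once the two cases are separated and the index shift is applied correctly, the cancellation is automatic. Notably, no structural feature of path complexes beyond the combinatorics of the alternating-sign formula enters here — in particular, non-regularity causes no difficulty, because we never collapse paths with repeated vertices — which is why this proof mirrors the simplicial identity $\partial\circ\partial=0$ verbatim.
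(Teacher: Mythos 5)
Your proof is correct and takes essentially the same route as the source: the paper itself states this theorem without proof, citing \citet{grigor2012homologies}, and the argument there is exactly your standard double-sum computation on elementary paths, splitting into the $p<q$ and $p>q$ blocks so that the $(-1)^{p}$ versus $(-1)^{p-1}$ position shift forces termwise cancellation. Your handling of the degenerate cases $n\le 0$ via the conventions $\Lambda_{-2}=\{0\}$ and $\partial^{\text{nr}}|_{\Lambda_{-1}}=0$, and your remark that regularity plays no role at the level of $\Lambda_*$, are both accurate.
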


\subsection{Regular Paths}
\begin{definition}[Regular and Non-Regular Elementary  Paths]
    We call a elementary  path $i_1\ldots i_n$ non-regular if $i_{k-1}=i_k$ for some $k=1,\ldots,n$, otherwise, it is called regular.
\end{definition}

For $n\geq -1$, consider the subspace $\mathcal{R}_{n}$ spanned by regular $n$-paths over the field $\mathbb{K}$:
\begin{align}
    \mathcal{R}_{n}=\mathcal{R}_{n}(V):=\operatorname{span}_{\mathbb{K}}\{e_{i_{0}\ldots i_{n}}\mid i_{0}\ldots i_{n}\text{ regular}\}
\end{align}
and the subspace $I_{n}$ spanned by non-regular $n$-paths over the field $\mathbb{K}$:
\begin{align}
    I_{n}=I_{n}(V):=\operatorname{span}_{\mathbb{K}}\{e_{i_{0}\ldots i_{n}}\mid i_{0}\ldots i_{n}\text{ non-regular}\}
\end{align}
For $n=-2$, we set $\mathcal{R}_{-2}=I_{-2}=\{0\}$.

We have $\Lambda_n=\mathcal{R}_n\oplus I_n$, hence $\widetilde{\mathcal{R}_n}:=\Lambda_n/I_n\cong \mathcal{R}_n$.

\begin{definition}[Regular Paths]
    Elements in $\mathcal{R}_n$ are called regular paths.
\end{definition}

\begin{definition}[Regular Boundary Operator]
    We define the regular boundary operator on $\mathcal{R}_n$ by pulling back through the natural linear isomorphism $\mathcal{R}_n\to \widetilde{\mathcal{R}_n}$.
\end{definition}

We denote the boundary operator on $\mathcal{R}_{n}$ as the regular boundary operator, also denoted as $\partial$. Correspondingly, we denote the boundary operator on $\Lambda_{n}$ as the non-regular boundary operator, denoted as $\partial^{\text{nr}}$.

We have the following regular chain complex $\mathcal{R}_*$:
\begin{align}
    \ldots\rightarrow \mathcal{R}_{n}\rightarrow \mathcal{R}_{n-1}\rightarrow\ldots\rightarrow\mathcal{R}_{0}\rightarrow\mathbb{K}\rightarrow 0
\end{align}
where all arrows are given by the regular boundary operator $\partial$.

\subsection{Path Complexes and Their Generation}
Now we formally give the definition of path complexes.
\begin{definition}[Path Complex]
    For a set $V$, a path complex is a set $P$ of non-empty elementary  paths over $V$, such that for any $n \geq 0$, if $i_{0} \ldots i_{n} \in P$, then the truncated paths $i_{0} \ldots i_{n-1}$ and $i_{1} \ldots i_{n}$ are also in $P$.
\end{definition}

The set of $n$-elementary  paths in $P$ is denoted as $P_{n}$. Thus, the path complex $P$ can be viewed as the set $\left\{P_{n}\right\}_{n=-1}^{\infty}$ satisfying the above property. For a fixed path complex $P$, all elementary  paths coming from $P$ are called allowed, while all elementary  paths not in $P$ are called disallowed.

In particular, the set $P_{-1}$ contains an empty elementary  path $e$. Elements in $P_{0}$ (i.e., allowed 0-paths) are called vertices of $P$. $P_{0}$ is a subset of $V$. By definition, if $i_{0} \ldots i_{n} \in P$, then all $i_{k}$ are vertices. Thus, we can (and will) remove all non-vertices from the set $V$, so that $V=P_{0}$. Elements in $P_{1}$ are called edges of $P$.

We naturally consider the path complex generated by a set of paths.
\begin{definition}[Path Complex Generated by a Set of Paths]
    For a vertex set $V$, consider a finite subset $A$ of $\cup_{n=0}^\infty \Lambda_n\left(V\right)$, define
    \begin{align}
        \Delta A=\left\{{i_0\ldots i_{n-1}},{i_1\ldots i_{n}}|{i_0\ldots i_{n}}\in A\right\}
    \end{align}
    as the path complex generated by the path set $A$.
\end{definition}

Now let's consider the generation of path complexes from directed graphs. First, let's clarify the definition of directed graphs and related concepts.
\begin{definition}[Directed Graph]
    A directed graph is a pair $G=\left(V, E\right)$, where the vertex set $V$ is a finite set, and the edge set $E$ is a subset of $V \times V \backslash \Delta_{V}$, where
    \begin{align}
        \Delta_{V}:= \left\{ \left(i, i\right) \in V \times V \mid i \in V \right\}
    \end{align}
    The elements of $\Delta_{V}$ are called self-loops.
\end{definition}

We use $i \rightarrow j\in G$ to denote $(i, j)\in E$, and if it's clear from context, we also write it as $i \rightarrow j$. We use $i \stackrel{\smash{\scriptscriptstyle\rightarrow}}{=} j$ to indicate $i=j$ or $i \rightarrow j$.

\begin{definition}[Undirected Graph]
    An undirected graph is a pair $G=\left(V, E\right)$, where $E$ is a subset of $V \times V \backslash \Delta_{V}$.
\end{definition}

\begin{theorem}[\citet{grigor2012homologies}]
    A path complex generated from a directed graph exists if and only if it satisfies the additional condition: if all $2$-elementary  paths $i_{k-1} i_{k}$ are allowed in the elementary  path $i_{0} \ldots i_{n}$, then the entire elementary  path $i_{0} \ldots i_{n}$ is allowed.
\end{theorem}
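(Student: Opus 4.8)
The plan is to make the phrase ``generated from a directed graph'' precise and then characterize exactly which path complexes lie in the image of this construction. Given a digraph $G=(V,E)$, I would let $P(G)$ be the path complex consisting of the empty path, every vertex of $V$ as a $0$-path, and every elementary path $i_0\ldots i_n$ with $n\geq 1$ each of whose consecutive pairs satisfies $i_{k-1}\to i_k\in G$. A one-line check confirms $P(G)$ is genuinely a path complex: deleting the first or the last vertex of an edge-following path again yields an edge-following path, so the truncation axiom holds. With this dictionary, the theorem reduces to the statement that a path complex $P$ equals $P(G)$ for some digraph $G$ if and only if $P$ obeys the stated closure condition.

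For necessity I would assume $P=P(G)$ and take any elementary path $i_0\ldots i_n$ all of whose consecutive pairs $i_{k-1}i_k$ lie in $P$. Since the $1$-paths of $P(G)$ are precisely the edges $E$, each such pair records an edge $i_{k-1}\to i_k$; hence $i_0\ldots i_n$ follows edges throughout and therefore belongs to $P(G)=P$ by the very definition of $P(G)$. This is the routine direction.

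For sufficiency I would reconstruct the digraph directly from $P$: set $V:=P_0$ and declare $i\to j$ exactly when $ij\in P_1$, so that $E=\{(i,j):ij\in P_1\}$. I would then prove $P=P(G)$ by two inclusions. For $P\subseteq P(G)$, given $i_0\ldots i_n\in P$ I would iterate the truncation axiom from both ends to extract every contiguous subpath, in particular each consecutive pair $i_{k-1}i_k\in P_1=E$; thus the path follows edges and lies in $P(G)$. For $P(G)\subseteq P$, given an edge-following path $i_0\ldots i_n\in P(G)$, every consecutive pair $i_{k-1}i_k$ is an edge, i.e. lies in $P_1\subseteq P$, so the closure hypothesis forces $i_0\ldots i_n\in P$. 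At the levels $n=0,1$ the two complexes coincide by the construction of $G$, and the closure condition is used only for $n\geq 2$.

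The verification that $P(G)$ is a path complex and the bookkeeping of iterated truncations are routine; the step that carries the real weight is the inclusion $P(G)\subseteq P$, since edge-wise admissibility cannot by itself guarantee membership of the full path, and it is precisely the closure condition that repairs this gap, which is the content being characterized. A secondary technical point I would address is regularity: for the reconstructed $E$ to define a legitimate digraph we need $P_1$ to contain no self-loop $ii$, so I would either invoke the regularity of the path complexes under consideration or note explicitly that absence of self-loop edges is a standing hypothesis built into $E\subseteq V\times V\setminus\Delta_V$.
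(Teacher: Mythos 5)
Your proposal is correct, and there is nothing in the paper to diverge from: the paper states this theorem as an imported result of \citet{grigor2012homologies} without proof, and your two-inclusion argument --- reconstructing $G$ via $E=\{(i,j)\mid ij\in P_1\}$, using iterated truncation to get $P\subseteq P(G)$ and invoking the closure condition exactly where it is needed for $P(G)\subseteq P$ --- is the standard proof from that reference. Your closing caveat is also well taken: since the paper's digraphs have $E\subseteq V\times V\setminus\Delta_V$, the stated equivalence implicitly requires the path complex to be regular (no $1$-paths of the form $ii$), and making that hypothesis explicit, as you do, is the right call.
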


We denote by $P\left(G\right)$ the path complex generated by the directed graph $G=(V,E)$. Specifically,
\begin{align}
    P_0\left(G\right) & =V                                                                                                \\
    P_n\left(G\right) & =\left\{i_0\ldots i_{n}| i_0,\ldots, i_{n}\in V, i_{k-1}i_{k} \in E,k=1,\ldots ,n \right\},n\ge 1
\end{align}

Next, let's clearly describe the path complex generated by a simplicial complex.
\begin{definition}[Perfect Path Complex]
    We call a path complex $P$ perfect if any subsequence of allowed elementary  paths of $P$ is also allowed.
\end{definition}
\begin{definition}[Monotonic Path Complex]
    We call a path complex $P$ monotonic if there exists a strictly increasing real-valued function on the vertex set of $P$.
\end{definition}
\begin{theorem}[\citet{grigor2012homologies}]
    A path complex $P$ is generated by some simplicial complex if and only if it is perfect and monotonic.
\end{theorem}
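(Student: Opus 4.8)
The plan is to set up an explicit dictionary between the faces of a simplicial complex and the $\prec$-increasing allowed paths of $P$, where the total order $\prec$ is the one encoded by the monotonic function. Concretely, I recall that a simplicial complex $S$ on a totally ordered vertex set generates the path complex whose allowed $n$-paths are exactly the order-increasing sequences $v_0 \prec v_1 \prec \cdots \prec v_n$ with $\{v_0,\dots,v_n\}$ a simplex of $S$. Both directions of the equivalence will be read off from this description once the correct order is in hand.

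\emph{Only if.} Assuming $P = P(S)$ for such an $S$, perfectness is immediate: a subsequence of an allowed path $v_0 \prec \cdots \prec v_n$ has vertex set a subset of the simplex $\{v_0,\dots,v_n\}$, which is again a simplex because $S$ is downward closed, and the subsequence is still order-increasing, hence allowed. For monotonicity I would pick any $f \colon V \to \mathbb{R}$ that realizes the order (possible since $V$ is finite); by construction $f$ strictly increases along every allowed path.

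\emph{If.} This is the substantive direction. Starting from $f \colon V \to \mathbb{R}$ strictly increasing along allowed paths, I first promote $f$ to a genuine total order $\prec$ on $V$ by breaking ties through a fixed auxiliary order on the finite set $V$; since $f$ is \emph{strict} along paths, every allowed path already has strictly increasing $f$-values and is therefore $\prec$-increasing no matter how ties elsewhere are broken. I then define the candidate complex $S := \{\{v_0,\dots,v_n\} \subseteq V : \text{the } \prec\text{-sorted listing of } v_0,\dots,v_n \text{ is an allowed path of } P\}$.

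It remains to check that $S$ is a simplicial complex and that $P(S) = P$. Downward closure of $S$ is exactly where perfectness is used: a subset of a member of $S$ sorts to a subsequence of an allowed path, which is allowed by perfectness. The inclusion $P(S) \subseteq P$ holds by the definition of $S$, and $P \subseteq P(S)$ is where monotonicity does its work: every allowed path of $P$ is $\prec$-increasing, so it equals the sorted listing of its own vertex set and is thus produced by $S$. The main obstacle I anticipate is not a computation but the bookkeeping around the order --- guaranteeing that the monotonic function can be turned into a total order without spoiling the increasing-along-paths property, and confirming that ``sort the vertices'' commutes with passing to subsequences, so that perfectness delivers downward closure while monotonicity delivers the reverse inclusion. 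Once $\prec$ is fixed, both verifications amount to unwinding definitions.
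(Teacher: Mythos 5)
Your proof is correct, and it is essentially the construction the paper itself points to: the paper states this theorem as a citation to \citet{grigor2012homologies} without reproducing a proof, but its description of $P(S)$ via a vertex-ordering function (each simplex contributing its $f$-sorted listing as an allowed path) is exactly the dictionary you build, with perfectness giving downward closure of your candidate $S$ and strict monotonicity (after harmless tie-breaking, which is valid since vertices with equal $f$-values can never occur together in an allowed path) giving $P \subseteq P(S)$. No gaps to flag.
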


We denote by $P\left(S\right)$ the path complex generated by a simplicial complex $S$. Specifically, by using a monotonic function $f$ from the vertex set of $S$ to reorder and renumber it as $\left\{i_k\right\}_{k=0}^{N}$ such that $\ldots\le f(i_{k-1})\le f(i_{k})\le\ldots$, each simplex of $S$ can then be mapped to a set corresponding to the elementary  paths of $P\left(S\right)$.
\begin{align}
    P\left(S\right)=\left\{i_0\ldots i_{n}| \left\{ i_0,\ldots, i_n\right\}=\sigma\in S \right\}
\end{align}

\subsection{$\partial$-Invariant Paths}
%\subsection{Allowed Paths}
%
Given any path complex $P=\left\{P_{n}\right\}_{n=0}^{\infty}$, where $V$ is a finite vertex set, for any integer $n \geq -1$, consider the $\mathbb{K}$-linear space $\mathcal{A}^{\text{nr}}_{n}$ generated by all elementary  $n$-paths in $P$, i.e.,
\begin{align}
    \mathcal{A}^{\text{nr}}_{n}=\mathcal{A}^{\text{nr}}_{n}\left(P\right)=\operatorname{span}_{\mathbb{K}}\left\{ P_n\right\}
    =\left\{\sum_{i_{0}, \ldots, i_{n} \in V} v^{i_{0} \ldots i_{n}} e_{i_{0} \ldots i_{n}}\mid i_{0} \ldots i_{n} \in P_{n}, v^{i_{0} \ldots i_{n}} \in \mathbb{K}\right\}
\end{align}
For $n=-2$, we define $\mathcal{A}^{\text{nr}}_{-2}=\left\{0\right\}$.

By construction, $\mathcal{A}^{\text{nr}}_{n}$ is a subspace of the space $\Lambda_{n}$. And we have:
\begin{enumerate}
    \item $\mathcal{A}^{\text{nr}}_{0}$ is generated by all the vertices of $P$, thus $\mathcal{A}^{\text{nr}}_{0}=\Lambda_{0}$;
    \item The space $\mathcal{A}^{\text{nr}}_{1}$ is generated by all the edges of $P$, possibly fewer than $\Lambda_{1}$;
    \item $\mathcal{A}^{\text{nr}}_{-1} \cong \mathbb{K}$;
\end{enumerate}
\begin{definition}[Allowed Paths]
    Elements in $\mathcal{A}^{\text{nr}}_{n}$ are called allowed $n$-paths.
\end{definition}

Now we wish to restrict the boundary operator $\partial$ from the space $\Lambda_{n}$ to the space $\mathcal{A}^{\text{nr}}_{n}$. However, generally, $\partial \mathcal{A}^{\text{nr}}_{n}$ is not necessarily a subspace of $\mathcal{A}^{\text{nr}}_{n-1}$. Therefore, for any $n \geq-1$, we consider the following subspace of $\mathcal{A}^{\text{nr}}_{n}$:
\begin{align}
    \Omega^{\text{nr}}_{n}=\Omega^{\text{nr}}\left(P\right)=\left\{v \in \mathcal{A}^{\text{nr}}_{n}: \partial^{\text{nr}} v \in \mathcal{A}^{\text{nr}}_{n-1}\right\}
\end{align}
For $n=-2$, we define $\Omega_{-2}=\left\{0\right\}$.

We always have $\partial^{\text{nr}} \Omega^{\text{nr}}_{n} \subset \Omega^{\text{nr}}_{n-1}$, since if $v \in \Omega^{\text{nr}}_{n}$, then $\partial^{\text{nr}} v \in \mathcal{A}^{\text{nr}}_{n-1}$ and $\partial^{\text{nr}}\left(\partial^{\text{nr}} v\right)=0 \in \mathcal{A}^{\text{nr}}_{n-2}$, hence $\partial^{\text{nr}} v \in \Omega^{\text{nr}}_{n-1}$.
\begin{definition}[$\partial^{\text{nr}}$-Invariant Paths]
    Elements in $\Omega^{\text{nr}}_{n}$ are called $\partial^{\text{nr}}$-invariant $n$-paths.
\end{definition}

Now we consider the regular versions of the above concepts.
\begin{definition}[Regular Path Complex]
    A path complex $P$ is called regular if it does not contain paths of the form $ii$.
\end{definition}
%

% For regular path complexes, since the space of allowed $n$-paths $\mathcal{A}^{\text{nr}}_{n}$ in this case is a subspace of the regular $n$-path space $\mathcal{R}_{n}$, we can replace the non-regular boundary operator $\partial^{\text{nr}}$ with the regular boundary operator $\partial$. The resulting space $\Omega_{n}$ will be called the regular space of $\partial$-invariant paths, thus obtaining the regular version of the previous chain complex.

% More specifically, since we have the commutative diagram containing homomorphisms in Figure \ref{fig:reg}, this induces a homomorphism
\begin{figure}
    \centering
    \includegraphics[width=0.5\linewidth]{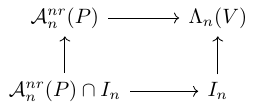}
    \caption{Commutative diagram containing homomorphisms}
    \label{fig:reg}
\end{figure}
\begin{align}
    {\mathcal{A}^{\text{nr}}_{n}\left(P\right)}/{\left\{ \mathcal{A}^{\text{nr}}_{n}\left(P\right) \cap I_{n}\right\}} \rightarrow {\Lambda_{n}\left(V\right)}/{I_{n}}=\mathcal{R}_{n}\left(V\right)
\end{align}

where $n \geq-2$. Denote the image of this homomorphism as
\begin{align}
    \mathcal{A}_{n}\left(P\right) \subset \mathcal{R}_{n}\left(V\right)
\end{align}
Note that ${\mathcal{A}^{\text{nr}}_{n}\left(P\right)}/{\left\{ \mathcal{A}^{\text{nr}}_{n}\left(P\right) \cap I_{n}\right\}}\cong \mathcal{A}_{n}\left(P\right)$.

\begin{definition}[Allowed Regular Paths]
    Elements in $\mathcal{A}_{n}$ are called allowed regular $n$-paths.
\end{definition}
Define $\Omega_{n}\left(P\right) \subset \mathcal{A}_{n}\left(P\right)$ as follows:

\begin{enumerate}
    \item For $n=-2,-1,0$, we define $\Omega_{n}\left(P\right)=\mathcal{A}_{n}\left(P\right)$;
    \item For $n \geq 1$, we define
          \begin{align}
              \Omega_{n}=\Omega_{n}\left(P\right)=\left\{ v \in \mathcal{A}_{n}\left(P\right) \mid \partial v \in \mathcal{A}_{n-1}\left(P\right) \text { where } \partial: \mathcal{R}_{n}\left(V\right) \rightarrow \mathcal{R}_{n-1}\left(V\right)\right\}
          \end{align}
\end{enumerate}

\begin{definition}[$\partial$-Invariant Paths]
    Elements in $\Omega_{n}$ are called $\partial$-invariant $n$-paths.
\end{definition}
%

% We obtain a reduced regular chain complex:
% \begin{align}
%     0 \leftarrow K \leftarrow \Omega_{0} \leftarrow \ldots \leftarrow \Omega_{n-1}\leftarrow \Omega_{n} \leftarrow \Omega_{n+1} \leftarrow \ldots
% \end{align}
% %
% where all arrows are given by $\partial$.

We obtain the chain complex $\Omega_*\left(P\right)$ of path homology for the path complex $P$:
\begin{align}
    0 \leftarrow \Omega_{0} \leftarrow \ldots \leftarrow \Omega_{n-1} \leftarrow \Omega_{n}\leftarrow \Omega_{n+1}\leftarrow \ldots
\end{align}

\begin{definition}[Homology Groups]
    The homology groups of $\Omega_*\left(P\right)$ are called the path homology groups of the path complex $P$, denoted as $H_*\left(P\right)=\left\{H_{n}\left(P\right)\right\}_{n \geq 0}$.
\end{definition}

If $P\left(G\right)$ is the path complex generated by the directed graph $G$, then we use the notation
\begin{align}
    \Omega_{n}\left(G\right):=\Omega_{n}\left(P\left(G\right)\right)
\end{align}

The corresponding homology groups will be denoted as $H_{n}\left(G\right)$ and called the path homology groups of the directed graph $G$.

It is also noted that one can determine $H_{n}$ using only $\mathcal{A}_{n},\mathcal{A}_{n+1}$.
\begin{proposition}[\citet{grigor2012homologies}]
    \label{determine}
    We have:
    \begin{align}
        H_{n}=\left.\operatorname{Ker} \partial\right|_{\mathcal{A}_{n}} /\left(\mathcal{A}_{n} \cap \partial \mathcal{A}_{n+1}\right),\forall n\ge 0
    \end{align}
\end{proposition}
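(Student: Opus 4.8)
The plan is to unwind the definition of $H_n$ as the homology of the chain complex $\Omega_*(P)$ and to match its numerator and denominator against the claimed expression term by term. By definition $H_n = \operatorname{Ker}\left(\partial|_{\Omega_n}\right) / \partial\,\Omega_{n+1}$, where $\partial|_{\Omega_n}\colon \Omega_n \to \Omega_{n-1}$. So it suffices to establish the two identities $\operatorname{Ker}\left(\partial|_{\Omega_n}\right) = \operatorname{Ker}\left(\partial|_{\mathcal{A}_n}\right)$ (equality of numerators) and $\partial\,\Omega_{n+1} = \mathcal{A}_n \cap \partial\,\mathcal{A}_{n+1}$ (equality of denominators); quotienting then yields the formula.

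First I would treat the numerator. Since $\Omega_n \subseteq \mathcal{A}_n$, the inclusion $\operatorname{Ker}\left(\partial|_{\Omega_n}\right) \subseteq \operatorname{Ker}\left(\partial|_{\mathcal{A}_n}\right)$ is immediate. For the reverse, take $v \in \mathcal{A}_n$ with $\partial v = 0$. Then $\partial v = 0 \in \mathcal{A}_{n-1}$, so $v$ satisfies the defining condition of $\Omega_n$ and hence $v \in \Omega_n$; thus $v \in \operatorname{Ker}\left(\partial|_{\Omega_n}\right)$. The point is simply that every cycle in $\mathcal{A}_n$ is automatically $\partial$-invariant, because the zero vector lies in every subspace. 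This disposes of the numerator.

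Next I would treat the denominator. The inclusion $\partial\,\Omega_{n+1} \subseteq \mathcal{A}_n \cap \partial\,\mathcal{A}_{n+1}$ is routine: $\Omega_{n+1} \subseteq \mathcal{A}_{n+1}$ gives $\partial\,\Omega_{n+1} \subseteq \partial\,\mathcal{A}_{n+1}$, while $\partial\,\Omega_{n+1} \subseteq \Omega_n \subseteq \mathcal{A}_n$. The reverse inclusion is where the definition of $\Omega_{n+1}$ does the essential work, and is the step I expect to be the crux. Take $w \in \mathcal{A}_n \cap \partial\,\mathcal{A}_{n+1}$, so that $w = \partial u$ for some $u \in \mathcal{A}_{n+1}$ and additionally $w \in \mathcal{A}_n$. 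Then $\partial u = w \in \mathcal{A}_n$, which is exactly the condition defining $u \in \Omega_{n+1}$; hence $w = \partial u \in \partial\,\Omega_{n+1}$. Combining the two identities proves the claim.

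The only care needed is the boundary case $n=0$, where one must read $\partial|_{\mathcal{A}_0}$ according to the convention fixed by the displayed complex $0 \leftarrow \Omega_0 \leftarrow \cdots$ (so that the outgoing map from degree $0$ is determined by that complex); the same two-step argument then applies verbatim once the convention is made explicit. I do not anticipate any genuinely hard computation here, since both identities are set-theoretic consequences of the subspace definitions, and the whole proof reduces to the observation that $\partial$-invariance is vacuous for cycles and is forced by membership in the image.
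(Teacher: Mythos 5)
Your proof is correct and is essentially the canonical argument: the paper states this proposition without proof, citing Grigor'yan--Lin--Muranov--Yau, and the proof there reduces to exactly your two identities --- $\operatorname{Ker}\left(\partial|_{\Omega_n}\right)=\operatorname{Ker}\left(\partial|_{\mathcal{A}_n}\right)$ because a cycle in $\mathcal{A}_n$ trivially satisfies $\partial v=0\in\mathcal{A}_{n-1}$, and $\partial\,\Omega_{n+1}=\mathcal{A}_n\cap\partial\,\mathcal{A}_{n+1}$ because $u\in\mathcal{A}_{n+1}$ with $\partial u\in\mathcal{A}_n$ is precisely the membership condition for $\Omega_{n+1}$. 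Your caveat at $n=0$ is also handled correctly, since the displayed complex $0\leftarrow\Omega_0\leftarrow\cdots$ fixes the outgoing map in degree $0$ to be zero, making the kernel all of $\mathcal{A}_0$.
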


\section{Homotopy of Directed Graphs and Path Complexes}

\subsection{Induced Maps}
We refer to \citet{lin2019weighted} for the definition and properties of induced maps.

Given two finite sets $V$ and $V^{\prime}$ and a mapping $f: V \rightarrow V^{\prime}$ between them, we define the induced map $f_{*}: \Lambda_{*}\left(V\right) \rightarrow \Lambda_{*}\left(V^{\prime}\right)$ on the path chain complex $\Lambda_*$ as follows:
\begin{enumerate}
    \item For $p =-1,-2$, $f_{*}$ is defined as an isomorphism.
    \item For $p \geq 0$, $f_{*}$ is defined on the natural basis as $f_{*}\left(e_{i_{0} \ldots i_{p}}\right)=e_{f\left(i_{0}\right) \ldots f\left(i_{p}\right)}$, and then extended $\mathbb{K}$-linearly to all elements of $\Lambda_{p}\left(V\right)$.
\end{enumerate}

Let $\partial^\text{nr},\partial^{\text{nr} \prime} $ denote the non-regular boundary operators on $\Lambda_{*}\left(V\right),\Lambda_{*}\left(V^{\prime}\right)$ respectively. It can be verified by direct calculation that
\begin{align}
    \partial^{\text{nr} \prime} f_*=f_* \partial^{\text{nr} }
\end{align}
Thus, $f_*: \Lambda_{*}\left(V\right) \rightarrow \Lambda_{*}\left(V^{\prime}\right)$ is a chain map.

If $e_{i_{0} \ldots i_{p}}$ is non-regular, then $f_{*}\left(e_{i_{0} \ldots i_{p}}\right)$ is also non-regular, hence
\begin{align}
    f_{*}\left(I_{p}\left(V\right)\right) \subset I_{p}\left(V^{\prime}\right)
\end{align}
Thus, $f_{*}$ is well-defined on the quotient $\Lambda_{p} / I_{p}$, and we can further induce a map on the regular chain complex $\mathcal{R}_*$:
\begin{align}
    f_{*}: \mathcal{R}_{*}\left(V\right) \rightarrow \mathcal{R}_{*}\left(V^{\prime}\right)
\end{align}

From the definition of $\mathcal{R}_{p}$, we derive the rule for $f_{*}: \mathcal{R}_{p}\left(V\right) \rightarrow \mathcal{R}_{p}\left(V^{\prime}\right)$ as follows:
\begin{align}
    f_{*}\left(e_{i_{0} \ldots i_{p}}\right)= \begin{cases}e_{f\left(i_{0}\right) \ldots f\left(i_{p}\right)} & \text {if } e_{f\left(i_{0}\right) \ldots f\left(i_{p}\right)} \text { is regular} \\ 0, & \text {otherwise }\end{cases}
\end{align}

Let $\partial,\partial^{\prime} $ denote the regular boundary operators on $\mathcal{R}_{*}\left(V\right),\mathcal{R}_{*}\left(V^{\prime}\right)$ respectively. It can be verified by direct calculation that
\begin{align}
    \partial^{\prime} f_*=f_* \partial
\end{align}
Thus, $f_*: \mathcal{R}_{*}\left(V\right) \rightarrow \mathcal{R}_{*}\left(V^{\prime}\right)$ is a chain map.

\subsection{Homotopy of Directed Graphs}

For directed graphs, we consider their sets of vertices and apply the concepts introduced above.

\begin{definition}[Directed Graph Map]
    If there exists a map $f: V_{G} \rightarrow V_{G^\prime}$ on the vertices such that
    \begin{align}
        i \rightarrow j\in G \Longrightarrow f\left(i\right) \stackrel{\smash{\scriptscriptstyle\rightarrow}}{=} f\left(j\right) \in G^\prime
    \end{align}
    we call $f$ an induced map between directed graphs $G$ and $G^\prime$, denoted as $f: G \rightarrow G^\prime$. Composition of directed graph maps $f,g$ is defined as the composition of vertex maps, inducing a map on directed graphs. We denote the category of all directed graphs and induced maps by $\mathbf{Dgr}$.
\end{definition}

$\mathbf{Dgr}$ satisfies the category axioms, see \citet{grigor2014homotopy}.

\begin{theorem}[\citet{grigor2014homotopy}]
    Let $G$ and $G^{\prime}$ be two directed graphs, and $f: G \rightarrow G^{\prime}$ be a directed graph map. Then the map $\left.f_{*}\right|_{\Omega_{p}\left(G\right)}$ provides a chain map on the path homology chain complex:
    \begin{align}
        f_{*}:\Omega_{*}\left(G\right) \rightarrow \Omega_{*}\left(G^{\prime}\right)
    \end{align}
    Thus, we also obtain group homomorphisms on path homology groups:
    \begin{align}
        f_{*}: H_{*}\left(G\right) \rightarrow H_{*}\left(G^{\prime}\right)
    \end{align}
\end{theorem}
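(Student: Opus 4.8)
The plan is to reduce the statement to two facts: (i) that $f_*$ already commutes with the regular boundary operator, which is established above via $\partial^{\prime} f_* = f_* \partial$ on $\mathcal{R}_*$; and (ii) that $f_*$ carries allowed regular paths of $P(G)$ into allowed regular paths of $P(G^{\prime})$, i.e.\ $f_*\bigl(\mathcal{A}_n(G)\bigr)\subseteq\mathcal{A}_n(G^{\prime})$ for every $n$. Once both hold, the restriction $\left.f_*\right|_{\Omega_*(G)}$ lands in $\Omega_*(G^{\prime})$ and is automatically a chain map, whereupon the induced maps on homology are obtained by the usual formal argument that a chain map sends cycles to cycles and boundaries to boundaries.

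The crux is fact (ii), and here I would argue on elementary paths. Let $e_{i_0\ldots i_n}$ be an allowed regular path of $P(G)$; by the explicit description $P_n(G)=\{\,i_0\ldots i_n \mid i_{k-1}i_k\in E,\ k=1,\ldots,n\,\}$ this means $i_{k-1}\to i_k\in G$ for each $k$. Applying the directed graph map property $i\to j\in G\Rightarrow f(i)\stackrel{\smash{\scriptscriptstyle\rightarrow}}{=}f(j)\in G^{\prime}$ to each consecutive pair gives $f(i_{k-1})\stackrel{\smash{\scriptscriptstyle\rightarrow}}{=}f(i_k)$. Now split into two cases according to the explicit rule for $f_*$ on $\mathcal{R}_n$. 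If $f(i_0)\ldots f(i_n)$ is non-regular, then by definition $f_*(e_{i_0\ldots i_n})=0\in\mathcal{A}_n(G^{\prime})$. If instead it is regular, then $f(i_{k-1})\neq f(i_k)$ for every $k$, so the relation $f(i_{k-1})\stackrel{\smash{\scriptscriptstyle\rightarrow}}{=}f(i_k)$ forces the genuine edge $f(i_{k-1})\to f(i_k)\in G^{\prime}$; hence $f(i_0)\ldots f(i_n)\in P_n(G^{\prime})$ and $f_*(e_{i_0\ldots i_n})=e_{f(i_0)\ldots f(i_n)}\in\mathcal{A}_n(G^{\prime})$. Extending $\mathbb{K}$-linearly over the spanning set of allowed regular elementary paths yields $f_*\bigl(\mathcal{A}_n(G)\bigr)\subseteq\mathcal{A}_n(G^{\prime})$.

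With (ii) in hand the restriction is immediate. Take $v\in\Omega_n(G)$, so $v\in\mathcal{A}_n(G)$ and $\partial v\in\mathcal{A}_{n-1}(G)$. Then $f_* v\in\mathcal{A}_n(G^{\prime})$ by (ii), and using the commutation $\partial^{\prime} f_* = f_*\partial$ together with (ii) in degree $n-1$ we get $\partial^{\prime}(f_* v)=f_*(\partial v)\in\mathcal{A}_{n-1}(G^{\prime})$; therefore $f_* v\in\Omega_n(G^{\prime})$. Since $f_*$ commutes with $\partial$ on all of $\mathcal{R}_*$, it commutes with $\partial$ on the subspaces $\Omega_*$, so $\left.f_*\right|_{\Omega_*(G)}\colon\Omega_*(G)\to\Omega_*(G^{\prime})$ is a chain map. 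A chain map sends $\operatorname{Ker}\partial$ into $\operatorname{Ker}\partial^{\prime}$ and $\operatorname{Im}\partial$ into $\operatorname{Im}\partial^{\prime}$, hence descends to well-defined homomorphisms $f_*\colon H_*(G)\to H_*(G^{\prime})$.

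The main obstacle, and the only place requiring care, is the degeneration that occurs when $f$ identifies two adjacent vertices. In that event the image elementary path is non-regular and the regular induced map kills it; the argument is clean only because $f_*$ was first passed to the regular quotient $\mathcal{R}_*=\Lambda_*/I_*$, where such paths are already zero, and because $0$ trivially lies in every $\mathcal{A}_n(G^{\prime})$. The relation $\stackrel{\smash{\scriptscriptstyle\rightarrow}}{=}$ in the definition of a directed graph map is exactly what accommodates this collapse, and the concrete generation of $P(G)$ from the edges of $G$ is what lets us read off allowedness of the image purely from edge data. No further subtlety arises: the passage to homology is then formal.
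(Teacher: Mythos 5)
Your proposal is correct, and it should be noted that the paper itself offers no proof of this statement --- it is imported verbatim from \citet{grigor2014homotopy} --- so the comparison is with that reference, whose argument yours reconstructs essentially exactly: the chain-map identity $\partial^{\prime} f_{*}=f_{*}\partial$ on $\mathcal{R}_{*}$, the elementary-path case analysis giving $f_{*}\left(\mathcal{A}_{n}\left(G\right)\right)\subseteq\mathcal{A}_{n}\left(G^{\prime}\right)$ (with non-regular images killed in the regular quotient, which is precisely why $\stackrel{\smash{\scriptscriptstyle\rightarrow}}{=}$ appears in the definition of a digraph map), and the formal restriction to $\Omega_{*}$ followed by the standard passage to homology. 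No gap: your observation that regularity of the image path upgrades $f\left(i_{k-1}\right)\stackrel{\smash{\scriptscriptstyle\rightarrow}}{=}f\left(i_{k}\right)$ to a genuine edge is the one nontrivial point, and you handle it correctly.
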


Let $G_{X}=\left(X, E_{X}\right), G_{Y}=\left(Y, E_{Y}\right)$ be two directed graphs. The product directed graph $G_{X} \times G_{Y}=\left(X \times Y, E_{X \times Y}\right)$ is defined as follows:
\begin{align}
    X \times Y     & := \left\{ \left(x, y\right)\mid x \in X, y \in \right\}                                                                                                                                                        \\
    E_{X \times Y} & :=\left\{ \left(\left(x, y\right),\left(x^{\prime}, y^{\prime}\right)\right) \in\left(X \times Y\right)^{2}\mid x=x^{\prime},y\rightarrow y^{\prime} \text { or } y=y^{\prime},x\rightarrow x^{\prime} \right\}
\end{align}

A directed graph $I$ is defined as a directed graph with vertices $ \left\{ 0,1 \right\} $ and an edge $\left(0,1\right)$.

\begin{definition}[One-Step Homotopy]
    Two directed graph maps $f, g: G_{X} \rightarrow G_{Y}$ are said to be one-step homotopic, denoted by $f \simeq_1 g$, if there exists a directed graph map $F: G_{X} \times I \rightarrow G_{Y}$ such that
    \begin{align}
        \left.F\right|_{G_{X} \times \left\{ 0 \right\} }=f,\left.F\right|_{G_{X} \times \left\{ 1 \right\} }=g \text { or } \left.F\right|_{G_{X} \times \left\{ 0 \right\} }=g,\left.F\right|_{G_{X} \times \left\{ 1 \right\} }=f
    \end{align}
    This is equivalent to
    \begin{align}
        f\left(x\right) \stackrel{\smash{\scriptscriptstyle\rightarrow}}{=} g\left(x\right), \forall x \in X \text { or } g\left(x\right) \stackrel{\smash{\scriptscriptstyle\rightarrow}}{=} f\left(x\right), \forall x \in X
    \end{align}
\end{definition}
\begin{definition}[Homotopy]
    Two directed graph maps $f, g: G_{X} \rightarrow G_{Y}$ are said to be homotopic, denoted by $f \simeq g$, if there exist directed graph maps $f_{0}=f, f_{1}, \ldots, f_{n}=g: G_{X} \rightarrow G_{Y}$ such that $f_{i-1}, f_{i}, i=0,\ldots, n$ are one-step homotopic.
\end{definition}

\begin{definition}[Homotopy Equivalence]
    Directed graphs $G_{X}$ and $G_{Y}$ are said to be homotopy equivalent if there exist directed graph maps $f: G_{X} \rightarrow G_{Y}$ and $g: G_{Y} \rightarrow G_{X}$ such that $g \circ f \simeq \operatorname{id}_{G_{X}}$ and $f \circ g \simeq \operatorname{id}_{G_{Y}}$.
\end{definition}

From the concept of homotopy, the following theorem about the homotopy groups of paths can be derived:

\begin{theorem}[\citet{grigor2014homotopy}]
    \label{homotopy1}
    For directed graphs $G, G^{\prime}$ and homotopic directed graph maps $f, g: G \rightarrow G^{\prime}$, the induced maps on homology groups are the same, i.e.:
    \begin{align}
        f_{ * }=g_{ * }: H_{*}\left(G\right) \rightarrow H_{*}\left(G^{\prime}\right)
    \end{align}
    If the digraphs $G$ and $H$ are homotopy equivalent, then they have isomorphic homology groups. Furthermore, if the homotopical equivalence of G and H is provided by the digraph maps then their induced maps $f_*$ and $g_*$ provide mutually inverse isomorphisms of the homology groups of $G$ and $H$.

\end{theorem}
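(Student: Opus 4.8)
The plan is to prove the first assertion --- that one-step homotopic maps induce identical maps on path homology --- by constructing an explicit chain homotopy, and then to deduce the homotopy-equivalence statement by pure functoriality. First I would reduce to a one-step homotopy. Since $f \simeq g$ unwinds into a finite chain $f = f_0 \simeq_1 f_1 \simeq_1 \cdots \simeq_1 f_n = g$ and equality of induced maps on $H_*$ is transitive, it suffices to show that $f \simeq_1 g$ implies $f_* = g_*$ on homology. By the definition of one-step homotopy there is a digraph map $F : G_X \times I \to G_Y$ restricting to $f$ and $g$ on $G_X \times \{0\}$ and $G_X \times \{1\}$. Writing $i_0, i_1 : G_X \to G_X \times I$ for the inclusions $x \mapsto (x,0)$ and $x \mapsto (x,1)$ (each of which is a digraph map by the definition of $E_{X \times I}$), we have $f = F \circ i_0$ and $g = F \circ i_1$ up to the orientation choice in the definition. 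By functoriality of the induced maps on homology (the preceding theorem), it is then enough to prove $(i_0)_* = (i_1)_* : H_*(G_X) \to H_*(G_X \times I)$.

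Next I would set up the cross product of chains adapted to the product digraph. The interval digraph $I$ has $\Omega_0(I) = \operatorname{span}\{e_0, e_1\}$ and $\Omega_1(I) = \operatorname{span}\{e_{01}\}$ with $\partial e_{01} = e_1 - e_0$, and $\Omega_n(I) = 0$ for $n \ge 2$. Following the GLMY construction \cite{grigor2014homotopy}, for $v \in \Omega_p(G_X)$ and $w \in \Omega_q(I)$ there is a cross product $v \times w$, defined on elementary paths as the signed sum over monotone staircase paths in the grid $[0,p] \times [0,q]$, whose edges are exactly the axis-aligned moves allowed in $G_X \times I$. The two structural facts I would invoke are that this product lands in $\Omega_{p+q}(G_X \times I)$ --- i.e. staircase sums of $\partial$-invariant paths remain allowed and $\partial$-invariant on the product --- and that it obeys the signed Leibniz rule
\[
    \partial(v \times w) = (\partial v) \times w + (-1)^{p}\, v \times (\partial w).
\]
Under this product the two inclusions are simply $(i_0)_* v = v \times e_0$ and $(i_1)_* v = v \times e_1$.

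Then I would define the prism operator $h : \Omega_p(G_X) \to \Omega_{p+1}(G_X \times I)$ by $h(v) = (-1)^{p}\, v \times e_{01}$ and compute, using the Leibniz rule and $\partial e_{01} = e_1 - e_0$,
\[
    \partial h(v) + h(\partial v) = v \times (e_1 - e_0) = (i_1)_* v - (i_0)_* v ,
\]
the degree-dependent sign in $h$ being chosen precisely so that the two leftover staircase terms cancel. This is the standard chain-homotopy identity $\partial h + h \partial = (i_1)_* - (i_0)_*$, so the two chain maps agree on homology. Combining with the reduction of the first paragraph yields $f_* = g_*$ on $H_*$. The homotopy-equivalence statement now follows formally: if $g \circ f \simeq \operatorname{id}_{G_X}$ and $f \circ g \simeq \operatorname{id}_{G_Y}$, then by the first part and functoriality $g_* \circ f_* = (g \circ f)_* = \operatorname{id}$ on $H_*(G_X)$ and $f_* \circ g_* = \operatorname{id}$ on $H_*(G_Y)$, so $f_*$ and $g_*$ are mutually inverse isomorphisms induced by the equivalence maps.

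The main obstacle is the structural input taken for granted above: that the cross product is genuinely well defined as a map into $\Omega_*(G_X \times I)$, meaning the staircase sum $v \times e_{01}$ of $\partial$-invariant paths is itself an allowed, $\partial$-invariant path on the product digraph, and that it satisfies the signed Leibniz rule with consistent signs. This is exactly the Eilenberg--Zilber/Künneth-type phenomenon for path homology established in \cite{grigor2014homotopy}, and verifying the staircase combinatorics together with the sign bookkeeping is where the real work lies. Once that input is granted, the prism argument is a formal calculation mirroring the classical proof of homotopy invariance of simplicial homology.
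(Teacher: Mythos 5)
The paper does not actually prove this theorem---it is quoted verbatim from \citet{grigor2014homotopy}---and your argument (reduction to a one-step homotopy, the staircase cross product $\Omega_p(G_X)\otimes\Omega_q(I)\to\Omega_{p+q}(G_X\times I)$ with its signed Leibniz rule, the prism operator $h(v)=(-1)^p\,v\times e_{01}$ giving $\partial h + h\partial = (i_1)_* - (i_0)_*$, and the formal functoriality step for the equivalence statement) is essentially the proof given in that cited source, with the sign computation checking out. The one structural input you correctly flag as the real work---that the staircase product of $\partial$-invariant paths is allowed and $\partial$-invariant on the product digraph---is exactly the lemma Grigor'yan--Lin--Muranov--Yau establish before running the same prism calculation, so your proposal is correct and matches the original route.
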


\subsection{Homotopy of Path Complexes}
Now we consider the case of path complexes. Since mappings on vertex sets are often considered in this subsection, we represent a path complex $P$ with the tuple $(V,P)$ to emphasize its vertex set $V$.

As discussed earlier, for two finite sets $V, V^{\prime}$ and mappings $f: V \rightarrow V^{\prime}$, we denote by $f_{*}$ the induced map $f_{*}: \Lambda_{*}\left(V\right) \rightarrow \Lambda_{*}\left(V^{\prime}\right)$.

\begin{definition}[Weak Morphism of Path Complexes]
    If for any path $v \in P$, the path $f_{*}\left(v\right)$ either lies in $P^{\prime}$ or is non-regular, then the set map $f: V \rightarrow V^{\prime}$ is called a weak morphism between path complexes $P$ and $P^{\prime}$, denoted
    \begin{align}
        f_{\circ}=(f,f_*):\left(V, P\right) \rightarrow\left(V^{\prime}, P^{\prime}\right)
    \end{align}
    For weak morphisms of path complexes $ f_{\circ}, g_{\circ}=(g,g_*)$, their composition is defined as $g_{\circ}\circ f_{\circ}:=\left(g\circ f, g_*\circ f_*\right)$.
    We denote by $\mathbf{PC}$ the category consisting of all path complexes and weak morphisms between them.
\end{definition}

$\mathbf{PC}$ satisfies the category axioms, see \citet{carlsson2009topology}.

We denote by $\mathbf{Ch}$ the category of chain complexes, where objects are chain complexes and morphisms are chain maps.

\begin{theorem}[\citet{grigor2019homology}]
    The weak morphism $f_{\circ}:\left(V, P\right) \rightarrow\left(V^{\prime}, P^{\prime}\right)$ of simplicial complexes induces chain maps on the chain complexes of simplicial complexes:
    \begin{align}
        f_{*}: \Omega_{*}\left(P\right) \rightarrow \Omega_{*}\left(P^{\prime}\right)
    \end{align}
    and homomorphisms on the homology groups:
    \begin{align}
        f_{*}: H_{*}\left(P\right) \rightarrow H_{*}\left(P^{\prime}\right)
    \end{align}
\end{theorem}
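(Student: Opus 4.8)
The plan is to build on the chain-map structure already established for the induced map $f_*: \mathcal{R}_*(V) \to \mathcal{R}_*(V')$ and to show that the weak-morphism hypothesis is exactly what is needed to restrict this chain map to the $\partial$-invariant subcomplexes $\Omega_*$. Everything reduces to a single containment statement, after which the chain-map and homology claims follow formally.

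First I would verify the key containment $f_*(\mathcal{A}_n(P)) \subset \mathcal{A}_n(P')$ for every $n$. It suffices to check this on generators $e_{i_0 \ldots i_n}$ with $i_0 \ldots i_n \in P_n$ allowed. By the definition of a weak morphism, the image path $f(i_0) \ldots f(i_n)$ either lies in $P'$ or is non-regular. Passing to the regular quotient $\mathcal{R}_*(V')$, the rule for $f_*$ sends $e_{i_0 \ldots i_n}$ to $e_{f(i_0) \ldots f(i_n)}$ when the latter is regular and to $0$ otherwise. In the first case the regular image path lies in $P'$, hence in $\mathcal{A}_n(P')$ by definition; in the second case the image is $0 \in \mathcal{A}_n(P')$. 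Extending $\mathbb{K}$-linearly gives the claimed containment.

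Next I would combine this containment with the already-proved identity $\partial' f_* = f_* \partial$ on $\mathcal{R}_*$ to obtain $f_*(\Omega_n(P)) \subset \Omega_n(P')$. For $n \le 0$ we have $\Omega_n = \mathcal{A}_n$, so the previous step already suffices. For $n \ge 1$, take $v \in \Omega_n(P)$, so $v \in \mathcal{A}_n(P)$ and $\partial v \in \mathcal{A}_{n-1}(P)$. The containment gives $f_*(v) \in \mathcal{A}_n(P')$, and the chain-map identity yields $\partial'(f_*(v)) = f_*(\partial v) \in \mathcal{A}_{n-1}(P')$, again by the containment applied to $\partial v$. Hence $f_*(v) \in \Omega_n(P')$. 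Since $f_*$ already commutes with the boundary operator, its restriction $f_*: \Omega_*(P) \to \Omega_*(P')$ is a chain map. The homology statement is then automatic: any chain map carries cycles to cycles and boundaries to boundaries, so it descends to homomorphisms $f_*: H_*(P) \to H_*(P')$.

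The main obstacle---indeed the only point requiring thought---is the first step. The weak-morphism condition is phrased combinatorially in terms of elementary paths being allowed or non-regular, and the work is to see that this translates cleanly into an inclusion of the allowed-regular subspaces after quotienting by $I_n$. The crux is that the two alternatives permitted by the weak-morphism definition, namely ``lies in $P'$'' and ``is non-regular'', match up precisely with the two outputs of $f_*$ in the regular quotient, namely ``lands in $\mathcal{A}_n(P')$'' and ``maps to $0$''; this alignment is what prevents any allowed path from leaking outside $\mathcal{A}_n(P')$, and it is the reason the weak-morphism notion is the correct one for inducing maps on path homology.
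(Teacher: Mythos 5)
Your proof is correct. The paper states this theorem without giving a proof, citing \citet{grigor2019homology}; your argument --- first establishing the containment $f_{*}\left(\mathcal{A}_{n}\left(P\right)\right) \subset \mathcal{A}_{n}\left(P^{\prime}\right)$ by matching the weak-morphism dichotomy (``lies in $P^{\prime}$'' versus ``non-regular'') against the two outputs of $f_{*}$ on the regular quotient, then combining this with the already-verified identity $\partial^{\prime} f_{*}=f_{*} \partial$ on $\mathcal{R}_{*}$ to get $f_{*}\left(\Omega_{n}\left(P\right)\right) \subset \Omega_{n}\left(P^{\prime}\right)$ and descending to homology --- is exactly the standard restriction argument from that reference, so there is nothing to add.
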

\begin{proposition}[\citet{grigor2019homology}]
    For weak morphism $f_{\circ}:\left(V, P\right) \rightarrow\left(V^{\prime}, P^{\prime}\right)$ between simplicial complexes, we define
    \begin{align}
        \Omega_{*}\left(f_{\circ}\right):=f_{*}: \Omega_{*}\left(P\right) \rightarrow \Omega_{*}\left(P^{\prime}\right)
    \end{align}
    Then $\Omega_{*}:\mathbf{PC}\to \mathbf{Ch}$ is a functor.
    \label{fun_pc_ch}
\end{proposition}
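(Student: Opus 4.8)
The plan is to reduce the proposition to the two functor axioms, since the construction preceding it already handles the object and morphism assignments. On objects, each path complex $P$ has already been equipped with the chain complex $\Omega_*(P)$, and the theorem immediately above shows that any weak morphism $f_\circ$ induces a chain map $\Omega_*(f_\circ) = f_*|_{\Omega_*(P)}: \Omega_*(P) \to \Omega_*(P')$. Hence $\Omega_*$ is already well-defined as an assignment on objects and morphisms; all that remains is to verify that it respects identities and composition.

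Identities I would dispatch directly. The identity morphism of $(V,P)$ in $\mathbf{PC}$ is $(\mathrm{id}_V, (\mathrm{id}_V)_*)$, and on the natural basis $(\mathrm{id}_V)_*(e_{i_0 \ldots i_p}) = e_{i_0 \ldots i_p}$. Thus $(\mathrm{id}_V)_*$ is the identity on $\Lambda_*(V)$, descends to the identity on $\mathcal{R}_*(V)$, and restricts to the identity on the subspace $\Omega_*(P)$, giving $\Omega_*(\mathrm{id}_{(V,P)}) = \mathrm{id}_{\Omega_*(P)}$.

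The substantive axiom is compatibility with composition. Given weak morphisms $f_\circ: (V,P)\to(V',P')$ and $g_\circ: (V',P')\to(V'',P'')$, I would first establish the multiplicativity of the induced map, $(g\circ f)_* = g_*\circ f_*$. At the level of $\Lambda_*$ this is immediate on elementary paths, since both sides send $e_{i_0 \ldots i_p}$ to $e_{g(f(i_0)) \ldots g(f(i_p))}$, and the equality extends $\mathbb{K}$-linearly. I would then descend the identity to the regular complex $\mathcal{R}_*$, where the induced maps annihilate non-regular images; this requires a short case analysis, which I regard as the crux. If $e_{f(i_0) \ldots f(i_p)}$ is non-regular, then $f(i_{k-1}) = f(i_k)$ for some $k$, so $g(f(i_{k-1})) = g(f(i_k))$ and the final path $e_{g(f(i_0)) \ldots g(f(i_p))}$ is non-regular as well; both $(g\circ f)_*$ and $g_*\circ f_*$ then return $0$. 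Otherwise the regular intermediate image is carried through $g$ identically by both sides. This shows $(g\circ f)_* = g_*\circ f_*$ on $\mathcal{R}_*$, matching the composition law $g_\circ \circ f_\circ = (g\circ f, g_*\circ f_*)$ of $\mathbf{PC}$.

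Finally I would restrict to $\Omega_*$. Because the preceding theorem guarantees $f_*(\Omega_*(P)) \subseteq \Omega_*(P')$, the restriction of $g_*\circ f_*$ to $\Omega_*(P)$ factors as $\bigl(g_*|_{\Omega_*(P')}\bigr)\circ\bigl(f_*|_{\Omega_*(P)}\bigr)$, yielding $\Omega_*(g_\circ\circ f_\circ) = \Omega_*(g_\circ)\circ\Omega_*(f_\circ)$. The main obstacle is precisely the descent to $\mathcal{R}_*$: passing to regular paths kills any path with a repeated consecutive vertex, so multiplicativity of the induced map is not formal and could a priori fail when $f$ collapses adjacent vertices; the resolution is that such a collapse is preserved by every subsequent vertex map, forcing both composites to vanish together.
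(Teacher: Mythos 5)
Your proof is correct, but note that the paper itself offers no argument for this proposition: it is quoted from \citet{grigor2019homology} with the proof deferred to that reference (the only functoriality the paper verifies in-text is the easier $P:\mathbf{Dgr}\to\mathbf{PC}$), so there is no internal proof to compare against, and your verification is a sound reconstruction of the standard one. Two remarks on economy. First, your case analysis at the regularization step can be packaged more slickly: since $f_{*}\left(I_{p}\left(V\right)\right)\subset I_{p}\left(V^{\prime}\right)$ (already recorded in the paper's section on induced maps), the passage $\Lambda_{*}\mapsto\Lambda_{*}/I_{*}\cong\mathcal{R}_{*}$ is itself functorial, and $\left(g\circ f\right)_{*}=g_{*}\circ f_{*}$ on $\mathcal{R}_{*}$ descends formally from the obvious identity on $\Lambda_{*}$ without inspecting which paths die; your explicit argument (a collapse $f\left(i_{k-1}\right)=f\left(i_{k}\right)$ persists under $g$, so both composites vanish together) is exactly the content of that containment, unwound by hand. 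Second, given the paper's composition law $g_{\circ}\circ f_{\circ}:=\left(g\circ f,\,g_{*}\circ f_{*}\right)$ in $\mathbf{PC}$, the functor axiom $\Omega_{*}\left(g_{\circ}\circ f_{\circ}\right)=\Omega_{*}\left(g_{\circ}\right)\circ\Omega_{*}\left(f_{\circ}\right)$ needs only the containment $f_{*}\left(\Omega_{*}\left(P\right)\right)\subseteq\Omega_{*}\left(P^{\prime}\right)$ supplied by the preceding theorem, exactly as in your final step; the multiplicativity $\left(g\circ f\right)_{*}=g_{*}\circ f_{*}$ that you single out as the crux is really the consistency check that this composition law returns a morphism of the prescribed form $\left(h,h_{*}\right)$, i.e.\ part of $\mathbf{PC}$ being a category, which the paper assumes by citation. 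So your proof is complete and even proves slightly more than the minimum; the quotient-functor observation would shorten it.
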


Now we begin the introduction to homotopy theory of simplicial complexes.

Let $I= \left\{ 0,1 \right\} $ be the set containing two elements. For any set $V= \left\{ 0, \ldots, n \right\} $, let $V \times I$ denote the Cartesian product. Let $V^{\prime}$ be a copy of the set $V$, denoted by $V^{\prime}=\left\{ 0^{\prime}, \ldots, n^{\prime}\right\}$, where $i^{\prime} \in V^{\prime}$ corresponds to $i \in V$. Then, we can associate $V \times I$ with $V \cup V^{\prime}$, where for any $i \in V$, $\left(i, 0\right)$ corresponds to $i$, and $\left(i, 1\right)$ corresponds to $i^{\prime}$. Thus, $V$ is associated with $V \times \left\{ 0 \right\} \subset V \times I$, and $V^{\prime}$ is associated with $V \times \left\{ 1 \right\} \subset V \times I$.

We define a simplicial complex $P^{\prime}$ on the set $V^{\prime}$ by the following condition:
\begin{align}
    i_{0}^{\prime} \ldots i_{n}^{\prime} \in P^{\prime}\iff i_{0} \ldots i_{n} \in P
\end{align}

Define the simplicial complex $P \times I$ as a simplicial complex on $V \times I=V \cup V^{\prime}$:
\begin{align}
    P \times I
    = & \left\{ w \mid w \in P \right\}
    \cup\left\{ w^{\prime} \mid w^{\prime} \in P^{\prime}\right\} \notag                                                                           \\
      & \cup\left\{ \widehat{w}=i_{0} \ldots i_{k} i_{k}^{\prime} \ldots i_{n}^{\prime} \mid i_{0} \ldots i_{k} i_{k+1} \ldots i_{n} \in P\right\}
\end{align}
where $0 \leq k \leq n$.
\begin{definition}[Weak One-Step Homotopy]
    We call two weak morphisms $f_{\circ}, g_{\circ}:\left(V, P\right) \rightarrow\left(W, S\right)$ of simplicial complexes weakly one-step homotopic, denoted as $ f_{\circ} \sim_{1} g_{\circ}$, if there exists a weak morphism $F_{\circ}:\left(V \times I, P \times I\right) \rightarrow\left(W, S\right)$ of simplicial complexes and at least one of the following two conditions holds:
    \begin{align}
        \begin{aligned}
             & \text { 1. }\left.F_{\circ}\right|_{\left(V, P\right)}=f_{\circ},\left.F_{\circ}\right|_{\left(V^{\prime}, P^{\prime}\right)}=g_{\circ}^{\prime} \\
             & \text { 2. }\left.F_{\circ}\right|_{\left(V, P\right)}=g_{\circ},\left.F_{\circ}\right|_{\left(V^{\prime}, P^{\prime}\right)}=f_{\circ}^{\prime}
        \end{aligned}
    \end{align}
\end{definition}
\begin{definition}[Weak Homotopy]
    We call two weak morphisms $f_{\circ}, g_{\circ}:\left(V, P\right) \rightarrow\left(W, S\right)$ of simplicial complexes weakly homotopic, denoted as $f_{\circ} \sim g_{\circ}$, if there exists a sequence of weak morphisms
    \begin{align}
        f_{i_{\circ}}:\left(V, P\right) \rightarrow\left(W, S\right)
    \end{align}
    such that $f_{\circ}=f_{0 \circ} \sim_{1} f_{1 \circ} \sim_{1} \cdots \sim_{1} f_{n_{\circ}}=g_{\circ}$.
\end{definition}

\begin{definition}[Weak Homotopy Equivalence]
    If there exist weak morphisms
    \begin{align}
        f_{\circ}:\left(V, P\right) \rightarrow\left(W, S\right), g_{\circ}:\left(W, S\right) \rightarrow\left(V, P\right)
    \end{align}
    such that
    \begin{align}
        f_{\circ}\circ g_{\circ} \sim \operatorname{id}_{W \circ}, g_{\circ
                \circ} \sim \operatorname{id}_{V \circ}
    \end{align}
    then two simplicial complexes $\left(V, P\right)$ and $\left(W, S\right)$ are weakly homotopy equivalent.
 
\end{definition}

\begin{theorem}[\citet{grigor2019homology}]
        Weakly homotopic morphisms of simplicial complexes
    \begin{align}
        f_{\circ} \sim g_{\circ}:\left(V, P\right) \rightarrow\left(W, S\right)
    \end{align}
    induce homotopy chain maps on chain complexes
    \begin{align}
        f_{*} \simeq g_{*}: \Omega_{*}\left(P\right) \rightarrow \Omega_{*}\left(S\right)
    \end{align}
    thereby inducing identical homomorphisms on the homology groups
    \begin{align}
        f_{*} = g_{*}: H_{*}\left(P\right) \rightarrow H_{*}\left(S\right)
    \end{align}
    If the path complexes $\left(V, P\right)$ and $\left(W, S\right)$ are weakly homotopy equivalent, then they have isomorphic homology groups. Furthermore, if the weak homotopy equivalence is provided by $f_{\circ}$ and $g_{\circ}$, then the induced group homomorphisms $f_{*}$ and $g_{*}$ provide mutual inverse isomorphisms of the homology groups of $\left(V, P\right)$ and $\left(W, S\right)$.
    \label{homotopy2}
\end{theorem}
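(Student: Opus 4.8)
The plan is to establish the statements in the order given, treating the chain-homotopy assertion as the technical core and deducing the two homology assertions from it by standard homological algebra together with the functoriality of $\Omega_*$ (Proposition \ref{fun_pc_ch}).

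First I would reduce $f_* \simeq g_*$ to the case of a single weak one-step homotopy. Chain homotopy is transitive, since the sum of two chain homotopies is again a chain homotopy, so given the defining chain $f_\circ = f_{0\circ} \sim_1 \cdots \sim_1 f_{n\circ} = g_\circ$ it suffices to prove $f_{(i-1)*} \simeq f_{i*}$ for each one-step stage. Assume then $f_\circ \sim_1 g_\circ$, witnessed by a weak morphism $F_\circ : (V \times I, P \times I) \to (W, S)$ with, say, $F_\circ|_{(V,P)} = f_\circ$ and $F_\circ|_{(V', P')} = g_\circ'$. I would factor this through the cylinder: the vertex maps $i_0(v) = (v,0)$ and $i_1(v) = (v,1)$ define weak morphisms $i_{0\circ}, i_{1\circ} : (V, P) \to (V \times I, P \times I)$ with $f_\circ = F_\circ \circ i_{0\circ}$ and $g_\circ = F_\circ \circ i_{1\circ}$. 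By functoriality $f_* = F_* i_{0*}$ and $g_* = F_* i_{1*}$, with $F_*$ a chain map, so it is enough to produce a chain homotopy between $i_{0*}$ and $i_{1*}$ and then transport it along $F_*$.

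The heart of the argument, and the step I expect to be the main obstacle, is the construction and verification of the prism operator $\tau_n : \Omega_n(P) \to \Omega_{n+1}(P \times I)$, defined on regular elementary paths by
\[
\tau_n\!\left(e_{i_0 \ldots i_n}\right) = \sum_{k=0}^{n} (-1)^k \, e_{i_0 \ldots i_k i_k' \ldots i_n'},
\]
the signed sum of exactly the cylinder paths $\widehat{w}$ appearing in the definition of $P \times I$, together with the prism identity
\[
\partial \tau_n + \tau_{n-1} \partial = i_{1*} - i_{0*}.
\]
Verifying this identity in $\mathcal{R}_*(V \times I)$ is a direct but sign-delicate computation. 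The genuine difficulty, special to path homology, is that one cannot simply define $\tau$ on all chains and restrict: it must be shown that $\tau$ lands in the $\partial$-invariant subspace $\Omega_{n+1}(P \times I)$, not merely in the allowed space $\mathcal{A}_{n+1}(P \times I)$. I would let the prism identity bootstrap this. Each cylinder path is allowed in $P \times I$ by construction, so $\tau_n$ maps $\mathcal{A}_n(P)$ into $\mathcal{A}_{n+1}(P \times I)$; and for $v \in \Omega_n(P)$ the identity gives $\partial \tau_n(v) = i_{1*} v - i_{0*} v - \tau_{n-1}(\partial v)$, whose right-hand side is allowed because $\partial v \in \mathcal{A}_{n-1}(P)$ forces $\tau_{n-1}(\partial v) \in \mathcal{A}_n(P \times I)$. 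Hence $\partial \tau_n(v) \in \mathcal{A}_n(P \times I)$, so $\tau_n(v) \in \Omega_{n+1}(P \times I)$ by the very definition of $\Omega$. Setting $h := F_* \circ \tau$ and using that $F_*$ commutes with $\partial$ then yields $\partial h + h \partial = F_*(i_{1*} - i_{0*}) = g_* - f_*$, the required chain homotopy.

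Finally I would read off the homology statements. For any cycle $c \in \Omega_*(P)$ one has $g_* c - f_* c = \partial h(c) + h(\partial c) = \partial(h c)$, a boundary, so $f_* = g_*$ on $H_*(P)$. For a weak homotopy equivalence given by $f_\circ, g_\circ$ with $f_\circ \circ g_\circ \sim \mathrm{id}$ and $g_\circ \circ f_\circ \sim \mathrm{id}$, functoriality together with the equality just proved gives $f_* \circ g_* = (f_\circ \circ g_\circ)_* = \mathrm{id}$ on $H_*(S)$ and $g_* \circ f_* = \mathrm{id}$ on $H_*(P)$; thus $f_*$ and $g_*$ are mutually inverse isomorphisms and $H_*(P) \cong H_*(S)$.
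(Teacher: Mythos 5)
Your proposal is correct, and it takes essentially the approach of the source the paper defers to: the paper gives no proof of Theorem \ref{homotopy2} itself, citing \citet{grigor2019homology}, and your argument --- reduction to a single weak one-step homotopy by transitivity of chain homotopy, factoring $f_\circ$ and $g_\circ$ through the cylinder $\left(V \times I, P \times I\right)$, the prism operator $\tau$ satisfying $\partial \tau + \tau \partial = i_{1*} - i_{0*}$, and transporting along $F_*$ --- is precisely the proof given there. In particular, your bootstrapping step showing $\tau\left(\Omega_n\left(P\right)\right) \subseteq \Omega_{n+1}\left(P \times I\right)$ via the prism identity, rather than merely $\tau\left(\mathcal{A}_n\right) \subseteq \mathcal{A}_{n+1}$, is exactly the point where the path-homology argument genuinely differs from the classical simplicial prism, and you have handled it correctly.
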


\begin{proposition}
    For a directed graph map $f\in \operatorname{Mor}_{\mathbf{Dgr}}[G,H] $, let $P(f):=f_\circ$, then $P:\mathbf{Dgr}\to \mathbf{PC} $ is a functor.
\end{proposition}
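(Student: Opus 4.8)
The plan is to verify the two defining properties of a functor: that $P$ carries each arrow of $\mathbf{Dgr}$ to a genuine arrow of $\mathbf{PC}$ (well-definedness on morphisms), and that it respects identities and composition. Well-definedness on objects requires no new work, since the path complex $P(G)$ generated by a directed graph $G$ was constructed above, with $P_0(G)=V$ and $P_n(G)=\{i_0\ldots i_n \mid i_{k-1}i_k \in E \text{ for all } k\}$.

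The substantive step, and the one I expect to demand the most care, is checking that for a directed graph map $f\colon G\to H$ the pair $P(f)=f_\circ=(f,f_*)$ really is a weak morphism $(V_G,P(G))\to(V_H,P(H))$; that is, that $f_*$ sends every allowed path of $P(G)$ either into $P(H)$ or onto a non-regular path. I would take an allowed elementary path $i_0\ldots i_n\in P(G)$, which by the description of $P(G)$ means $i_{k-1}i_k\in E_G$ for every $k$. Since $f_*(e_{i_0\ldots i_n})=e_{f(i_0)\ldots f(i_n)}$ and the directed graph map condition yields $f(i_{k-1})\stackrel{\smash{\scriptscriptstyle\rightarrow}}{=}f(i_k)$ for each $k$, I would split into two cases. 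If some consecutive pair collapses, $f(i_{k-1})=f(i_k)$, then the image path repeats a vertex and is therefore non-regular, so the condition is met. If no pair collapses, then for each $k$ we have $f(i_{k-1})\neq f(i_k)$ together with $f(i_{k-1})\stackrel{\smash{\scriptscriptstyle\rightarrow}}{=}f(i_k)$, forcing $f(i_{k-1})\to f(i_k)\in E_H$; hence every consecutive pair of $f(i_0)\ldots f(i_n)$ is an edge of $H$, and by the defining description of $P(H)$ the image path lies in $P(H)$. In either case the weak morphism requirement holds, so $P(f)\in\operatorname{Mor}_{\mathbf{PC}}[P(G),P(H)]$.

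Identity preservation is immediate: the induced map $(\operatorname{id}_G)_*$ fixes each basis path $e_{i_0\ldots i_n}$, so $P(\operatorname{id}_G)=(\operatorname{id}_G,(\operatorname{id}_G)_*)=\operatorname{id}_{P(G)}$ as a weak morphism. For composition, given $f\colon G\to H$ and $g\colon H\to K$, the definition of composition of weak morphisms gives $P(g)\circ P(f)=g_\circ\circ f_\circ=(g\circ f,\,g_*\circ f_*)$, whereas $P(g\circ f)=((g\circ f),\,(g\circ f)_*)$. It therefore suffices to verify $(g\circ f)_*=g_*\circ f_*$ on $\Lambda_*$, which is the functoriality of the induced map construction; I would check it directly on basis elements, since both sides send $e_{i_0\ldots i_n}$ to $e_{g(f(i_0))\ldots g(f(i_n))}$. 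This gives $P(g\circ f)=P(g)\circ P(f)$ and completes the verification that $P\colon\mathbf{Dgr}\to\mathbf{PC}$ is a functor. The only genuinely non-formal ingredient is the case analysis in the second paragraph; the remaining axioms reduce to the bookkeeping already packaged into the definitions of composition in $\mathbf{Dgr}$ and $\mathbf{PC}$.
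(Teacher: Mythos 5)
Your proposal is correct and follows essentially the same route as the paper: the substantive step in both is that for an allowed path $i_0\ldots i_n\in P(G)$ the directed graph map condition gives $f(i_{k-1})\stackrel{\smash{\scriptscriptstyle\rightarrow}}{=}f(i_k)$ for all $k$, so the image is either non-regular (some consecutive pair collapses) or an allowed path of $P(H)$ (every consecutive pair is an edge). Your additional explicit checks of identity preservation and $(g\circ f)_*=g_*\circ f_*$ on basis elements are routine bookkeeping the paper leaves implicit, and they are carried out correctly.
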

\begin{proof}
    For a directed graph map $f\in \operatorname{Mor}_{\mathbf{Dgr}}[G,H] $ and $p=e_{i_0\ldots i_n }\in P(G)$, we have $f(i_{k-1}) \stackrel{\smash{\scriptscriptstyle\rightarrow}}{=} f(i_k)\in H, k=1,\ldots,n$, thus either $f_*(p)\in P(H)$ or $f_*(p)$ is irregular. Hence, $P(f)\in \operatorname{Mor}_{\mathbf{PC}}[P(G),P(H)]$.
\end{proof}

\section{Persistent Modules and Persistent Homology}

\subsection{Persistent Diagram}

We use $\mathbb{R}$ as the index set.

We denote $\overline{\mathbb{R}}$ as the extended real numbers $\left[-\infty, +\infty\right]$.
% and equip it with the $L^\infty$ norm. We also agree on $+\infty-\left(+\infty\right)=0,-\infty-\left(-\infty\right)$.

We use $\mathbf{R}$ to denote the partially ordered set $\mathbb{R}$ equipped with the $\leq$ relation, and consider it as a category.

\begin{definition}[Persistent Vector Space]
    We denote by ${}_{\mathbb{K}} \mathbf{Vec}$ the category of $\mathbb{K}$-vector spaces. A persistent vector space is a functor $\mathbf{R} \rightarrow {}_{\mathbb{K}} \mathbf{Vec}$.
    The morphisms of persistent vector spaces are morphisms in the category $\left[\mathbf{R} , {}_{\mathbb{K}} \mathbf{Vec} \right]$. That is, for $\mathcal{M}, \mathcal{N} \in\left[\mathbf{R} , {}_{\mathbb{K}} \mathbf{Vec} \right]$, a morphism $\phi: \mathcal{M} \rightarrow \mathcal{N}$ consists of a family of linear maps $\left\{ \phi_{t}: \mathcal{M}\left(t\right) \rightarrow \mathcal{N}\left(t\right)\right\} $ such that
    \begin{align}
        \mathcal{N}\left(s \leq t\right) \circ \phi_{s}=\phi_{t} \circ \mathcal{M}\left(s \leq t\right)
    \end{align}
    holds for all $s \leq t$.

    We denote the category of persistent vector spaces as ${}_{\mathbb{K}} \mathbf{PersVec} :=\left[\mathbf{R}, {}_{\mathbb{K}} \mathbf{Vec}\right]$.
\end{definition}

In this paper, unless otherwise stated, persistent modules are indexed by $\mathbb{N}$, while persistent vector spaces are indexed by $\mathbb{R}$.

\begin{definition}[Finite Persistent Vector Space]
    We call a persistent vector space $ \mathcal{M}: \mathbf{R} \rightarrow {}_{\mathbb{K}}  \mathbf{Vec}$ finite if $ \mathcal{M}\left(t\right)$ is finite-dimensional for all $t \in \mathbb{R}$ and there exist only finitely many $t \in \mathbb{R}$ such that $ \mathcal{M}\left(t-\epsilon \leq t+\epsilon\right)$ is not isomorphic for any $\epsilon>0$. We call these $t$ critical values.

    We denote the full subcategory of ${}_{\mathbb{K}} \mathbf{PersVec}$ consisting of finite persistent vector spaces as ${}_{\mathbb{K}} \mathbf{Persvec}$.
\end{definition}

\begin{definition}[Interval Vector Space]
    For an interval $I \subseteq R$, we define the corresponding interval vector space $P_{\mathbb{K}} \left(I\right) \in {}_{\mathbb{K}} \mathbf{Persvec}$ as:
    \begin{align}
        P_{\mathbb{K}} \left(I\right)\left(t\right):=\begin{cases}\mathbb{K} & \text { if } t \in I \\0 & \text { otherwise}
                                                     \end{cases}
    \end{align}
    If $s, t \in I$, then $P_{\mathbb{K}} \left(I\right)\left(s \leq t\right)$ is the identity map; otherwise, it is the trivial map.
\end{definition}

We note that when a persistent vector space is finite, there exist only finitely many critical values $t_{0}<\cdots<t_{k}$, such that if $t_{i-1}<s \leq t<t_{i}$, then $ \mathcal{M}\left(s \leq t\right)$ is isomorphic. Therefore, all information of finite persistent vector spaces is contained in the maps $ \mathcal{M}\left(t_{i-1} \leq t_{i}\right)$ for $i=1, \ldots, k$. Thus, we can consider it as a functor $\left[k\right] \rightarrow{}_{\mathbb{K}}  \mathbf{{Vec}}$, where $\left[k\right]$ is the partially ordered subset of $(\mathbb{N},\le )$ consisting of the integers $0, \ldots, k$. In this case, we can view it as a finite persistent vector space.

Therefore, under the finite condition, the correspondences and structure theorems mentioned previously for the discrete version can be similarly extended to the continuous version and summarized as follows:
\begin{theorem}[\citet{chazal2016structure}]
    Given $\mathcal{M} \in{}_{\mathbb{K}} \mathbf{Persvec}$, there exists a multiset $B(\mathcal{M})$ of intervals in $\mathbb{R}$, such that
    \begin{align}
        \mathcal{M} \cong \bigoplus_{I \in B(\mathcal{M}) } P_{\mathbb{K}} \left(I\right)
    \end{align}
    and this decomposition is unique up to ordering.
\end{theorem}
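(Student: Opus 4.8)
The plan is to reduce the continuous statement to a finite-dimensional problem over a linearly ordered index set, and then to dispatch existence and uniqueness by separate arguments.

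First I would use the finiteness hypothesis exactly as in the paragraph preceding the theorem. Since $\mathcal{M}$ has only finitely many critical values $t_0 < \cdots < t_k$, and $\mathcal{M}(s \le t)$ is an isomorphism whenever $s$ and $t$ lie strictly between two consecutive critical values, all of the data of $\mathcal{M}$ is captured by the finite sequence of vector spaces and linear maps
\begin{align}
    \mathcal{M}(t_0) \to \mathcal{M}(t_1) \to \cdots \to \mathcal{M}(t_k),
\end{align}
together with the (constant) behaviour of $\mathcal{M}$ on each half-open stretch between critical values. Up to natural isomorphism this realizes $\mathcal{M}$ as a functor $[k] \to {}_{\mathbb{K}}\mathbf{Vec}$, i.e.\ a finite-dimensional representation of the linearly oriented type-$A$ quiver. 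Under this reduction the theorem becomes the purely linear-algebraic assertion that every such representation is a finite direct sum of interval representations, unique up to reordering. I would record once and for all a dictionary translating an index interval $\{i, \ldots, j\} \subseteq [k]$ back to a genuine interval $I \subseteq \mathbb{R}$, reading off the correct open or closed endpoints from the critical values $t_i$ and from the direction in which $\mathcal{M}$ stays constant.

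For existence I would build a compatible system of bases. Writing $f_{i}\colon \mathcal{M}(t_i) \to \mathcal{M}(t_{i+1})$ for the transition maps, the goal is to choose a basis of each $\mathcal{M}(t_i)$ so that every $f_i$ sends each basis vector either to a basis vector or to $0$; such a matched basis decomposes the representation into one interval summand per matched chain of basis vectors. The construction proceeds by a sweep adapted to the filtration of each $\mathcal{M}(t_i)$ by the images of the preceding compositions and the kernels of the succeeding ones, splitting off at each stage the vectors born at $t_i$ and those that die after $t_i$. Equivalently, one may simply invoke Gabriel's theorem: the indecomposable representations of the $A_{k+1}$ quiver are precisely the thin interval representations, and every finite-dimensional representation decomposes into indecomposables because each object has finite length. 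Translating the resulting index intervals back through the dictionary yields the claimed multiset $B(\mathcal{M})$ of intervals in $\mathbb{R}$.

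For uniqueness I would appeal to the Krull--Schmidt--Azumaya theorem. Each interval space $P_{\mathbb{K}}(I)$ is indecomposable, since a direct computation shows its endomorphism ring is $\mathbb{K}$, which is local; more generally, finite-dimensionality of all the $\mathcal{M}(t)$ forces every object to have finite length, and Fitting's lemma then makes the endomorphism ring of any indecomposable local. The category ${}_{\mathbb{K}}\mathbf{Persvec}$ is therefore a Krull--Schmidt category, so the decomposition of $\mathcal{M}$ into indecomposables is unique up to isomorphism and permutation of the summands; since distinct intervals give non-isomorphic $P_{\mathbb{K}}(I)$, this pins down the multiset $B(\mathcal{M})$. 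The main obstacle is the existence step: carrying out the simultaneous choice of matched bases across all $k+1$ vertices at once (equivalently, proving the interval classification of indecomposables) is where the real work lies, whereas the reduction and the uniqueness argument are essentially formal once finiteness is in hand.
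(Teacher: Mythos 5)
The paper offers no proof of this statement: it is imported verbatim from \citet{chazal2016structure}, and the only argument present is the reduction sketch in the paragraph immediately preceding the theorem (finitely many critical values, hence a functor $[k]\to{}_{\mathbb{K}}\mathbf{Vec}$). Your proposal follows that same reduction and then actually completes it, supplying existence via matched bases/Gabriel's classification of indecomposables for the linearly oriented $A_{k+1}$ quiver and uniqueness via Krull--Schmidt--Azumaya with the local-endomorphism-ring argument. This is the standard proof of the finite structure theorem, and your division of labor is sound: the interval classification of indecomposables is indeed where the real work lies, while the uniqueness step is formal once finite length is established.

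One concrete imprecision should be repaired, and it sits exactly in your reduction step. Sampling $\mathcal{M}$ only at the critical values $t_0<\cdots<t_k$ does not capture all of $\mathcal{M}$: for instance $\mathcal{M}=P_{\mathbb{K}}\left((t_0,t_1)\right)$ is a finite persistent vector space whose critical values are $t_0$ and $t_1$, yet $\mathcal{M}(t_0)=\mathcal{M}(t_1)=0$, so your displayed sequence $\mathcal{M}(t_0)\to\cdots\to\mathcal{M}(t_k)$ is identically zero and the functor $[k]\to{}_{\mathbb{K}}\mathbf{Vec}$ forgets the module entirely. More generally, the maps from $\mathcal{M}(t_i)$ into the constant stretch $(t_i,t_{i+1})$ and from that stretch into $\mathcal{M}(t_{i+1})$ are genuine extra data not determined by the composite $\mathcal{M}(t_i\le t_{i+1})$; your phrase ``together with the (constant) behaviour on each half-open stretch'' gestures at this but the formal quiver representation you decompose does not carry it. The standard fix is to sample at interleaved points --- one regular value inside each open interval between consecutive critical values, plus the critical values themselves --- yielding an $A_{2k+1}$-type representation from which $\mathcal{M}$ is reconstructed up to natural isomorphism; your endpoint dictionary then reads off open versus closed endpoints according to which adjacent samples lie in a given interval summand. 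Note that the paper's own preceding paragraph makes the identical simplification, so with this repair your proposal is correct and strictly more complete than what the paper provides.
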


Similarly, we consider the continuous version of the concept of characteristic intervals mentioned earlier.
\begin{definition}[Persistent Barcode]
    We call the multiset $B(\mathcal{M})$ given by the above theorem the persistent barcode of the finite persistent vector space $\mathcal{M}$, which consists of a multiset of intervals in $\mathbb{R}$.
\end{definition}

We can view intervals in $\mathbb{R}$ as points in $\overline{\mathbb{R}}^2$, leading to the following description.
\begin{definition}[Persistent Diagram]
    Given $\mathcal{M} \in{}_{\mathbb{K}} \mathbf{Persvec}$, we regard its persistent barcodes as a multiset in $\overline{\mathbb{R}}^2$, and call it a persistent diagram.
    \begin{align}
        D\left(\mathcal{M}\right):=\left\{ \left\{ \left(a_{k}, b_{k}\right)\in \overline{\mathbb{R}}^2 \mid I_{k} \in B(\mathcal{M}) \text { with endpoints } a_{k} \leq b_{k}\right\} \right\}.
    \end{align}

\end{definition}

\subsection{Filtration}

The complex networks we consider are often represented by weighted directed graphs.

\begin{definition}[Weighted Directed Graph]
    A weighted directed graph is a triple $G=\left(V, E, w\right)$, where $\left(V, E\right)$ is a directed graph and $w: E \rightarrow \mathbb{R}_{>0}$ is a positive function on edges.
\end{definition}

Given a weighted directed graph $G$, we can consider the simplicial complex $P(G)$ generated by the directed graph $G$, while retaining the weights of the edges of the weighted directed graph $G$. Thus, we obtain a simplicial complex $P(G)$ with edge weights.

However, in practical applications, we may not be interested in all the higher-order structures in $P(G)$. For example, if we only want to deal with structures of order up to 3, we should consider $P(G)_0\cup P(G)_1\cup P(G)_2 $ instead of $P(G) $ . Also, if we are only interested in certain specific forms of structures, we may make finer selections of paths in $P(G)$. Moreover, the representations of some complex networks that contain higher-order structures themselves are simplicial complexes rather than directed graphs. Therefore, we should also consider methods to establish persistent path homology given an edge-weighted simplicial complex $P(G)$.

Given a weighted directed graph, or more generally, given an edge-weighted simplicial complex, to establish persistent path homology, we use the weights to construct a scale parameter and generate a family of simplicial complexes indexed by the scale parameter using the concept of a filtration map.

We use the prefix $\mathbf{W}$ to denote the respective category with edge weights, where morphisms are morphisms in the underlying category without edge weights. For example, $\mathbf{WDgr}$ denotes the category of weighted directed graphs, where morphisms are directed graph mappings. Similarly, $\mathbf{WPC}$ denotes the category of edge-weighted simplicial complexes, where morphisms are weak maps of simplicial complexes.

\begin{definition}[Filtration Map]
    A filtration map is a map $F: \operatorname{Obj}\left(\mathbf{WDgr}\right) \rightarrow \operatorname{Obj}\left(\left[\mathbf{R}, \mathbf{Dgr}\right]\right)$ or $F: \operatorname{Obj}\left(\mathbf{WPC}\right) \rightarrow \operatorname{Obj}\left(\left[\mathbf{R}, \mathbf{PC}\right]\right)$ such that $V\left(F^{t} S\right) \subseteq V\left(S\right)$ holds for all $t \in \mathbb{R}$.

    We denote $F^{t} S:=F\left(S\right)\left(t\right)$ as the image of $t$ under the functor $F\left(S\right)$, and $F\left(S\right)\left(s \leq t\right)$ as the image of $s \leq t$ under the functor $F\left(S\right)$.
\end{definition}

Below, we present several filtration maps considered in this paper.

\begin{definition}[Edge Level Filtration]
    Given a weighted directed graph
    \begin{align}
        G=\left\{V,E,w\right\}
    \end{align}
    Define $G^\delta=\left\{V^\delta,E^\delta,w^\delta\right\},\forall \delta\in \mathbb{R}$ as follows.
    \begin{align}
        V^\delta & = V \notag                                                 \\
        E^\delta & = \left\{e\in E| w\left(e\right)\le \delta \right\} \notag \\
        w^\delta & = w|_{E^\delta}
    \end{align}
    Also, denote $P^\delta=P\left(G^\delta\right)$. We denote this filtration map as $F_e:G\mapsto \left\{G^\delta\right\}_{\delta\in \mathbb{R}}$.
\end{definition}

\begin{definition}[Length of a Path]
    Given fixed edges $E$ and their edge weights $w:E\to \mathbb{R}_{>0} $, and a path $p=i_0\ldots i_n$, the length of $p$ is defined as
    \begin{align}
        \operatorname{len}\left(p\right):=\sum_{i=0}^{n} w\left(e_{i}\right)
    \end{align}
\end{definition}

\begin{definition}[Path Sublevel Filtration]
    Given an edge-weighted path complex
    \begin{align}
        P=\left\{\left\{P_n\right\}_{n=0}^\infty,w:P_1\to \mathbb{R}_{>0}\right\}
    \end{align}
    consider defining
    \begin{align}
        P^\delta=\left\{V^\delta=P_0^\delta,E^\delta=P_1^\delta,\left\{P_n^\delta\right\}_{n=0}^\infty,w^\delta\right\},\forall \delta\in \mathbb{R}
    \end{align}
    \begin{align}
        P_0^\delta & = P_0^\delta \notag                                                                 \\
        P_n^\delta & = \left\{p\in P_n|\operatorname{len}\left(P\right)\le \delta \right\},n\ge 1 \notag \\
        w^\delta   & = w|_{P_1^\delta}
    \end{align}
    It can be directly verified that $P^\delta\subset P\left(P_1^\delta\right) $, and $P^\delta$ indeed forms a path complex. We denote this filtration map as $F_p:P\mapsto \left\{P^\delta \right\}_{\delta\in \mathbb{R}}$.
\end{definition}

\subsection{Persistent Path Homology}
Now we establish persistent path homology.

Recalling that given a chain complex $C_* \in \mathbf{Ch}$, we use $C_{k}$ to denote the $k$th chain group, and $\partial_{k}: C_{k} \rightarrow C_{k-1}$ to denote the boundary operator. For $k \in \mathbb{N}$, the $k$th homology group $H_{k}\left(C_*\right)$ is the quotient
\begin{align}
    H_{k}\left(C_*\right):={\operatorname{Ker} \partial_{k}}/{\operatorname{Im} \partial_{k+1}}
\end{align}

Considering chain complexes $C_*$ and $C_*^\prime$, equipped with boundary operators $\partial,\partial^\prime $ and having $k$th homology groups $H_k(C_*),H_k(C_*^\prime)$, for a chain map $f:C_*\to C_*^\prime$, we also define
\begin{align}
    H_k(f):=f_k|_{H_k(C_*)}
\end{align}
\begin{proposition}[\citet{hatcher2005algebraic} ]
    $H_{k}:\mathbf{Ch} \rightarrow\mathbf{Vec}$ is a functor.
    \label{fun_ch_vec}
\end{proposition}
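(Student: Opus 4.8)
The plan is to verify the two functor axioms after confirming that the object and morphism assignments are well-defined. On objects, $H_{k}(C_*)=\operatorname{Ker}\partial_{k}/\operatorname{Im}\partial_{k+1}$ is manifestly a $\mathbb{K}$-vector space, being a quotient of a subspace of $C_{k}$, so the object assignment lands in $\mathbf{Vec}$ with no further work. The substance lies entirely in the morphism assignment $H_{k}(f):=f_{k}|_{H_{k}(C_*)}$, which must be shown to descend to a well-defined $\mathbb{K}$-linear map on the quotient; once that is established, the identity and composition axioms reduce to their chain-level counterparts.

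First I would check that a chain map sends cycles to cycles and boundaries to boundaries. A chain map $f:C_*\to C_*^\prime$ satisfies the commutation relation $\partial_{k}^\prime\circ f_{k}=f_{k-1}\circ\partial_{k}$ for every $k$. If $x\in\operatorname{Ker}\partial_{k}$, then $\partial_{k}^\prime(f_{k}(x))=f_{k-1}(\partial_{k}(x))=0$, so $f_{k}(x)\in\operatorname{Ker}\partial_{k}^\prime$; hence $f_{k}$ restricts to a map $\operatorname{Ker}\partial_{k}\to\operatorname{Ker}\partial_{k}^\prime$. Likewise, if $x=\partial_{k+1}(y)\in\operatorname{Im}\partial_{k+1}$, then $f_{k}(x)=f_{k}(\partial_{k+1}(y))=\partial_{k+1}^\prime(f_{k+1}(y))\in\operatorname{Im}\partial_{k+1}^\prime$, so $f_{k}$ carries $\operatorname{Im}\partial_{k+1}$ into $\operatorname{Im}\partial_{k+1}^\prime$. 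By the universal property of quotient vector spaces, $f_{k}$ therefore induces a unique $\mathbb{K}$-linear map $H_{k}(f):H_{k}(C_*)\to H_{k}(C_*^\prime)$ determined by $[x]\mapsto[f_{k}(x)]$, and linearity is inherited from $f_{k}$.

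It then remains to check the two functoriality axioms, both of which follow formally from the fact that chain-map composition is defined componentwise. For identities, $(\operatorname{id}_{C_*})_{k}=\operatorname{id}_{C_{k}}$ induces $[x]\mapsto[x]$, which is $\operatorname{id}_{H_{k}(C_*)}$. For composition, given chain maps $f:C_*\to C_*^\prime$ and $g:C_*^\prime\to C_*^{\prime\prime}$ one has $(g\circ f)_{k}=g_{k}\circ f_{k}$ at the chain level, so $H_{k}(g\circ f)([x])=[g_{k}(f_{k}(x))]=H_{k}(g)(H_{k}(f)([x]))$, giving $H_{k}(g\circ f)=H_{k}(g)\circ H_{k}(f)$. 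The only genuine obstacle is the well-definedness established above, namely the two containments $f_{k}(\operatorname{Ker}\partial_{k})\subseteq\operatorname{Ker}\partial_{k}^\prime$ and $f_{k}(\operatorname{Im}\partial_{k+1})\subseteq\operatorname{Im}\partial_{k+1}^\prime$, which are exactly what the chain-map commutation relation supplies; everything else is routine bookkeeping with quotients.
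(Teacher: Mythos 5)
Your proof is correct and is precisely the standard argument the paper delegates to its citation of Hatcher: you verify that the chain-map relation $\partial_k^\prime\circ f_k = f_{k-1}\circ\partial_k$ forces $f_k(\operatorname{Ker}\partial_k)\subseteq\operatorname{Ker}\partial_k^\prime$ and $f_k(\operatorname{Im}\partial_{k+1})\subseteq\operatorname{Im}\partial_{k+1}^\prime$, so that $H_k(f)$ descends to the quotient, after which the identity and composition axioms follow componentwise. Your well-definedness check is in fact a necessary clarification of the paper's shorthand $H_k(f):=f_k|_{H_k(C_*)}$, which only makes literal sense once those two containments are established.
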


We recall the pineline of persistent path homology under the seting of weighted digraph:
\begin{align}
    \mathbf{WDgr}\xrightarrow{F}\left[\mathbf{R}, \mathbf{Dgr}\right]\xrightarrow{\left[\mathbf{R}, P\right]}\left[\mathbf{R}, \mathbf{PC}\right] \xrightarrow{\left[\mathbf{R}, \Omega_*\right]}\left[\mathbf{R}, \mathbf{Ch}\right] \xrightarrow{\left[\mathbf{R}, H_{k}\right]}\left[\mathbf{R}, \mathbf{Vec}\right]\xrightarrow{D} \mathbf{Mult}\left(\overline{\mathbb{R}}^2\right)
\end{align}

Here we denote $D_k=D\circ\left[\mathbf{R}, H_{k}\right]\circ\left[\mathbf{R}, \Omega_*\right]\circ\left[\mathbf{R}, P\right]\circ F$.

For the case of a given edge-weighted path complex, we can skip the step of generating the edge-weighted path complex from the weighted directed graph:
\begin{align}
    \mathbf{WPC}\xrightarrow{F}\left[\mathbf{R}, \mathbf{PC}\right] \xrightarrow{\left[\mathbf{R}, \Omega_*\right]}\left[\mathbf{R}, \mathbf{Ch}\right] \xrightarrow{\left[\mathbf{R}, H_{k}\right]}\left[\mathbf{R}, \mathbf{Vec}\right]\xrightarrow{D} \mathbf{Mult}\left(\overline{\mathbb{R}}^2\right)
\end{align}

Here we denote $D_k=D\circ\left[\mathbf{R}, H_{k}\right]\circ\left[\mathbf{R}, \Omega_*\right]\circ F$.

Note that both the directed graphs and path complexes in this paper require that the vertex set $V$ be finite, so for any $n\in \mathbb{N}$, $\Lambda_n(V)$ is finite. Therefore, for any filtration map introduced in the previous subsection, the scale parameter is essentially a finite set, ensuring that the resulting persistent vector space is finite, thereby ensuring the calculation of persistent barcodes.

\section{Stability}

If we want to apply persistent homology to real data and complex networks, we naturally hope that persistent graphs, as summary quantities, are stable with respect to noise and disturbances in the input data. To achieve this, we first introduce the distance of persistent graphs and the interleaving property of persistent vector spaces. Then, we combine the homotopy theory of directed graphs and road complexes to establish stability results of persistent graphs with respect to the weights and structures of complex networks on weighted directed graphs and edge-weighted road complexes. This is also the main result of this paper.

\subsection{Bottleneck Distance}
To quantify stability, we need to establish a measure between persistence diagrams. In this paper, we use the bottleneck distance as mentioned by \citet{botnantopological}.

Let $D_1$ and $D_2$ be multisets of points in $\overline{\mathbb{R}}^{2}$. We define a matching between $D_1$ and $D_2$ as $\chi= \left\{ \left(I, J\right) \in D_1 \times D_2 \right\}$, where each $I$ and each $J$ can appear in at most one pair. Similarly, a matching is a bijection between a subset of $D_1$ and a subset of $D_2$. If $\left(I, J\right) \in \chi$, we say $I$ matches with $J$. If $I$ does not appear in such a pair, we say $I$ does not match.

The matching cost $c\left(I, J\right)$ between $I=\left( a, b\right)$ and $J=\left( c, d\right)$ is defined as
\begin{align}
    c\left(I, J\right)=\max \left(|c-a|,|d-b|\right),
\end{align}
and the cost $c\left(I\right)$ for unmatched $I$ is defined as
\begin{align}
    c\left(I\right)=\left(b-a\right) / 2
\end{align}
Given this, we define the cost of matching $\chi$ as
\begin{align}
    c\left(\chi\right):=\max \left(\sup _{\left(I, J\right) \in \chi} c\left(I, J\right), \sup _{\text {unmatched } I \in D_1 \cup D_2} c\left(I\right)\right)
\end{align}
If $c\left(\chi\right) \leq \epsilon$, we say $\chi$ is an $\epsilon$-matching.

A geometric interpretation of this is as follows: an $\epsilon$-matching pairs points $p$ and $q$, corresponding to elements in $D_1$ and $D_2$ respectively, such that $\|p-q\|_{\infty} \leq \epsilon$, and ensures that any unmatched point is at most $\epsilon$ away from the diagonal in the $l^{\infty}$ norm. From a matching perspective, we can define the following distance for multisets in extended $\mathbb{R}^2$.
\begin{definition}[Bottleneck Distance]
    The bottleneck distance between multisets $D_1$ and $D_2$ is
    \begin{align}
        d_{B}\left(D_1, D_2\right)=\inf \left\{ c\left(\chi\right): \chi \text { is a matching between } D_1 \text { and } D_2 \right\}
    \end{align}
\end{definition}

By merging multisets with infinitely many repetitions of the diagonal, and considering matching as bijections between multisets, we can give the following equivalent definition:

\begin{definition}[Bottleneck Distance]
    Given two multisets $D_{1}, D_{2} \subseteq \overline{\mathbb{R}}^{2}$, the bottleneck distance is
    \begin{align}
        d_{B}\left(D_{1}, D_{2}\right):=\inf _{\gamma} \sup _{x \in D_{1} \cup \Delta^\infty}\|x-\gamma\left(x\right)\|_{\infty}
    \end{align}
    where $\gamma$ ranges over all multisets bijections $D_{1} \cup \Delta^\infty \rightarrow D_{2} \cup \Delta^\infty$, $\Delta= \left\{ \left(x, x\right) \mid x \in \overline{\mathbb{R}} \right\} $ is the diagonal, and $\Delta^\infty$ is the diagonal with infinitely many repetitions.
\end{definition}

\subsection{Interleaving and Stability of Persistent Vector Spaces}
The bottleneck distance involves bijections between multisets, which are difficult to control, and the computation of persistence diagrams itself involves the decomposition of homology groups, making it computationally complex. Here we refer to the interleaving technique used by \citet{chazal2009proximity}, which transforms into considering homomorphisms between homology groups.
\begin{definition}[Interleaving]
    For persistent vector spaces $\mathcal{F}$ and $\mathcal{G}$, if there exist families of morphisms
    \begin{align}
        u_{\alpha}: F_{\alpha} \rightarrow G_{\alpha+\epsilon} ,\alpha \in \mathbb{R} \\
        v_{\alpha}: G_{\alpha} \rightarrow F_{\alpha+\epsilon} ,\alpha \in \mathbb{R}
    \end{align}
    such that \ref{fig:sil} commutes for all $\alpha \leq \alpha^{\prime} \in \mathbb{R}$, then these two persistent vector spaces are said to be $\epsilon$-interleaved.
\end{definition}
\begin{figure}
    \centering
    \includegraphics[width=0.8\linewidth]{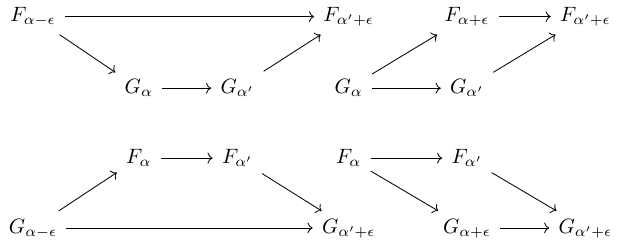}
    \caption{Commutative diagrams to be satisfied for interleaving}
    \label{fig:sil}
\end{figure}

Based on the concept of interleaving, \citet{chazal2009proximity} defined the following distance for persistent vector spaces.
\begin{definition}
    The interleaving distance between two persistent vector spaces $\mathcal{F}$ and $\mathcal{G}$ is
    \begin{align}
        d_{I}\left(\mathcal{F}, \mathcal{G}\right)=\inf \left\{ \epsilon |\mathcal{F}, \mathcal{G}\  \epsilon \text {-interleaved} \right\}
    \end{align}
\end{definition}

Based on this, we have an important algebraic stability theorem in persistent homology.
\begin{theorem}[\citet{bauer2014induced}]
    \label{stab}
    For finite persistent vector spaces $\mathcal{F}$ and $\mathcal{G}$, we have
    \begin{align}
        d_{B}\left(D (\mathcal{F}), D (\mathcal{G})\right)=d_{I}\left(\mathcal{F}, \mathcal{G}\right)
    \end{align}
\end{theorem}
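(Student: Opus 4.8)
The plan is to establish the equality by proving the two inequalities $d_B(D(\mathcal{F}),D(\mathcal{G})) \le d_I(\mathcal{F},\mathcal{G})$ and $d_I(\mathcal{F},\mathcal{G}) \le d_B(D(\mathcal{F}),D(\mathcal{G}))$ separately, following the induced-matching approach of \citet{bauer2014induced}. The equality of these two distances, the so-called isometry theorem, splits naturally into an algebraic stability half and a converse-stability half.

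First I would handle the converse inequality $d_I \le d_B$, which is the more transparent of the two. By the structure theorem, decompose $\mathcal{F} \cong \bigoplus_{I \in B(\mathcal{F})} P_{\mathbb{K}}(I)$ and $\mathcal{G} \cong \bigoplus_{J \in B(\mathcal{G})} P_{\mathbb{K}}(J)$. Fix any $\epsilon$-matching realizing a cost close to $d_B$. For each matched pair $(I,J)$ whose endpoints differ by at most $\epsilon$ in the $\ell^\infty$ sense, I would write down explicit shift morphisms $u_\alpha\colon P_{\mathbb{K}}(I)(\alpha) \to P_{\mathbb{K}}(J)(\alpha+\epsilon)$ and $v_\alpha$ in the reverse direction, using that intervals with $\epsilon$-close endpoints overlap enough for the interleaving squares to commute; this exhibits an $\epsilon$-interleaving between the two interval modules. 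For each unmatched interval $I$, which by the cost formula lies within $\epsilon$ of the diagonal (equivalently has length at most $2\epsilon$), I would give an $\epsilon$-interleaving of $P_{\mathbb{K}}(I)$ with the zero module. Since a direct sum of $\epsilon$-interleavings is again an $\epsilon$-interleaving, assembling these pieces along the matching yields an $\epsilon$-interleaving of $\mathcal{F}$ and $\mathcal{G}$; taking the infimum over $\epsilon$ gives $d_I \le d_B$.

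The harder direction is the stability inequality $d_B \le d_I$, and this is where the real work lies. The key device is the canonical partial matching $\chi_\phi$ that any morphism $\phi\colon \mathcal{M} \to \mathcal{N}$ of finite persistent vector spaces induces between $B(\mathcal{M})$ and $B(\mathcal{N})$. I would construct $\chi_\phi$ first in the two extreme cases: for a monomorphism one shows the induced matching covers $B(\mathcal{M})$ and only lowers birth times, and dually for an epimorphism it covers $B(\mathcal{N})$ and only raises death times; for a general $\phi$ one factors through the image as $\mathcal{M} \twoheadrightarrow \operatorname{im}\phi \hookrightarrow \mathcal{N}$ and composes the two matchings. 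The crucial quantitative lemma is that the matching induced by an internal shift map $\mathcal{M}(\cdot \le \cdot + \delta)$ pairs every interval of length exceeding $\delta$ with itself and leaves unmatched only intervals of length at most $\delta$.

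To conclude, given an $\epsilon$-interleaving with morphisms $u_\alpha\colon F_\alpha \to G_{\alpha+\epsilon}$ and $v_\alpha\colon G_\alpha \to F_{\alpha+\epsilon}$, I would apply the induced-matching construction to $u$, obtaining a matching between $B(\mathcal{F})$ and the $\epsilon$-shifted barcode of $\mathcal{G}$. The interleaving identities $v_{\alpha+\epsilon}\circ u_\alpha = \mathcal{F}(\alpha \le \alpha+2\epsilon)$ and the symmetric relation for $\mathcal{G}$ feed into the quantitative lemma to force every unmatched interval to have length at most $2\epsilon$, hence to sit within $\epsilon$ of the diagonal, while matched endpoints move by at most $\epsilon$. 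Reading the barcodes back as points of $\overline{\mathbb{R}}^2$, this induced matching is an $\epsilon$-matching of $D(\mathcal{F})$ and $D(\mathcal{G})$, so $d_B \le \epsilon$ and the infimum yields $d_B \le d_I$. I expect the main obstacle to be precisely this quantitative bookkeeping: proving that the matching induced by $u$, combined with the interleaving identities, forces all unmatched bars to be short and all matched endpoints to move by at most $\epsilon$. This step requires tracking births and deaths carefully through the image factorization and is the genuinely nontrivial content, in contrast to the converse direction, which reduces cleanly to interval modules once the structure theorem is invoked.
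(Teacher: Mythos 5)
The paper itself offers no proof of this theorem: it is imported as a black box from \citet{bauer2014induced}, where it is the isometry theorem, and is only ever used in the stability proofs via Theorem \ref{stab}. Your proposal is a correct sketch of exactly that cited source's argument --- the easy direction $d_I \le d_B$ by decomposing via the structure theorem and assembling summand-wise interleavings along a matching, and the hard direction $d_B \le d_I$ via induced matchings constructed through the epi--mono image factorization, with the quantitative lemma on the internal shift map controlling unmatched bar lengths --- so it reconstructs the intended proof rather than departing from it.
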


\subsection{Stability Results Based on Weighted Directed Graphs}

\begin{definition}
    For weighted directed graphs $G$ and $H$, and a directed graph mapping $\varphi: G\rightarrow H$, define
    \begin{align}
        \operatorname{dis}\left(\varphi\right)=\max_{x\rightarrow x^\prime \in G \atop \varphi\left(x\right)\rightarrow \varphi\left(x^\prime\right)\in H}|w_H\left(\varphi\left(x\right),\varphi\left(x^\prime\right)\right)-w_G\left(x,x^\prime\right)|
    \end{align}
    where $\max_\emptyset=0$.
\end{definition}
\begin{definition}
    For weighted directed graphs $G$ and $H$, and directed graph mappings $\varphi,\psi: G\rightarrow H$, define
    \begin{align}
        \operatorname{cod}\left(\varphi,\psi\right)=\max\left\{\max_{\varphi\left(x\right)\rightarrow\psi\left(x\right) \in H}|{w}_H\left(\varphi\left(x\right),\psi\left(x\right)\right)|,\max_{\psi\left(x\right)\rightarrow\varphi\left(x\right) \in H}|{w}_H\left(\psi\left(x\right),\varphi\left(x\right)\right)|\right\}
    \end{align}
    where $\max_\emptyset=0$.
\end{definition}
\begin{theorem}%[Stability Results Based on Weighted Directed Graphs]
    For weighted directed graphs
    \begin{align}
        G=\left(V_G,E_G, w_{G}\right),H=\left(V_H,E_H, w_{H}\right)
    \end{align}
    with the filtration mapping taken as edge sublevel filtration $F_e$. If $G,H$ are homotopy equivalent as directed graphs, i.e., there exist directed graph mappings
    \begin{align}
        \varphi\in \operatorname{Mor}_{ \operatorname{Dgr}}\left(G,H\right),\psi\in \operatorname{Mor}_{ \operatorname{Dgr}}\left(H,G\right)
    \end{align}
    satisfying
    \begin{align}
        \psi\circ\varphi=f_0\simeq_1\ldots\simeq_1f_m=\operatorname{id}_G, \varphi\circ\psi=g_0\simeq_1\ldots\simeq_1g_m=\operatorname{id}_H
    \end{align}
    then:
    \begin{align}
        d_{\mathrm{B}}\left(D_{p}\left(G\right), D_{p}\left(H\right)\right) \leq
         & \max \bigg\{
        \operatorname{dis}\left(\varphi\right),\operatorname{dis}\left(\psi\right),\notag                                                                               \\
         & \frac{1}{2}\max_{k=1,\ldots, m-1}\operatorname{dis}\left(f_k\right),\frac{1}{2}\max_{k=1,\ldots, m-1}\operatorname{dis}\left(g_k\right),\notag               \\
         & \frac{1}{2}\max_{k=1,\ldots, m} \operatorname{cod}\left(f_{k-1},f_k\right),\frac{1}{2}\max_{k=1,\ldots, m}\operatorname{cod}\left(g_{k-1},g_k\right) \bigg\}
    \end{align}
    holds for any $p \in \mathbb{N}$.
\end{theorem}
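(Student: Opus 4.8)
The plan is to replace the bottleneck distance by the interleaving distance via Theorem~\ref{stab}, and then to construct an explicit interleaving out of the homotopy data. Write $\mathcal{F} := [\mathbf{R},H_p]\circ[\mathbf{R},\Omega_*]\circ[\mathbf{R},P]\circ F_e(G)$ and $\mathcal{G} := [\mathbf{R},H_p]\circ[\mathbf{R},\Omega_*]\circ[\mathbf{R},P]\circ F_e(H)$ for the two persistent vector spaces, so that $D_p(G)=D(\mathcal{F})$ and $D_p(H)=D(\mathcal{G})$; since the vertex sets are finite, $\mathcal{F},\mathcal{G}\in {}_{\mathbb{K}}\mathbf{Persvec}$. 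By Theorem~\ref{stab} it suffices to show that $\mathcal{F}$ and $\mathcal{G}$ are $\epsilon$-interleaved, where $\epsilon$ denotes the right-hand side of the asserted inequality.

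First I would promote $\varphi$ and $\psi$ to interleaving morphisms. If $x\to x'$ is an edge of $G^\delta$ then $w_G(x,x')\le\delta$, and whenever $\varphi(x)\to\varphi(x')$ survives in $H$ its weight is at most $\delta+\operatorname{dis}(\varphi)$; hence $\varphi$ restricts to a directed graph map $G^\delta\to H^{\delta+\operatorname{dis}(\varphi)}$ for every $\delta$. Applying the functors $P$, $\Omega_*$, $H_p$ and post-composing with a structure map of $\mathcal{G}$ (allowed because $\operatorname{dis}(\varphi)\le\epsilon$) produces $u_\alpha:\mathcal{F}(\alpha)\to\mathcal{G}(\alpha+\epsilon)$; symmetrically $\psi$ yields $v_\alpha:\mathcal{G}(\alpha)\to\mathcal{F}(\alpha+\epsilon)$ using $\operatorname{dis}(\psi)\le\epsilon$. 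The commuting squares of the interleaving hold by naturality of these induced maps with respect to the filtration structure maps, so the only substantive points are the triangle identities $v_{\alpha+\epsilon}\circ u_\alpha=\mathcal{F}(\alpha\le\alpha+2\epsilon)$ and $u_{\alpha+\epsilon}\circ v_\alpha=\mathcal{G}(\alpha\le\alpha+2\epsilon)$.

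The crux is a filtered refinement of Theorem~\ref{homotopy1}: if $f\simeq_1 g:G\to G$ are one-step homotopic directed graph self-maps with $\operatorname{dis}(f),\operatorname{dis}(g)\le s$ and $\operatorname{cod}(f,g)\le s$, then $f$ and $g$ induce the same endomorphism $\mathcal{F}(\alpha)\to\mathcal{F}(\alpha+s)$ (and likewise for self-maps of $H$ and $\mathcal{G}$). I would establish this by transporting the prism/chain-homotopy construction that underlies Theorem~\ref{homotopy1} into the filtered setting: the homotopy $F:G\times I\to G$ yields a degree $+1$ operator built from the cylinder $G^\alpha\times I$, whose two families of side edges map under $f$ and $g$ to edges of weight at most $\alpha+\operatorname{dis}(f)$ and $\alpha+\operatorname{dis}(g)$, while each connecting edge $(x,0)\to(x,1)$ maps to $f(x)\to g(x)$ of weight at most $\operatorname{cod}(f,g)$. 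The hard part is exactly the weight bookkeeping that shows the $\partial$-invariant cylinder paths stay allowed in $P(G^{\alpha+s})$, i.e. that the homotopy operator lands in $\Omega_{*+1}(G^{\alpha+s})$; this is where the three bounds $\operatorname{dis}(f),\operatorname{dis}(g),\operatorname{cod}(f,g)\le s$ are spent, and it is purely a filtration-level estimate rather than a new homological argument.

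Finally I would chain this lemma along the given one-step homotopies with $s=2\epsilon$. Factoring $f_0=\psi\circ\varphi$ as $G^\alpha\xrightarrow{\varphi}H^{\alpha+\operatorname{dis}(\varphi)}\xrightarrow{\psi}G^{\alpha+\operatorname{dis}(\varphi)+\operatorname{dis}(\psi)}\hookrightarrow G^{\alpha+2\epsilon}$ identifies $v_{\alpha+\epsilon}\circ u_\alpha$ with the map induced by $f_0$ at shift $2\epsilon$. The bounds $\tfrac12\operatorname{dis}(f_k)\le\epsilon$ for $1\le k\le m-1$ and $\tfrac12\operatorname{cod}(f_{k-1},f_k)\le\epsilon$ for $1\le k\le m$ ensure that every intermediate $f_k$ has distortion $\le 2\epsilon$ and that every one-step homotopy $f_{k-1}\simeq_1 f_k$ meets the lemma's hypotheses at $s=2\epsilon$, the endpoints being covered by the factorization of $f_0$ and by $\operatorname{dis}(\operatorname{id}_G)=0$. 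Consequently the maps induced by $f_0,f_1,\dots,f_m$ all coincide at shift $2\epsilon$; since $f_m=\operatorname{id}_G$ induces precisely $\mathcal{F}(\alpha\le\alpha+2\epsilon)$, the first triangle commutes, and the $g_k$ chain gives the second identically. Thus $\mathcal{F}$ and $\mathcal{G}$ are $\epsilon$-interleaved, and Theorem~\ref{stab} yields $d_B(D_p(G),D_p(H))\le\epsilon$, as claimed.
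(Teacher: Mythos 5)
Your overall route is the paper's: reduce the bottleneck bound to an $\eta$-interleaving via Theorem~\ref{stab}, promote $\varphi,\psi$ to filtered digraph maps $\varphi^{\delta}\colon G^{\delta}\to H^{\delta+\eta}$ using the $\operatorname{dis}$ bounds, get the interleaving squares from strict vertex-level commutation with the inclusions plus functoriality, and obtain the triangle identities by pushing the chain of one-step homotopies through the filtration, spending $\operatorname{cod}$ on the connecting edges (your factorization of $f_{0}=\psi\circ\varphi$ through $H^{\delta+\operatorname{dis}(\varphi)}$, rather than a direct bound on $\operatorname{dis}(f_{0})$, matches the paper's use of $\psi^{\delta+\eta}\circ\varphi^{\delta}$). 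The one place you diverge is your ``crux lemma'': you propose to rebuild the prism/chain-homotopy operator over the cylinder $G^{\alpha}\times I$ and to verify by weight bookkeeping that the $\partial$-invariant cylinder paths stay allowed in $P(G^{\alpha+s})$ --- and you explicitly leave that verification open as ``the hard part.'' This detour is unnecessary: the one-step homotopy condition is purely digraph-level, so once you check (as you already do) that each connecting edge $f_{k-1}(x)\to f_{k}(x)$ has weight at most $\operatorname{cod}(f_{k-1},f_{k})\le 2\eta\le\delta+2\eta$, you have a genuine one-step homotopy $f_{k-1}^{\delta}\simeq_{1}f_{k}^{\delta}$ of digraph maps $G^{\delta}\to G^{\delta+2\eta}$, and the \emph{unfiltered} Theorem~\ref{homotopy1}, applied to the pair of digraphs $\left(G^{\delta},G^{\delta+2\eta}\right)$, gives equality of the induced maps on homology verbatim. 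That is exactly what the paper does; adopting it closes the step you flagged without any chain-level construction.

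There is one genuine omission: your argument is only valid for $\alpha>0$. An interleaving requires morphisms and triangle identities at \emph{every} index $\alpha\in\mathbb{R}$, and for $\alpha<0$ the connecting edges would need weight at most $\alpha+2\epsilon<2\epsilon$, which the hypothesis $\operatorname{cod}(f_{k-1},f_{k})\le 2\epsilon$ does not deliver; so your homotopy chaining, and hence your triangle identity, breaks on the negative axis. The paper treats this regime by a separate case: for $\delta\le 0$ positivity of the weights forces $E^{\delta}=\emptyset$, it asserts $H_{p}(G^{\delta})=H_{p}(H^{\delta})=0$, and it sets $u_{\delta}=v_{\delta}=0$. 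Any complete write-up of your proposal needs some such case analysis --- and note as a caution that this boundary regime is delicate even in the paper's own treatment, since for $p=0$ an edgeless graph on a nonempty vertex set has $H_{0}\neq 0$, so the vanishing claim used there requires more care than either your silence or the paper's one-line assertion provides.
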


\begin{proof}
    Let $\eta$ denote the right-hand side. We first consider the case where $\delta>0$.

    If $x\rightarrow x^{\prime} \in G^{\delta}$, we have $w_{G^\delta}\left(x, x^{\prime}\right) \leq \delta$.

    If $\varphi\left(x\right)\rightarrow \varphi\left(x^{\prime}\right)$, by the definition of $\eta$, we have
    \begin{align}
        w_{H}\left(\varphi\left(x\right), \varphi\left(x^{\prime}\right)\right)
        \leq & w_{G}\left(x, x^{\prime}\right)+|w_{G}\left(x, x^{\prime}\right)-w_{H}\left(\varphi\left(x\right), \varphi\left(x^{\prime}\right)\right)|  \notag \\
        \leq & \delta+\operatorname{dis}\left(\varphi\right) \notag                                                                                              \\
        \leq & \delta+\eta
    \end{align}

    Therefore, $\varphi\left(x\right)\rightarrow \varphi\left(x^{\prime}\right)\in H^{\delta+\eta}$, hence $\varphi^{\delta}:x \mapsto \varphi\left(x\right)\in \operatorname{Mor}_{ \operatorname{Dgr}} \left(G^{\delta}, H^{\delta+\eta}\right) $.

    Similarly, $\psi^{\delta}:y \mapsto \psi\left(y\right) \in \operatorname{Mor}_{ \operatorname{Dgr}} \left(H^{\delta}, G^{\delta+\eta}\right)$.

    For $\delta \leq \delta^{\prime} \in \mathbb{R}$, let $\iota_G^{\delta, \delta^{\prime}}:G^{\delta} \hookrightarrow G^{\delta^{\prime}}, \iota_H^{\delta, \delta^{\prime}}:H^{\delta} \hookrightarrow H^{\delta^{\prime}}$ denote the inclusion maps of directed graphs.

    For $x \in V_G^\delta$, we have $\varphi^{\delta^{\prime}}\left(\iota_G^{\delta, \delta^{\prime}}\left(x\right)\right)=\varphi^{\delta^{\prime}}\left(x\right)=\varphi\left(x\right)$.

    Where the second equality follows from the definition of $\varphi^{\delta^{\prime}}$, and the first equality is because $\iota_G^{\delta, \delta^{\prime}}$ is an inclusion map.

    Similarly, for $x \in V_H^\delta$, we have $\iota_H^{\delta+\eta, \delta^{\prime}+\eta}\left(\varphi^{\delta}\left(x\right)\right)=\iota_H^{\delta+\eta, \delta^{\prime}+\eta}\left(\varphi\left(x\right)\right)=\psi\left(x\right)$.

    Therefore, we have $\varphi^{\delta^{\prime}}\left(\iota_G^{\delta, \delta^{\prime}}\left(x\right)\right) =\iota_H^{\delta+\eta, \delta^{\prime}+\eta}\left(\varphi^{\delta}\left(x\right)\right),\forall x \in V_H^\delta$, hence $\varphi^{\delta^{\prime}} \circ \iota_G^{\delta, \delta^{\prime}}\simeq \iota_H^{\delta+\eta, \delta^{\prime}+\eta} \circ \varphi^{\delta}$.

    Similarly, $\psi^{\delta^{\prime}} \circ \iota_H^{\delta, \delta^{\prime}}\simeq \iota_G^{\delta+\eta, \delta^{\prime}+\eta} \circ \psi^{\delta}$.

    Considering the map induced by $f_k$ on directed graphs, denoted $f_k^\delta:G^\delta\rightarrow G^{\delta+2\eta}$.

    If $x\rightarrow x^{\prime} \in G^{\delta}$, we have $w_{G}\left(x, x^{\prime}\right) \leq \delta$.

    If $ f_k\left(x\right)\rightarrow f_k\left(x^{\prime}\right)$, by the definition of $\eta$, we have
    \begin{align}
        w_{G}\left(f_k\left(x\right), f_k\left(x^{\prime}\right)\right)
        \leq & w_{G}\left(x, x^{\prime}\right)+|w_{G}\left(x, x^{\prime}\right)-w_{G}\left(f_k\left(x\right), f_k\left(x^{\prime}\right)\right)| \notag \\
        \leq & \delta+\operatorname{dis}\left(f_k\right) \notag                                                                                         \\
        \leq & \delta+2\eta
    \end{align}

    Therefore, $f_k\left(x\right)\rightarrow f_k\left(x^{\prime}\right)\in G^{\delta+2\eta}$, hence $f_k^{\delta}:x \mapsto f_k\left(x\right)\in \operatorname{Mor}_{ \operatorname{Dgr}} \left(G^{\delta}, G^{\delta+2\eta}\right)$.

    Similarly, $g_k^{\delta}:y \mapsto g_k\left(y\right) \in \operatorname{Mor}_{ \operatorname{Dgr}} \left(H^{\delta}, H^{\delta+2\eta}\right)$.

    For $ x \in V_G^\delta$, since $f_{k-1}\simeq_1 f_k $, we have $f_{k-1}\left(x\right) \stackrel{\smash{\scriptscriptstyle\rightarrow}}{=} f_k\left(x\right)$ or $f_{k}\left(x\right) \stackrel{\smash{\scriptscriptstyle\rightarrow}}{=} f_{k-1}\left(x\right)$.

    If $f_{k-1}\left(x\right)\rightarrow f_{k}\left(x\right)$,
    due to
    \begin{align}
        w_{G}\left(f_{k-1}\left(x\right),f_{k}\left(x\right)\right)
        \leq \operatorname{cod} \left(f_{k-1},f_{k}\right)
        \leq 2\eta
        \leq\delta+2\eta
    \end{align}
    Therefore, $f_{k-1}^\delta\left(x\right) \stackrel{\smash{\scriptscriptstyle\rightarrow}}{=} f_{k}^\delta\left(x\right),\forall x\in V_G^\delta$, thus $\iota_G^{\delta, \delta+2 \eta}\simeq \psi^{\delta+\eta} \circ \varphi^{\delta}$.

    For the case of $f_{k}\left(x\right)\rightarrow f_{k-1}\left(x\right)$, similar discussion yields the same conclusion.

    Similarly, $\iota_H^{\delta, \delta+2 \eta}\simeq \varphi^{\delta+\eta} \circ \psi^{\delta}$.

    Considering the induced maps on the homology groups, by applying \ref{homotopy1}, for any $p \in \mathbb{N}$, we have:
    \begin{align}
        \left(\left(\iota_G^{\delta, \delta+2 \eta}\right)_{ * }\right)_{p}
         & =\left(\left(\psi^{\delta+\eta} \circ \varphi^{\delta}\right)_{ * }\right)_{p}                          \\
        \left(\left(\varphi^{\delta^{\prime}} \circ \iota_G^{\delta, \delta^{\prime}}\right)_{ * }\right)_{p}
         & =\left(\left(\iota_H^{\delta+\eta, \delta^{\prime}+\eta} \circ \varphi^{\delta}\right)_{ * }\right)_{p}
    \end{align}

    Furthermore, by the functoriality of $\left[\mathbf{R}, H_{k}\right]\circ\left[\mathbf{R}, \Omega_*\right]\circ \left[\mathbf{R}, P\right]$, we have:
    \begin{align}
        \left(\left(\iota_G^{\delta, \delta+2 \eta}\right)_{ * }\right)_{p}
         & =\left(\left(\psi^{\delta+\eta}\right)_{ * }\right)_{p} \circ\left(\left(\varphi^{\delta}\right)_{ * }\right)_{p}                          \\
        \left(\left(\varphi^{\delta^{\prime}}\right)_{ * }\right)_{p} \circ\left(\left(\iota_G^{\delta, \delta^{\prime}}\right)_{ * }\right)_{p}
         & =\left(\left(\iota_H^{\delta+\eta, \delta^{\prime}+\eta}\right)_{ * }\right)_{p} \circ\left(\left(\varphi^{\delta}\right)_{ * }\right)_{p}
    \end{align}

    Similarly, for any $p \in \mathbb{N}$,
    \begin{align}
        \left(\left(\iota_H^{\delta, \delta+2 \eta}\right)_{ * }\right)_{p}
         & =\left(\left(\varphi^{\delta+\eta}\right)_{ * }\right)_{p} \circ\left(\left(\psi^{\delta}\right)_{ * }\right)_{p}                       \\
        \left(\left(\psi^{\delta^{\prime}}\right)_{ * }\right)_{p} \circ\left(\left(\iota_H^{\delta, \delta^{\prime}}\right)_{ * }\right)_{p}
         & =\left(\left(\iota_G^{\delta+\eta, \delta^{\prime}+\eta}\right)_{ * }\right)_{p} \circ\left(\left(\psi^{\delta}\right)_{ * }\right)_{p}
    \end{align}

    For $\delta\le0$, we have $E^\delta=\emptyset$, hence $H_p\left(G^\delta\right)=H_p\left(H^\delta\right)=0,\forall p\ge 0$.

    We define
    \begin{align}
        u_\delta=\begin{cases}
                     \left(\left(\varphi^{\delta}\right)_{ * }\right)_{p} & \delta> 0   \\
                     0                                                    & \delta\le 0
                 \end{cases}
    \end{align}
    \begin{align}
        v_\delta=\begin{cases}
                     \left(\left(\psi^{\delta}\right)_{ * }\right)_{p} & \delta> 0   \\
                     0                                                 & \delta\le 0
                 \end{cases}
    \end{align}

    Then
    \begin{align}
        u_{\delta}: & H_{p}\left(G^{\delta}\right) \rightarrow H_{p}\left(H^{\delta+\eta}\right) ,\delta \in \mathbb{R} \\
        v_{\delta}: & H_{p}\left(H^{\delta}\right) \rightarrow H_{p}\left(G^{\delta+\eta}\right) ,\delta \in \mathbb{R}
    \end{align}
    make $\left\{H_{p}\left(G^{\delta}\right)\right\}_{\delta\in \mathbb{R}}$ and $\left\{H_{p}\left(H^{\delta}\right)\right\}_{\delta\in \mathbb{R}}$ $\eta$-interleaved. By \ref{stab}, the theorem is proved.
\end{proof}

In particular, for the case where the underlying directed graphs are the same but with different weights, considering $\varphi=\psi=\operatorname{id}_V$, we immediately obtain:
\begin{corollary}
    For weighted directed graphs
    \begin{align}
        G=\left(V,E, w_{G}\right),G^\prime=\left(V,E, w_{G^\prime}\right)
    \end{align}
    with the filtration mapping taken as the edge-wise horizontal filtration $F_e$, we have:
    \begin{align}
        d_{\mathrm{B}}\left(D_{p}\left(G\right), D_{p}\left(G^\prime \right)\right)
        \leq \max_{x\rightarrow x^\prime \in {G}}|w_{G}\left(x,x^\prime\right)-w_{G^\prime}\left(x,x^\prime \right)|
    \end{align}
    holds for any $p \in \mathbb{N}$. We define $\max_\emptyset=0$.
\end{corollary}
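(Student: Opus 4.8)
The plan is to obtain this as a direct specialization of the preceding stability theorem, taking $\varphi = \psi = \operatorname{id}_V$, rather than rerunning the interleaving argument. First I would check that the identity on vertices is admissible as a directed graph map in both directions: since $G$ and $G'$ share the same underlying directed graph $(V,E)$, we have $x \to x' \in G \iff x \to x' \in G'$, so the defining implication of a directed graph map holds trivially for $\operatorname{id}_V : G \to G'$ and for $\operatorname{id}_V : G' \to G$. With these choices $\psi \circ \varphi = \operatorname{id}_G$ and $\varphi \circ \psi = \operatorname{id}_{G'}$ hold as honest equalities of directed graph maps, so $G$ and $G'$ are homotopy equivalent and the hypotheses of the theorem are satisfied.

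Next I would select the trivial homotopy chains. Because $\psi \circ \varphi$ is literally $\operatorname{id}_G$, I can take $m = 0$, so the chain degenerates to the single map $f_0 = \operatorname{id}_G$ with no one-step homotopy steps, and likewise $g_0 = \operatorname{id}_{G'}$. With $m = 0$ the index ranges $k = 1,\ldots,m-1$ and $k = 1,\ldots,m$ are both empty, so by the convention $\max_\emptyset = 0$ all four terms built from $\operatorname{dis}(f_k)$, $\operatorname{dis}(g_k)$, $\operatorname{cod}(f_{k-1},f_k)$, and $\operatorname{cod}(g_{k-1},g_k)$ vanish. The six-way maximum in the theorem then collapses to
\begin{align}
    d_{\mathrm{B}}\left(D_p(G), D_p(G')\right) \leq \max\left\{\operatorname{dis}(\varphi), \operatorname{dis}(\psi)\right\}.
\end{align}

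Finally I would evaluate the two surviving terms from the definition of $\operatorname{dis}$. For $\varphi = \operatorname{id}_V : G \to G'$ the condition $\varphi(x) \to \varphi(x') \in G'$ is just $x \to x' \in G'$, which coincides with $x \to x' \in G$ since the edge sets agree; hence
\begin{align}
    \operatorname{dis}(\varphi) = \max_{x \to x' \in G} \left|w_{G'}(x,x') - w_G(x,x')\right|.
\end{align}
The identical computation for $\psi = \operatorname{id}_V : G' \to G$ gives $\operatorname{dis}(\psi) = \max_{x \to x' \in G} |w_G(x,x') - w_{G'}(x,x')|$, which equals $\operatorname{dis}(\varphi)$ by symmetry of the absolute value. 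Substituting into the collapsed bound yields the claimed inequality.

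I expect no genuine obstacle here, as the corollary is a clean specialization. The only points deserving care are purely bookkeeping: confirming that the degenerate chain with $m = 0$ is permitted by the theorem (it is, since an equality of maps is a trivial homotopy), and verifying that the empty-maximum convention correctly annihilates the extra terms so that only the two $\operatorname{dis}$ contributions remain. Once these are settled, the identification of $\operatorname{dis}(\operatorname{id}_V)$ with the stated right-hand side follows immediately from the definitions.
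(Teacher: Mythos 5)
Your proposal is correct and matches the paper's own derivation exactly: the paper obtains this corollary by the same specialization $\varphi=\psi=\operatorname{id}_V$ in the preceding theorem, noting only that the conclusion follows "immediately." Your additional bookkeeping (the degenerate homotopy chain, the empty-maximum convention killing the $\operatorname{dis}(f_k)$ and $\operatorname{cod}$ terms, and the evaluation $\operatorname{dis}(\operatorname{id}_V)=\max_{x\to x'\in G}\lvert w_{G}(x,x')-w_{G'}(x,x')\rvert$) simply makes explicit what the paper leaves implicit.
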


For the case of $G,H$ being complete directed graphs, with any vertex mapping $ \varphi:V_G\to V_H,\psi:V_H\to V_G$ inducing directed graph mappings satisfying $ \psi\circ\varphi\simeq_1\operatorname{id}_G, \varphi\circ\psi\simeq_1\operatorname{id}_H$, we have the following corollary.
\begin{corollary}
    For weighted complete directed graphs $G,H$, with the filtration mapping taken as the edge-wise horizontal filtration $F_e$, we have:
    \begin{align}
        d_{\mathrm{B}}\left(D_{p}\left(G\right), D_{p}\left(H\right)\right) \leq
         & \max_{\varphi:V_G\to V_H \atop \psi:V_H\to V_G } \bigg\{
        \operatorname{dis}\left(\varphi\right),\operatorname{dis}\left(\psi\right),\notag                                                                                        \\
         & \frac{1}{2}\operatorname{cod}\left(\psi\circ\varphi,\operatorname{id}_G\right),\frac{1}{2}\operatorname{cod}\left(\varphi\circ\psi,\operatorname{id}_H\right) \bigg\}
    \end{align}
    holds for any $p \in \mathbb{N}$.
\end{corollary}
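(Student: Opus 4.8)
The plan is to derive this corollary as the degenerate $m = 1$ case of the preceding stability theorem, using two elementary structural facts about complete digraphs. First I would observe that when $H$ is complete, any set map $\varphi : V_G \to V_H$ is automatically a directed graph map: if $i \to j \in G$, then either $\varphi(i) = \varphi(j)$ or $\varphi(i), \varphi(j)$ are distinct, in which case $\varphi(i) \to \varphi(j) \in H$ because every ordered pair of distinct vertices is an edge of a complete digraph; in both cases $\varphi(i) \stackrel{\smash{\scriptscriptstyle\rightarrow}}{=} \varphi(j) \in H$. Hence $\varphi \in \operatorname{Mor}_{\mathbf{Dgr}}(G,H)$ and $\psi \in \operatorname{Mor}_{\mathbf{Dgr}}(H,G)$ for arbitrary choices of the underlying vertex maps, with no extra hypothesis needed.

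Second I would show that any two directed graph maps into a complete digraph are one-step homotopic. Given $f, g : G_X \to G_Y$ with $G_Y$ complete, for every $x$ we have $f(x) = g(x)$ or $f(x) \to g(x) \in G_Y$, i.e. $f(x) \stackrel{\smash{\scriptscriptstyle\rightarrow}}{=} g(x)$; by the one-step homotopy criterion this gives $f \simeq_1 g$. Taking $f = \psi \circ \varphi$, $g = \operatorname{id}_G$ yields $\psi \circ \varphi \simeq_1 \operatorname{id}_G$, and symmetrically $\varphi \circ \psi \simeq_1 \operatorname{id}_H$, so $\varphi$ and $\psi$ realize a homotopy equivalence of $G$ and $H$ whose homotopy chains have length exactly $m = 1$.

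With $m = 1$ the preceding theorem applies verbatim, the chains being $f_0 = \psi \circ \varphi \simeq_1 f_1 = \operatorname{id}_G$ and $g_0 = \varphi \circ \psi \simeq_1 g_1 = \operatorname{id}_H$. In its bound, the terms $\tfrac12 \max_{k=1,\ldots,m-1} \operatorname{dis}(f_k)$ and $\tfrac12 \max_{k=1,\ldots,m-1} \operatorname{dis}(g_k)$ run over the empty index set and hence equal $0$ under the convention $\max_\emptyset = 0$, while the two coboundary terms collapse to $\tfrac12 \operatorname{cod}(f_0, f_1) = \tfrac12 \operatorname{cod}(\psi \circ \varphi, \operatorname{id}_G)$ and $\tfrac12 \operatorname{cod}(g_0, g_1) = \tfrac12 \operatorname{cod}(\varphi \circ \psi, \operatorname{id}_H)$. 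This recovers exactly the bracketed expression, establishing the inequality for each fixed pair $(\varphi, \psi)$.

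Finally, because every pair of vertex maps yields such a homotopy equivalence, the estimate holds simultaneously over all admissible $(\varphi, \psi)$, so one may pass to the extremum over the choice of maps displayed in the statement (indeed to the infimum, which gives the sharpest bound). I expect the only delicate point to be the bookkeeping of the degenerate $m = 1$ case --- confirming that the $\operatorname{dis}(f_k)$ and $\operatorname{dis}(g_k)$ contributions genuinely drop out and that a single one-step homotopy is legitimate --- since all the analytic content, namely the construction of the $\eta$-interleaving and its conversion into a bottleneck bound via the algebraic stability theorem (Theorem \ref{stab}) and the homotopy invariance of path homology (Theorem \ref{homotopy1}), has already been discharged in the proof of the main theorem.
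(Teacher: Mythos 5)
Your proposal is correct and follows essentially the same route as the paper, which justifies this corollary in one (largely implicit) sentence: for complete digraphs any vertex maps $\varphi,\psi$ automatically induce directed graph maps satisfying $\psi\circ\varphi\simeq_1\operatorname{id}_G$ and $\varphi\circ\psi\simeq_1\operatorname{id}_H$, and the preceding theorem is then applied with $m=1$. Your explicit bookkeeping of the degenerate case --- the empty maxima $\max_{k=1,\ldots,m-1}\operatorname{dis}(f_k)=\max_{k=1,\ldots,m-1}\operatorname{dis}(g_k)=0$ and the collapse of the coboundary terms to $\tfrac{1}{2}\operatorname{cod}(\psi\circ\varphi,\operatorname{id}_G)$ and $\tfrac{1}{2}\operatorname{cod}(\varphi\circ\psi,\operatorname{id}_H)$ --- simply makes precise what the paper leaves to the reader.
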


\subsection{Stability Results Based on Edge-Weighted Path Complexes}

\begin{definition}
    For edge-weighted path complexes $P,S$ and a weak homomorphism $\varphi: P\rightarrow S $, for $n\in \mathbb{Z}_{>0} $, define
    \begin{align}
        \operatorname{dis}_n\left(\varphi\right)=\max_{e\in P_{n} \atop \varphi\left(e\right)\text{ regular}}|\operatorname{len}\left(\varphi\left(e\right)\right)-\operatorname{len}\left(e\right)|
    \end{align}
    Define $\operatorname{dis}_0\left(\varphi\right)=0$. Define $\max_\emptyset=0$.
\end{definition}
\begin{definition}
    For edge-weighted path complexes $P,S$ and weak homomorphisms $\varphi,\psi: P\rightarrow S $, define
    \begin{align}
        \operatorname{cod}\left(\varphi,\psi\right)=\max\left\{\max_{\varphi\left(x\right)\rightarrow\psi\left(x\right) \in S}|{w}_S\left(\varphi\left(x\right),\psi\left(x\right)\right)|,\max_{\psi\left(x\right)\rightarrow\varphi\left(x\right) \in S}|{w}_S\left(\psi\left(x\right),\varphi\left(x\right)\right)|\right\}
    \end{align}
    Define $\max_\emptyset=0$.
\end{definition}
\begin{theorem}
    For edge-weighted path complexes
    \begin{align}
        P=\left(V_P,\left\{P_{n}\right\}_{n=0}^\infty, w_{P}:P_1\rightarrow \mathbb{R}_{>0}\right),S=\left(V_S,\left\{S_{n}\right\}_{n=0}^\infty, w_{S}:S_1\rightarrow \mathbb{R}_{>0}\right)
    \end{align}
    with filtration map given by pathwise sublevel filtrations $F_p$. For any $p \in \mathbb{N} $, if the subpath complexes of $P,S$
    \begin{align}
        \overline{P}_p =\Delta \left( P_{p}\cup P_{p+1}\right),\overline{S}_p =\Delta \left( S_{p}\cup S_{p+1}\right)
    \end{align}
    are weakly homotopy equivalent, i.e., there exist weak homomorphisms
    \begin{align}
        \varphi\in \operatorname{Mor}_{ \operatorname{\mathcal{WP}}}\left(\overline{P}_p,\overline{S}_p\right),\psi\in \operatorname{Mor}_{ \operatorname{\mathcal{WP}}}\left(\overline{S}_p,\overline{P}_p\right)
    \end{align}
    such that
    \begin{align}
        \psi\circ\varphi=f_0\sim_1\ldots\sim_1f_m=\operatorname{id}_{\overline{P}_p}, \varphi\circ\psi=g_0\sim_1\ldots\sim_1g_m=\operatorname{id}_{\overline{S}_p}
    \end{align}
    then we have:
    \begin{align}
        d_{\mathrm{B}}\left(D_{p}\left(P\right), D_{p}\left(S\right)\right)\leq \max\bigg\{ \max_{i=1,\ldots,p+1}\operatorname{dis}_{i}\left(\varphi\right),\max_{i=1,\ldots,p+1}\operatorname{dis}_{i}\left(\psi\right),\notag \\
        \frac{1}{2}\max_{k=1,\ldots, m}{\max_{l=0,\ldots p}[\operatorname{dis}_{l}\left(f_{k-1}\right)+\operatorname{dis}_{p-l}\left(f_{k}\right)]+\operatorname{cod}\left(f_{k-1},f_k\right)},\notag                           \\
        \frac{1}{2}\max_{k=1,\ldots, m}{\max_{l=0,\ldots p}[\operatorname{dis}_{l}\left(g_{k-1}\right)+\operatorname{dis}_{p-l}\left(g_{k}\right)]+\operatorname{cod}\left(g_{k-1},g_k\right)}\bigg\}
    \end{align}
\end{theorem}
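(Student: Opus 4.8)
The plan is to follow the architecture of the weighted-digraph proof, replacing directed-graph homotopy by weak homotopy of path complexes and the edge-level estimate by length estimates, and to reduce everything to the truncated complexes $\overline{P}_p,\overline{S}_p$ on which the weak homotopy equivalence is assumed. Write $\eta$ for the right-hand side. The first step is a reduction: by Proposition \ref{determine}, $H_p$ depends only on $\mathcal{A}_p$ and $\mathcal{A}_{p+1}$, and since $\overline{P}_p=\Delta(P_p\cup P_{p+1})$ has exactly the same $p$- and $(p+1)$-paths as $P$ (the truncations of $(p{+}1)$-paths already lie in $P_p$), filtering by path length gives $\mathcal{A}_p(\overline{P}_p^\delta)=\mathcal{A}_p(P^\delta)$ and $\mathcal{A}_{p+1}(\overline{P}_p^\delta)=\mathcal{A}_{p+1}(P^\delta)$ for every $\delta$, with the same restricted weights. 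Hence $H_p(P^\delta)\cong H_p(\overline{P}_p^\delta)$ naturally in $\delta$, so $D_p(P)=D_p(\overline{P}_p)$ and likewise for $S$; from now on I work entirely with $\overline{P}_p,\overline{S}_p$.

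Second, I promote the weak morphisms to the filtered setting. For $\delta>0$ and a path $e\in\overline{P}_p^\delta$ of dimension $i\le p+1$ one has $\operatorname{len}(e)\le\delta$, so if $\varphi(e)$ is regular then $\operatorname{len}(\varphi(e))\le\operatorname{len}(e)+\operatorname{dis}_i(\varphi)\le\delta+\eta$, and otherwise $\varphi(e)$ is non-regular; in both cases $\varphi$ restricts to a weak morphism $\varphi^\delta:\overline{P}_p^\delta\to\overline{S}_p^{\delta+\eta}$, and symmetrically $\psi^\delta:\overline{S}_p^\delta\to\overline{P}_p^{\delta+\eta}$. Because $\varphi^\delta$ and $\varphi^{\delta'}$ are induced by the same vertex map, the squares with the inclusions $\iota_P^{\delta,\delta'},\iota_S^{\delta,\delta'}$ commute on the nose, giving $\varphi^{\delta'}\circ\iota_P^{\delta,\delta'}=\iota_S^{\delta+\eta,\delta'+\eta}\circ\varphi^\delta$ and its analogue for $\psi$. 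For $\delta\le 0$ no edge is present and the construction degenerates; I handle this boundary exactly as in the digraph proof, by setting the interleaving maps to $0$ there.

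The crux is the filtered one-step homotopy. Each $f_k$ restricts to a weak morphism $f_k^\delta:\overline{P}_p^\delta\to\overline{P}_p^{\delta+2\eta}$ by the same length estimate, and I must show $f_{k-1}^\delta$ and $f_k^\delta$ induce the same map on $H_p$. By Proposition \ref{determine} it suffices to control the prism (chain-homotopy) operator $\mathcal{P}$ of the homotopy $f_{k-1}\sim_1 f_k$ on $p$-chains: it sends an allowed $p$-path $i_0\ldots i_p$ to a combination of images of cylinder paths, each of the form $f_{k-1}(i_0)\ldots f_{k-1}(i_l)f_k(i_l)\ldots f_k(i_p)$ indexed by the split point $l=0,\ldots,p$. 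Its length splits into a prefix of dimension $l$ distorted by $f_{k-1}$, the single vertical edge $(f_{k-1}(i_l),f_k(i_l))$, and a suffix of dimension $p-l$ distorted by $f_k$, whence
\begin{align}
    \operatorname{len}(f_{k-1}(i_0)\ldots f_k(i_p))\le\operatorname{len}(i_0\ldots i_p)+\operatorname{dis}_l(f_{k-1})+\operatorname{cod}(f_{k-1},f_k)+\operatorname{dis}_{p-l}(f_k)\le\delta+2\eta,
\end{align}
the middle term being bounded by $\operatorname{cod}$ precisely because $f_{k-1}(i_l)\stackrel{\smash{\scriptscriptstyle\rightarrow}}{=}f_k(i_l)$. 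Thus $\mathcal{P}$ lands in $\mathcal{A}_{p+1}(\overline{P}_p^{\delta+2\eta})$ (and is $\partial$-invariant by the GLMY construction), and the chain-homotopy identity $\partial\mathcal{P}+\mathcal{P}\partial=(f_k^\delta)_*-(f_{k-1}^\delta)_*$ shows, via Proposition \ref{determine}, that $H_p(f_{k-1}^\delta)=H_p(f_k^\delta)$. This is the step I expect to be hardest: one must verify that only $p$-dimensional cylinder paths enter the $H_p$ computation (so that the two subscripts sum to $p$, matching the stated bound rather than $p+1$), and that the vertical edges genuinely contribute only a $\operatorname{cod}$ term.

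Finally I assemble the interleaving. Chaining the one-step homotopies gives $\psi^{\delta+\eta}\circ\varphi^\delta=f_0^\delta$ weakly homotopic to $f_m^\delta=\iota_P^{\delta,\delta+2\eta}$, and the symmetric statement on $S$. Applying the functors $[\mathbf{R},\Omega_*]$ and $[\mathbf{R},H_k]$ (Propositions \ref{fun_pc_ch} and \ref{fun_ch_vec}) together with Theorem \ref{homotopy2}, the maps $u_\delta=H_p(\varphi^\delta)$ and $v_\delta=H_p(\psi^\delta)$ satisfy $v_{\delta+\eta}\circ u_\delta=H_p(\iota_P^{\delta,\delta+2\eta})$ and $u_{\delta+\eta}\circ v_\delta=H_p(\iota_S^{\delta,\delta+2\eta})$, together with the naturality squares from the second step; these are exactly the conditions of Figure \ref{fig:sil}, so $\{H_p(\overline{P}_p^\delta)\}$ and $\{H_p(\overline{S}_p^\delta)\}$ are $\eta$-interleaved. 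Theorem \ref{stab} then yields $d_B(D_p(\overline{P}_p),D_p(\overline{S}_p))=d_I\le\eta$, and the reduction of the first step transfers this to the claimed bound for $D_p(P),D_p(S)$.
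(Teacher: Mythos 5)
Your proposal is correct and follows the paper's own proof step for step: the reduction $D_p(P)=D_p(\overline{P}_p)$ via Proposition \ref{determine}, the filtered weak morphisms $\varphi^\delta,\psi^\delta$ obtained from the $\operatorname{dis}_i$ length estimates, the bound $\operatorname{len}\le\delta+\operatorname{dis}_l(f_{k-1})+\operatorname{dis}_{p-l}(f_k)+\operatorname{cod}(f_{k-1},f_k)$ on images of cylinder paths, the $\eta$-interleaving, and Theorem \ref{stab}. The one genuine divergence is the homotopy step. The paper establishes filtered weak one-step homotopies $f_{k-1}^\delta\sim_1 f_k^\delta$ by extending $F_k$ over the cylinder and then cites Theorem \ref{homotopy2}, whereas you unpack the prism operator on $p$-chains and conclude via Proposition \ref{determine}. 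Your flagged worry about why the subscripts sum to $p$ rather than $p+1$ is legitimate; in the paper's route it is resolved because a cylinder path over a $(p+1)$-path of $\overline{P}_p$ has dimension $p+2$, hence cannot be allowed in $\overline{P}_p$ and must be non-regular under the weak morphism $F_k$, so only $n\le p$ requires the length estimate. Your prism route makes this automatic and moreover avoids the paper's unsupported chain $\operatorname{dis}_n(f_k)\le\operatorname{dis}_{p+1}(f_k)\le 2\eta$ (monotonicity of $\operatorname{dis}_n$ in $n$ is unproved, and $\eta$ does not visibly bound $\operatorname{dis}_{p+1}(f_k)$). To make your version fully watertight, telescope the identities $f_k(z)-f_{k-1}(z)=\partial\mathcal{P}_k(z)$ over $k$ for a cycle $z$: then only $\sum_k\mathcal{P}_k(z)\in\mathcal{A}_{p+1}\bigl(\overline{P}_p^{\delta+2\eta}\bigr)$ and the two end maps $\psi^{\delta+\eta}\circ\varphi^\delta$ and $\iota_{\overline{P}_p}^{\delta,\delta+2\eta}$ need to exist at the filtered level, so the intermediate $f_k^\delta$ need not individually carry $\mathcal{A}_{p+1}^{\delta}$ into $\mathcal{A}_{p+1}^{\delta+2\eta}$ — exactly the point where a bound on $\operatorname{dis}_{p+1}(f_k)$ would otherwise be needed by both your sentence ``$H_p(f_{k-1}^\delta)=H_p(f_k^\delta)$'' and the paper's argument.
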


\begin{proof}
    By \ref{determine}, we know that $H_p$ is completely determined by $\mathcal{A}_p, \mathcal{A}_{p+1}$, so $D_p(P)=D_p(\overline{P}_p), D_p(S)=D_p(\overline{S}_p)$, and we only need to consider $\overline{P}_p, \overline{S}_p$.

    Let $\eta$ denote the right-hand side term. We first consider the case $\delta > 0$.

    For $e \in \left(\overline{P}_p^\delta\right)_n$, $n \le p+1$, we have $\operatorname{len}\left(e\right) \leq \delta$.

    If $\varphi\left(e\right)$ is regular, by the definition of $\eta$, we have
    \begin{align}
        \operatorname{len}\left(\varphi\left(e\right)\right)
        \leq & \operatorname{len}\left(e\right)+|\operatorname{len}\left(\varphi\left(e\right)\right)-\operatorname{len}\left(e\right)|  \notag \\
        \leq & \delta+\operatorname{dis}_n\left(\varphi\right) \notag                                                                           \\
        \leq & \delta+\max_{i=1,\ldots,p+1}\operatorname{dis}_{i}\left(\varphi\right) \notag                                                    \\
        \leq & \delta+\eta
    \end{align}

    Therefore, $\varphi\left(e\right)\in \overline{P}_p^{\delta+\eta}$, thus $\varphi^{\delta}:x \mapsto \varphi\left(x\right)\in \operatorname{Mor}_{ \mathcal{WP}} \left(\overline{P}_p^{\delta}, \overline{S}_p^{\delta+\eta}\right)$.

    Similarly, $\psi^{\delta}:y \mapsto \psi\left(y\right) \in \operatorname{Mor}_{ \mathcal{WP}} \left(\overline{S}_p^{\delta}, \overline{P}_p^{\delta+\eta}\right)$.

    For $\delta \leq \delta^{\prime} \in \mathbb{R}$, denote weak morphisms in simplicial complexes as follows:
    \begin{align}
        \iota_{\overline{P}_p}^{\delta, \delta^{\prime}}:\overline{P}_p^{\delta} \hookrightarrow \overline{P}_p^{\delta^{\prime}}, \iota_{\overline{S}_p}^{\delta, \delta^{\prime}}:\overline{S}_p^{\delta} \hookrightarrow \overline{S}_p^{\delta^{\prime}}
    \end{align}

    For all $x \in V_{\overline{P}_p^\delta}$, $\varphi^{\delta^{\prime}}\left(\iota_{\overline{P}_p}^{\delta, \delta^{\prime}}\left(x\right)\right)=\varphi^{\delta^{\prime}}\left(x\right)=\varphi\left(x\right)$.

    The second equality is from the definition of $\varphi^{\delta^{\prime}}$, and the first one is because $\iota_{\overline{P}_p}^{\delta, \delta^{\prime}}$ is an inclusion weak morphism.

    Similarly, $\iota_{\overline{S}_p}^{\delta+\eta, \delta^{\prime}+\eta}\left(\psi^{\delta}\left(x\right)\right)=\iota_{\overline{S}_p}^{\delta+\eta, \delta^{\prime}+\eta}\left(\psi\left(x\right)\right)=\varphi\left(x\right)$.

    Thus, we have $\varphi^{\delta^{\prime}}\left(\iota_{\overline{P}_p}^{\delta, \delta^{\prime}}\left(x\right)\right)=\iota_{\overline{S}_p}^{\delta+\eta, \delta^{\prime}+\eta}\left(\varphi^{\delta}\left(x\right)\right)$.

    Therefore, $\varphi^{\delta^{\prime}} \circ \iota_{\overline{P}_p}^{\delta, \delta^{\prime}}\sim \iota_{\overline{S}_p}^{\delta+\eta, \delta^{\prime}+\eta} \circ \varphi^{\delta}$.

    Similarly, $\psi^{\delta^{\prime}} \circ \iota_{\overline{S}_p}^{\delta, \delta^{\prime}}\sim \iota_{\overline{P}_p}^{\delta+\eta, \delta^{\prime}+\eta} \circ \psi^{\delta}$.

    Considering the weak morphisms induced by $f_k$, $f_k^\delta:{\overline{P}_p}^\delta\rightarrow \overline{P}_p^{\delta+2\eta}$.

    For $e \in \left(\overline{P}_p^{\delta}\right)_n$, $n\le p+1$, we have $\operatorname{len}\left(e\right) \leq \delta$.

    If $f_k\left(e\right)$ is regular, by the definition of $\eta$, we have $\operatorname{len}\left(f_k\left(e\right)\right) \leq \operatorname{len}\left(e\right) +|\operatorname{len}\left(f_k\left(e\right)\right) -\operatorname{len}\left(e\right) |\leq \delta+\operatorname{dis}_n\left(f_k\right)\leq \delta+\operatorname{dis}_{p+1}\left(f_k\right) \leq \delta+2\eta$.

    Therefore, $f_k\left(e\right)\in \overline{P}_p^{\delta+2\eta}$, thus $f_k^{\delta}:x \mapsto f_k\left(x\right)\in \operatorname{Mor}_{ \mathcal{WP}} \left(\overline{P}_p^{\delta}, \overline{P}_p^{\delta+2\eta}\right)$.

    Similarly, $g_k^{\delta}:y \mapsto g_k\left(y\right) \in \operatorname{Mor}_{ \mathcal{WP}} \left(\overline{S}_p^{\delta}, \overline{S}_p^{\delta+2\eta}\right)$.

    By $f_{k-1}\sim_1 f_{k},k=1,\ldots, m$, suppose there exists a weak morphism of simplicial complexes $F_k:\overline{P}_p\times I \rightarrow \overline{S}_p$ satisfying $\left.F_{k}\right|_{\overline{P}_p}=f_{k-1},\left.F_{k}\right|_{\overline{P}_p^{\prime}}=f_{k}^{\prime}$.

    By definition,
    \begin{align}
        \overline{P}_p \times I
        = & \left\{ w \mid w \in \overline{P}_p \right\}
        \cup\left\{ w^{\prime} \mid w^{\prime} \in \overline{P}_p^{\prime}\right\} \notag                                                              \\
          & \cup\left\{i_{0} \ldots i_{l} i_{l}^{\prime} \ldots i_{n}^{\prime} \mid i_{0} \ldots i_{l} i_{l+1} \ldots i_{n} \in \overline{P}_p\right\}
    \end{align}

    We only need to consider the definition of $F_k$ on $\left\{i_{0} \ldots i_{l} i_{l}^{\prime} \ldots i_{n}^{\prime} \mid i_{0} \ldots i_{l} i_{l+1} \ldots i_{n} \in \overline{P}_p\right\}$.

    Let $F_k: i_{0} \ldots i_{l} i_{l}^{\prime} \ldots i_{n}^{\prime} \mapsto f_{k-1}\left(i_{0}\right) \ldots f_{k-1}\left(i_{l}\right) f_{k}\left(i_{l}\right) \ldots f_{k}\left(i_{n}\right)$.

    For $i_{0} \ldots i_{l} i_{l+1} \ldots i_{n} \in \left(\overline{P}_p^{\delta}\right)_n$, $n\le p+1$, we have $\operatorname{len}\left(i_{0} \ldots i_{l} i_{l+1} \ldots i_{n}\right) \leq \delta$.

    If $F_k\left(i_{0} \ldots i_{l} i_{l}^{\prime} \ldots i_{n}^{\prime}\right)=f_{k-1}\left(i_{0}\right) \ldots f_{k-1}\left(i_{l}\right) f_{k}\left(i_{l}\right) \ldots f_{k}\left(i_{n}\right)$ is regular, we have
    \begin{align}
            & \operatorname{len}\left(f_{k-1}\left(i_{0}\right) \ldots f_{k-1}\left(i_{l}\right) f_{k}\left(i_{l}\right) \ldots f_{k}\left(i_{n}\right)\right)\notag                                                                             \\
        \le & \operatorname{len}\left(i_{0} \ldots i_{l} i_{l+1} \ldots i_{n}\right)\notag                                                                                                                                                       \\
            & +|\operatorname{len}\left(f_{k-1}\left(i_{0}\right) \ldots f_{k-1}\left(i_{l}\right) f_{k}\left(i_{l}\right) \ldots f_{k}\left(i_{n}\right)\right) -\operatorname{len}\left(i_{0} \ldots i_{l} i_{l+1} \ldots i_{n}\right)| \notag \\
        \le & \delta+ \operatorname{dis}_l\left(f_{k-1}\right)+\operatorname{dis}_{n-l}\left(f_{k}\right)+|w_P\left(f_{k-1}\left(i_l\right),f_{k}\left(i_l\right)\right)|  \notag                                                                \\
        \le & \delta+ \operatorname{dis}_l\left(f_{k-1}\right)+\operatorname{dis}_{n-l}\left(f_{k}\right)+\operatorname{cod}\left(f_{k-1},f{k}\right) \notag                                                                                     \\
        \le & \delta+2\eta
    \end{align}

    Therefore, $f_{k-1}^{\delta}\sim_1 f_{k}^{\delta}$.

    For $F_k:\overline{P}_p\times I \rightarrow \overline{S}_p$ satisfying $\left.F_{k}\right|_{\overline{P}_p}=f_{k},\left.F_{k}\right|_{\overline{P}_p^{\prime}}=f_{k-1}^{\prime}$, the discussion is similar, and we obtain the same conclusion.

    Similarly, we have $g_{k-1}^{\delta}\sim_1 g_{k}^{\delta}$.

    Thus, $\iota_{\overline{P}_p}^{\delta, \delta+2 \eta}\sim \psi^{\delta+\eta} \circ \varphi^{\delta}$,
    $\iota_{\overline{S}_p}^{\delta, \delta+2 \eta}\sim \varphi^{\delta+\eta} \circ \psi^{\delta}$.

    Applying \ref{homotopy2}, we have the following results in the homotopy groups:
    \begin{align}
        \left(\left(\iota_{\overline{P}_p}^{\delta, \delta+2 \eta}\right)_{ * }\right)_{p}
         & =\left(\left(\psi^{\delta+\eta} \circ \varphi^{\delta}\right)_{ * }\right)_{p}                                         \\
        \left(\left(\varphi^{\delta^{\prime}} \circ \iota_{\overline{P}_p}^{\delta, \delta^{\prime}}\right)_{ * }\right)_{p}
         & =\left(\left(\iota_{\overline{S}_p}^{\delta+\eta, \delta^{\prime}+\eta} \circ \varphi^{\delta}\right)_{ * }\right)_{p}
    \end{align}

    Also, by functoriality of $\left[\mathbf{R}, H_{k}\right]\circ\left[\mathbf{R}, \Omega_*\right]$, we have:
    \begin{align}
        \left(\left(\iota_{\overline{P}_p}^{\delta, \delta+2 \eta}\right)_{ * }\right)_{p}
         & =\left(\left(\psi^{\delta+\eta}\right)_{ * }\right)_{p} \circ\left(\left(\varphi^{\delta}\right)_{ * }\right)_{p}                                         \\
        \left(\left(\varphi^{\delta^{\prime}}\right)_{ * }\right)_{p} \circ\left(\left(\iota_{\overline{P}_p}^{\delta, \delta^{\prime}}\right)_{ * }\right)_{p}
         & =\left(\left(\iota_{\overline{S}_p}^{\delta+\eta, \delta^{\prime}+\eta}\right)_{ * }\right)_{p} \circ\left(\left(\varphi^{\delta}\right)_{ * }\right)_{p}
    \end{align}

    Similarly, we have,
    \begin{align}
        \left(\left(\iota_{\overline{S}_p}^{\delta, \delta+2 \eta}\right)_{ * }\right)_{p}
         & =\left(\left(\varphi^{\delta+\eta}\right)_{ * }\right)_{p} \circ\left(\left(\psi^{\delta}\right)_{ * }\right)_{p}                                      \\
        \left(\left(\psi^{\delta^{\prime}}\right)_{ * }\right)_{p} \circ\left(\left(\iota_{\overline{S}_p}^{\delta, \delta^{\prime}}\right)_{ * }\right)_{p}
         & =\left(\left(\iota_{\overline{P}_p}^{\delta+\eta, \delta^{\prime}+\eta}\right)_{ * }\right)_{p} \circ\left(\left(\psi^{\delta}\right)_{ * }\right)_{p}
    \end{align}

    For the case $\delta\le0$, we have $\overline{P}_p^\delta=V_{\overline{P}_p},\overline{S}_p^\delta=V_{\overline{S}_p}$, and thus $H_p\left(\overline{P}_p^\delta\right)=H_p\left(\overline{S}_p^\delta\right)=0,\forall p\ge 0$.

    We define
    \begin{align}
        u_\delta=\begin{cases}
                     \left(\left(\varphi^{\delta}\right)_{ * }\right)_{p} & \delta> 0   \\
                     0                                                    & \delta\le 0
                 \end{cases}
    \end{align}
    \begin{align}
        v_\delta=\begin{cases}
                     \left(\left(\psi^{\delta}\right)_{ * }\right)_{p} & \delta> 0   \\
                     0                                                 & \delta\le 0
                 \end{cases}
    \end{align}

    Then
    \begin{align}
        u_{\delta}: & H_{p}\left(\overline{P}_p^\delta\right) \rightarrow H_{p}\left(\overline{S}_p^{\delta+\eta}\right) ,\delta \in \mathbb{R} \\
        v_{\delta}: & H_{p}\left(\overline{S}_p^\delta\right) \rightarrow H_{p}\left(\overline{P}_p^{\delta+\eta}\right) ,\delta \in \mathbb{R}
    \end{align}
    make $\left\{H_{p}\left(\overline{P}_p^\delta\right)\right\}_{\delta\in \mathbb{R}}$ and $\left\{H_{p}\left(\overline{S}_p^\delta\right)\right\}_{\delta\in \mathbb{R}}$ $\eta$-interleaved.

    Thus, by \ref{stab}, the theorem is proved.
\end{proof}

In particular, for the case where the underlying road complex is the same but has different weights, considering $\varphi=\psi=\operatorname{id}_V$, we immediately get:
\begin{corollary}
    For chain complexes
    \begin{align}
        P=\left(V,\left\{P_{n}\right\}_{n=0}^\infty, w_{P}:P_1\rightarrow \mathbb{R}_{>0}\right),P^\prime=\left(V,\left\{P_{n}\right\}_{n=0}^\infty, w_{P^\prime}:P_1\rightarrow \mathbb{R}_{>0}\right)
    \end{align}
    with the same underlying road complex but different weights, considering $\varphi=\psi=\operatorname{id}_V$, we immediately obtain:
    \begin{align}
        d_{\mathrm{B}}\left(D_{p}\left(P\right), D_{p}\left(P^\prime\right)\right)
        \leq \max_{e\in P }|\operatorname{len}_{P}\left(e\right)-\operatorname{len}_{P^\prime}\left(e\right)|
    \end{align}
    holds for any $p \in \mathbb{N}$.
\end{corollary}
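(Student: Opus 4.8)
The plan is to invoke the preceding theorem on edge-weighted path complexes with the trivializing choice $\varphi = \psi = \operatorname{id}_V$, and then to verify that under this choice every term on the right-hand side collapses except the two distortion terms. First I would check the hypotheses: since $P$ and $P'$ have the \emph{same} underlying path complex, the identity vertex map sends each allowed path of $\overline{P}_p$ to itself, which is still allowed, so $\operatorname{id}_V$ is a weak morphism in both directions $\overline{P}_p \to \overline{S}_p$ and $\overline{S}_p \to \overline{P}_p$. Moreover $\psi \circ \varphi = \operatorname{id}_{\overline{P}_p}$ and $\varphi \circ \psi = \operatorname{id}_{\overline{S}_p}$ hold exactly, so I may take the trivial homotopy sequences with $m = 0$ (equivalently $f_k \equiv \operatorname{id}_{\overline{P}_p}$, $g_k \equiv \operatorname{id}_{\overline{S}_p}$); hence $\overline{P}_p$ and $\overline{S}_p$ are weakly homotopy equivalent and the theorem applies.

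Next I would evaluate the bound term by term. Because the homotopy maps are identities of a path complex with \emph{unchanged} weights, one has $\operatorname{len}(f_k(e)) = \operatorname{len}(e)$, so every $\operatorname{dis}_l(f_k)$ and $\operatorname{dis}_l(g_k)$ equals $0$; and since the complexes are regular there is no edge $x \to x$, so the index sets defining $\operatorname{cod}(f_{k-1}, f_k)$ and $\operatorname{cod}(g_{k-1}, g_k)$ are empty and these costs vanish by the convention $\max_\emptyset = 0$. (Taking $m = 0$, the maxima over $k = 1, \ldots, m$ are already empty, so this holds regardless of regularity.) Thus the third and fourth lines of the theorem's bound are $0$, and only $\max_i \operatorname{dis}_i(\varphi)$ and $\max_i \operatorname{dis}_i(\psi)$ remain.

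Finally I would identify these surviving terms. Viewing $\varphi = \operatorname{id}_V$ as a map $\overline{P}_p \to \overline{S}_p$, its source lengths are computed with $w_P$ and its target lengths with $w_{P'}$, and every regular path is fixed, so
\[
\operatorname{dis}_i(\varphi) = \max_{e \in P_i} |\operatorname{len}_{P'}(e) - \operatorname{len}_P(e)|,
\]
and identically for $\psi$. Taking the maximum over $i = 1, \ldots, p+1$ produces the largest value of $|\operatorname{len}_P(e) - \operatorname{len}_{P'}(e)|$ over allowed paths of dimension at most $p+1$, which is in turn dominated by the maximum over \emph{all} allowed paths $e \in P$; combining with the theorem yields the claimed inequality. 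I do not expect a genuine obstacle, as this is a direct specialization; the only point demanding care is the asymmetry in the distortion terms — source weighted by $w_P$ and target by $w_{P'}$ — together with the observation that the homotopy maps operate within a single weighting and therefore contribute nothing.
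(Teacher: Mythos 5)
Your proposal is correct and matches the paper's intended argument: the paper offers no separate proof, presenting the corollary as an immediate specialization of the preceding theorem with $\varphi = \psi = \operatorname{id}_V$, which is exactly what you carry out. Your explicit verifications — that the trivial homotopy sequence (with $m=0$, making the $k$-maxima empty) kills the $\operatorname{dis}_l(f_k)$ and $\operatorname{cod}$ terms, and that $\operatorname{dis}_i(\operatorname{id}_V)$ computes $|\operatorname{len}_{P}(e)-\operatorname{len}_{P^\prime}(e)|$ on paths of dimension at most $p+1$, dominated by the maximum over all of $P$ — are precisely the details the paper leaves implicit.
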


For the case of $P,S$ being complete chain complexes, any vertex maps $ \varphi:V_P\to V_S,\psi:V_S\to V_P$ induce weak morphisms of chain complexes and satisfy $ \psi\circ\varphi\sim_1\operatorname{id}_P, \varphi\circ\psi\sim_1\operatorname{id}_S$. Hence, we have the following corollary.
\begin{corollary}
    For edge-weighted complete chain complexes $P,S$, with the filtration map taken as the down-horizon filtration $F_p$, we have:
    \begin{align}
         & d_{\mathrm{B}}\left(D_{p}\left(P\right), D_{p}\left(S\right)\right)\leq \max_{\varphi:V_P\to V_S\atop \psi:V_S\to V_P}\bigg\{ \max_{i=1,\ldots,p+1}\operatorname{dis}_{i}\left(\varphi\right),\max_{i=1,\ldots,p+1}\operatorname{dis}_{i}\left(\psi\right),\notag \\
         & \frac{1}{2}{\max_{l=0,\ldots p}[\operatorname{dis}_{l}\left(\psi\circ\varphi\right)+\operatorname{dis}_{p-l}\left(\operatorname{id}_{P}\right)]+\operatorname{cod}\left(f_{k-1},f_k\right)},\notag                                                                \\
         & \frac{1}{2}{\max_{l=0,\ldots p}[\operatorname{dis}_{l}\left(\varphi\circ\psi\right)+\operatorname{dis}_{p-l}\left(\operatorname{id}_{S}\right)]+\operatorname{cod}\left(g_{k-1},g_k\right)}\bigg\}
    \end{align}
    holds for any $p \in \mathbb{N}$.
\end{corollary}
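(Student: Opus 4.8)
The plan is to read this off from the preceding stability theorem for edge-weighted path complexes by exploiting the two structural features of completeness, both asserted in the remark just above the statement: that every vertex map is admissible, and that the homotopy sequence can be taken to have a single step. First I would verify admissibility. Fix a vertex map $\varphi\colon V_P\to V_S$ and an allowed path $e=i_0\ldots i_n$ of $\overline{P}_p$; then $\varphi_*(e)=e_{\varphi(i_0)\ldots\varphi(i_n)}$ is either regular, in which case it is automatically allowed because a complete complex contains every regular elementary path, or it is non-regular, which is exactly the alternative permitted in the definition of a weak morphism. Hence $\varphi\in\operatorname{Mor}_{\mathbf{PC}}(\overline{P}_p,\overline{S}_p)$, and likewise $\psi$ and the composites $\psi\circ\varphi$, $\varphi\circ\psi$ are weak morphisms.

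Next I would exhibit the one-step homotopies $\psi\circ\varphi\sim_1\operatorname{id}_P$ and $\varphi\circ\psi\sim_1\operatorname{id}_S$, so that the theorem may be applied with $m=1$. For the first, I would build the cylinder map $F\colon(V\times I,\overline{P}_p\times I)\to(V,\overline{P}_p)$ by setting $F(w)=\psi\varphi(w)$ on the bottom copy, $F(w')=w$ on the top copy, and $F(i_0\ldots i_l i_l'\ldots i_n')=\psi\varphi(i_0)\ldots\psi\varphi(i_l)\,i_l\ldots i_n$ on the mixed generators, that is $f_0=\psi\circ\varphi$ before the seam and $f_1=\operatorname{id}$ after it. Completeness again guarantees that each image is regular-or-non-regular, so $F$ is a weak morphism realizing $\psi\varphi\sim_1\operatorname{id}_P$; the symmetric construction handles $\varphi\psi\sim_1\operatorname{id}_S$.

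With these inputs I would invoke the preceding theorem with $m=1$, $f_0=\psi\circ\varphi$, $f_1=\operatorname{id}_P$, $g_0=\varphi\circ\psi$, $g_1=\operatorname{id}_S$. The outer maximum over $k=1,\ldots,m$ then collapses to its single $k=1$ term, and since the identity preserves lengths we have $\operatorname{dis}_{p-l}(\operatorname{id}_P)=\operatorname{dis}_{p-l}(\operatorname{id}_S)=0$, so the inner sums reduce to $\operatorname{dis}_l(\psi\circ\varphi)$ and $\operatorname{dis}_l(\varphi\circ\psi)$. This is precisely the displayed bound for the chosen pair $(\varphi,\psi)$. Because the argument applies verbatim to every pair of vertex maps, the estimate holds for each admissible choice, so one optimizes the right-hand side over all $(\varphi,\psi)$ to conclude.

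The step I expect to be the real obstacle is the second one: checking that the cylinder map $F$ is a genuine weak morphism of path complexes and not merely a map of vertex sets. One must confirm that the seam edge $\psi\varphi(i_l)\to i_l$, together with the concatenations it creates across the mixed generators, lands on regular-or-non-regular paths; it is here that completeness of the target is indispensable and that the term $\operatorname{cod}(\psi\circ\varphi,\operatorname{id}_P)$ appears, bounding the length contributed by that single seam edge. Everything downstream, namely the interleaving of the homology persistence modules and the passage to the bottleneck distance via Theorem \ref{stab}, is inherited unchanged from the general theorem.
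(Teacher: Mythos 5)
Your proposal is correct and follows essentially the same route as the paper, which justifies the corollary in one line by observing that completeness makes every vertex map a weak morphism with $\psi\circ\varphi\sim_1\operatorname{id}_P$ and $\varphi\circ\psi\sim_1\operatorname{id}_S$, and then specializes the preceding theorem to $m=1$ (so $\operatorname{dis}_{p-l}(\operatorname{id})=0$ and the dangling $\operatorname{cod}(f_{k-1},f_k)$ terms read as $\operatorname{cod}(\psi\circ\varphi,\operatorname{id}_P)$ and $\operatorname{cod}(\varphi\circ\psi,\operatorname{id}_S)$, exactly as you interpret them). Your explicit cylinder construction and the seam-edge verification merely spell out what the paper leaves implicit, so the approaches coincide.
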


\bibliography{stabPPH}

\begin{thebibliography}{34}
\providecommand{\natexlab}[1]{#1}
\providecommand{\url}[1]{\texttt{#1}}
\expandafter\ifx\csname urlstyle\endcsname\relax
  \providecommand{\doi}[1]{doi: #1}\else
  \providecommand{\doi}{doi: \begingroup \urlstyle{rm}\Url}\fi

\bibitem[Boccaletti et~al.(2023)Boccaletti, De~Lellis, Del~Genio,
  Alfaro-Bittner, Criado, Jalan, and Romance]{boccaletti2023structure}
Stefano Boccaletti, Pietro De~Lellis, CI~Del~Genio, Karin Alfaro-Bittner,
  Regino Criado, Sarika Jalan, and Miguel Romance.
\newblock The structure and dynamics of networks with higher order
  interactions.
\newblock \emph{Physics Reports}, 1018:\penalty0 1--64, 2023.

\bibitem[Levine et~al.(2017)Levine, Bascompte, Adler, and
  Allesina]{levine2017beyond}
Jonathan~M Levine, Jordi Bascompte, Peter~B Adler, and Stefano Allesina.
\newblock Beyond pairwise mechanisms of species coexistence in complex
  communities.
\newblock \emph{Nature}, 546\penalty0 (7656):\penalty0 56--64, 2017.

\bibitem[Ganmor et~al.(2011)Ganmor, Segev, and Schneidman]{ganmor2011sparse}
Elad Ganmor, Ronen Segev, and Elad Schneidman.
\newblock Sparse low-order interaction network underlies a highly correlated
  and learnable neural population code.
\newblock \emph{Proceedings of the National Academy of sciences}, 108\penalty0
  (23):\penalty0 9679--9684, 2011.

\bibitem[Benson et~al.(2018)Benson, Abebe, Schaub, Jadbabaie, and
  Kleinberg]{benson2018simplicial}
Austin~R Benson, Rediet Abebe, Michael~T Schaub, Ali Jadbabaie, and Jon
  Kleinberg.
\newblock Simplicial closure and higher-order link prediction.
\newblock \emph{Proceedings of the National Academy of Sciences}, 115\penalty0
  (48):\penalty0 E11221--E11230, 2018.

\bibitem[Montes~de Oca et~al.(2011)Montes~de Oca, Ferrante, Scheidler,
  Pinciroli, Birattari, and Dorigo]{montes2011majority}
Marco~A Montes~de Oca, Eliseo Ferrante, Alexander Scheidler, Carlo Pinciroli,
  Mauro Birattari, and Marco Dorigo.
\newblock Majority-rule opinion dynamics with differential latency: a mechanism
  for self-organized collective decision-making.
\newblock \emph{Swarm Intelligence}, 5:\penalty0 305--327, 2011.

\bibitem[Szolnoki and Perc(2010)]{szolnoki2010reward}
Attila Szolnoki and Matjaz Perc.
\newblock Reward and cooperation in the spatial public goods game.
\newblock \emph{Europhysics Letters}, 92\penalty0 (3):\penalty0 38003, 2010.

\bibitem[Centola(2010)]{centola2010spread}
Damon Centola.
\newblock The spread of behavior in an online social network experiment.
\newblock \emph{science}, 329\penalty0 (5996):\penalty0 1194--1197, 2010.

\bibitem[Battiston et~al.(2020)Battiston, Cencetti, Iacopini, Latora, Lucas,
  Patania, Young, and Petri]{battiston2020networks}
Federico Battiston, Giulia Cencetti, Iacopo Iacopini, Vito Latora, Maxime
  Lucas, Alice Patania, Jean-Gabriel Young, and Giovanni Petri.
\newblock Networks beyond pairwise interactions: Structure and dynamics.
\newblock \emph{Physics Reports}, 874:\penalty0 1--92, 2020.

\bibitem[De~Silva and Carlsson(2004)]{de2004topological}
Vin De~Silva and Gunnar~E Carlsson.
\newblock Topological estimation using witness complexes.
\newblock In \emph{PBG}, pages 157--166, 2004.

\bibitem[Vietoris(1927)]{vietoris1927hoheren}
Leopold Vietoris.
\newblock {\"U}ber den h{\"o}heren zusammenhang kompakter r{\"a}ume und eine
  klasse von zusammenhangstreuen abbildungen.
\newblock \emph{Mathematische Annalen}, 97\penalty0 (1):\penalty0 454--472,
  1927.

\bibitem[{\v{C}}ech et~al.(1959){\v{C}}ech, Nov{\'a}k, and
  Kat{\v{e}}tov]{vcech1959topologicke}
Eduard {\v{C}}ech, Josef Nov{\'a}k, and Miroslav Kat{\v{e}}tov.
\newblock Topologick{\'e} prostory. s dodatky.
\newblock 1959.

\bibitem[Edelsbrunner et~al.(2002)Edelsbrunner, Letscher, and
  Zomorodian]{edelsbrunner2002topological}
Edelsbrunner, Letscher, and Zomorodian.
\newblock Topological persistence and simplification.
\newblock \emph{Discrete \& computational geometry}, 28:\penalty0 511--533,
  2002.

\bibitem[Edelsbrunner et~al.(2008)Edelsbrunner, Harer,
  et~al.]{edelsbrunner2008persistent}
Herbert Edelsbrunner, John Harer, et~al.
\newblock Persistent homology-a survey.
\newblock \emph{Contemporary mathematics}, 453\penalty0 (26):\penalty0
  257--282, 2008.

\bibitem[Chazal et~al.(2014)Chazal, De~Silva, and Oudot]{chazal2014persistence}
Fr{\'e}d{\'e}ric Chazal, Vin De~Silva, and Steve Oudot.
\newblock Persistence stability for geometric complexes.
\newblock \emph{Geometriae Dedicata}, 173\penalty0 (1):\penalty0 193--214,
  2014.

\bibitem[Otter et~al.(2017)Otter, Porter, Tillmann, Grindrod, and
  Harrington]{otter2017roadmap}
Nina Otter, Mason~A Porter, Ulrike Tillmann, Peter Grindrod, and Heather~A
  Harrington.
\newblock A roadmap for the computation of persistent homology.
\newblock \emph{EPJ Data Science}, 6:\penalty0 1--38, 2017.

\bibitem[Chowdhury and M{\'e}moli(2016)]{chowdhury2016persistent}
Samir Chowdhury and Facundo M{\'e}moli.
\newblock Persistent homology of asymmetric networks: An approach based on
  dowker filtrations.
\newblock \emph{arXiv preprint arXiv:1608.05432}, 2016.

\bibitem[Hochschild(1945)]{hochschild1945cohomology}
Gerhard Hochschild.
\newblock On the cohomology groups of an associative algebra.
\newblock \emph{Ann. of Math.(2)}, 46\penalty0 (1):\penalty0 58--67, 1945.

\bibitem[Grigor'yan et~al.(2012)Grigor'yan, Lin, Muranov, and
  Yau]{grigor2012homologies}
Alexander Grigor'yan, Yong Lin, Yuri Muranov, and Shing-Tung Yau.
\newblock Homologies of path complexes and digraphs.
\newblock \emph{arXiv preprint arXiv:1207.2834}, 2012.

\bibitem[Happel(2006)]{happel2006hochschild}
Dieter Happel.
\newblock Hochschild cohomology of finite—dimensional algebras.
\newblock In \emph{S{\'e}minaire d’Alg{\`e}bre Paul Dubreil et Marie-Paul
  Malliavin: Proceedings, Paris 1987--1988 (39{\`e}me Ann{\'e}e)}, pages
  108--126. Springer, 2006.

\bibitem[Grigor’yan et~al.(2015)Grigor’yan, Lin, Muranov, and
  Yau]{grigor2015cohomology}
Alexander Grigor’yan, Yong Lin, Yuri Muranov, and Shing-Tung Yau.
\newblock Cohomology of digraphs and (undirected) graphs.
\newblock \emph{Asian Journal of Mathematics}, 19\penalty0 (5):\penalty0
  887--932, 2015.

\bibitem[Grigor'yan et~al.(2019)Grigor'yan, Jimenez, Muranov, and
  Yau]{grigor2019homology}
Alexander Grigor'yan, Rolando Jimenez, Yuri Muranov, and Shing-Tung Yau.
\newblock Homology of path complexes and hypergraphs.
\newblock \emph{Topology and its Applications}, 267:\penalty0 106877, 2019.

\bibitem[Grigor’yan et~al.(2024)Grigor’yan, Lin, and
  Yau]{grigor2024analytic}
Alexander Grigor’yan, Yong Lin, and Shing-Tung Yau.
\newblock Analytic and reidemeister torsions of digraphs and path complexes.
\newblock \emph{Pure and Applied Mathematics Quarterly}, 20\penalty0
  (2):\penalty0 703--755, 2024.

\bibitem[Grigor'yan et~al.(2014{\natexlab{a}})Grigor'yan, Muranov, and
  Yau]{grigor2014graphs}
Alexander Grigor'yan, Yu~V Muranov, and Shing-Tung Yau.
\newblock Graphs associated with simplicial complexes.
\newblock 2014{\natexlab{a}}.

\bibitem[Grigor’yan et~al.(2020)Grigor’yan, Lin, Muranov, and
  Yau]{grigor2020path}
Alexander~A Grigor’yan, Yong Lin, Yu~V Muranov, and Shing-Tung Yau.
\newblock Path complexes and their homologies.
\newblock \emph{Journal of Mathematical Sciences}, 248:\penalty0 564--599,
  2020.

\bibitem[Grigor'yan et~al.(2014{\natexlab{b}})Grigor'yan, Lin, Muranov, and
  Yau]{grigor2014homotopy}
Alexander Grigor'yan, Yong Lin, Yuri Muranov, and Shing-Tung Yau.
\newblock Homotopy theory for digraphs.
\newblock \emph{arXiv preprint arXiv:1407.0234}, 2014{\natexlab{b}}.

\bibitem[Chaplin et~al.(2022)Chaplin, Harrington, and
  Tillmann]{chaplin2022grounded}
Thomas Chaplin, Heather~A Harrington, and Ulrike Tillmann.
\newblock Grounded persistent path homology: a stable, topological descriptor
  for weighted digraphs.
\newblock \emph{arXiv preprint arXiv:2210.11274}, 2022.

\bibitem[Chowdhury and M{\'e}moli(2018)]{chowdhury2018persistent}
Samir Chowdhury and Facundo M{\'e}moli.
\newblock Persistent path homology of directed networks.
\newblock In \emph{Proceedings of the Twenty-Ninth Annual ACM-SIAM Symposium on
  Discrete Algorithms}, pages 1152--1169. SIAM, 2018.

\bibitem[Chazal et~al.(2016)Chazal, De~Silva, Glisse, and
  Oudot]{chazal2016structure}
Fr{\'e}d{\'e}ric Chazal, Vin De~Silva, Marc Glisse, and Steve Oudot.
\newblock \emph{The structure and stability of persistence modules}, volume~10.
\newblock Springer, 2016.

\bibitem[Chazal et~al.(2009)Chazal, Cohen-Steiner, Glisse, Guibas, and
  Oudot]{chazal2009proximity}
Fr{\'e}d{\'e}ric Chazal, David Cohen-Steiner, Marc Glisse, Leonidas~J Guibas,
  and Steve~Y Oudot.
\newblock Proximity of persistence modules and their diagrams.
\newblock In \emph{Proceedings of the twenty-fifth annual symposium on
  Computational geometry}, pages 237--246, 2009.

\bibitem[Bauer and Lesnick(2014)]{bauer2014induced}
Ulrich Bauer and Michael Lesnick.
\newblock Induced matchings of barcodes and the algebraic stability of
  persistence.
\newblock In \emph{Proceedings of the thirtieth annual symposium on
  Computational geometry}, pages 355--364, 2014.

\bibitem[Lin et~al.(2019)Lin, Ren, Wang, and Wu]{lin2019weighted}
Yong Lin, Shiquan Ren, Chong Wang, and Jie Wu.
\newblock Weighted path homology of weighted digraphs and persistence.
\newblock \emph{arXiv preprint arXiv:1910.09891}, 2019.

\bibitem[Carlsson(2009)]{carlsson2009topology}
Gunnar Carlsson.
\newblock Topology and data.
\newblock \emph{Bulletin of the American Mathematical Society}, 46\penalty0
  (2):\penalty0 255--308, 2009.

\bibitem[Hatcher(2005)]{hatcher2005algebraic}
Allen Hatcher.
\newblock \emph{Algebraic topology}.
\newblock Tsinghua University Press Co., Ltd., 2005.

\bibitem[Botnan()]{botnantopological}
Magnus~Bakke Botnan.
\newblock Topological data analysis spring 2020.

\end{thebibliography}

\end{document}